\documentclass[arxiv]{agn_article}
\usepackage[french,english]{babel}
\usepackage{csquotes}

\usepackage{amssymb,amsmath,amsthm,agn_bib}

\usepackage{agn_macros,agn_authors}
\usepackage{mathrsfs}
\usepackage{graphicx}
\usepackage{arydshln}
\usepackage{multicol,enumitem}
\usepackage[normalem]{ulem}


\newcounter{alphasect}
\def\alphainsection{0}

\let\oldsection=\section
\def\section{%
  \ifnum\alphainsection=1%
    \addtocounter{alphasect}{1}
  \fi%
\oldsection}%

\renewcommand\thesection{%
  \ifnum\alphainsection=1%
    \Alph{alphasect}%
  \else%
    \arabic{section}%
  \fi%
}%

\newenvironment{alphasection}{%
  \ifnum\alphainsection=1%
    \errhelp={Let other blocks end at the beginning of the next block.}
    \errmessage{Nested Alpha section not allowed}
  \fi%
  \setcounter{alphasect}{0}
  \def\alphainsection{1}
}{%
  \setcounter{alphasect}{0}
  \def\alphainsection{0}
}%

\addbibresource{bib_agn.bib}
\addbibresource{bib_main.bib}
\AtBeginBibliography{\footnotesize}


\newtheorem{theorem}{Theorem}[section]

\newtheorem{lemma}[theorem]{Lemma}
\newtheorem{observation}[theorem]{Observation}
\newtheorem{corollary}[theorem]{Corollary}

\theoremstyle{definition}

\newtheorem{remark}[theorem]{Remark}

\usepackage{xcolor,hyperref}
\definecolor{darkblue}{rgb}{0,0,0.6}
\hypersetup{
    colorlinks=true,       
    linkcolor=darkblue,          
    citecolor=darkblue,        
    filecolor=darkblue,      
    urlcolor=darkblue           
}

\usepackage[colorinlistoftodos,prependcaption,textsize=tiny]{todonotes}
\newcommand{\skalarProd}[2]{\big\langle#1,#2\big\rangle}

\newcommand{\inc}{\boldsymbol{\operatorname{inc}}\,}
\renewcommand{\D}{\operatorname{D}\hspace{-1pt}}
\newcommand{\curl}{\operatorname{curl}}
\newcommand{\Anti}{\operatorname{Anti}}

\newlist{thmenum}{enumerate}{1}
\setlist[thmenum]{label=\upshape(\alph*)}

\usepackage{tikz}

\begin{document}
\begin{tikzpicture}[remember picture, overlay]
 \node [xshift=-1cm,yshift=15cm,rotate=-90] at (current page.south east)
 {Proceedings of the Royal Society of Edinburgh (2021), doi: \href{https://www.doi.org/10.1017/prm.2021.62}{10.1017/prm.2021.62}.
 };
\end{tikzpicture}
\numberwithin{equation}{section}

\numberwithin{equation}{section}
\title{$L^p$-trace-free generalized Korn inequalities for incompatible tensor fields in three space dimensions}
\knownauthors[lewintan]{lewintan,neff}

\maketitle

\begin{center}\textit{In memoriam of Sergio Dain [1970-2016],\\ who gave the first proof of the trace-free Korn's inequality\\ on bounded Lipschitz domains.}\end{center}

\begin{abstract}
For $1<p<\infty$ we prove an $L^p$-version of the generalized trace-free Korn  inequality for incompatible tensor fields $P$ in $ W^{1,\,p}_0(\Curl; \Omega,\R^{3\times3})$. More precisely, let $\Omega\subset\R^3$ be a bounded Lipschitz domain. Then there exists a constant $c>0$ such that
\[
\norm{ P }_{L^p(\Omega,\R^{3\times3})}\leq c\,\left(\norm{\dev \sym P }_{L^p(\Omega,\R^{3\times3})} + \norm{ \dev \Curl P }_{L^p(\Omega,\R^{3\times3})}\right)
\]
holds for all tensor fields $P\in  W^{1,\,p}_0(\Curl; \Omega,\R^{3\times3})$, i.e., for all $P\in W^{1,\,p}(\Curl; \Omega,\R^{3\times3})$ with vanishing tangential trace $ P\times \nu=0 $ on $ \partial\Omega$
where  $\nu$ denotes the outward unit normal vector field to $\partial\Omega$ and  $\dev P \coloneqq P -\frac13 \tr(P) {\cdot}\id$ denotes the deviatoric (trace-free) part of $P$. We also show the norm equivalence{
\begin{align*}
\norm{ P }_{L^p(\Omega,\R^{3\times3})}+&\norm{ \Curl P }_{L^p(\Omega,\R^{3\times3})}\leq c\,\left(\norm{P}_{L^p(\Omega,\R^{3\times3})} + \norm{ \dev \Curl P }_{L^p(\Omega,\R^{3\times3})}\right)
\end{align*}
for tensor fields $P\in  W^{1,\,p}(\Curl; \Omega,\R^{3\times3})$.} These estimates also hold true for tensor fields with vanishing  tangential trace only on a relatively open (non-empty) subset $\Gamma \subseteq \partial\Omega$ of the boundary.
\end{abstract}


\msc{Primary: 35A23; Secondary: 35B45, 35Q74, 46E35.}\medskip

\keywords{$W^{1,\,p}(\Curl)$-Korn's inequality, Poincar\'{e}'s inequality, Lions lemma, Ne\v{c}as estimate, incompatibility, $\Curl$-spaces, Maxwell problems, gradient plasticity, dislocation density, relaxed micromorphic model, Cosserat elasticity, Kr\"{o}ner's incompatibility tensor, Saint-Venant compatibility, trace-free Korn's inequality, conformal mappings, conformal Killing vector field, Nye's formula.}

\section{Introduction}
Korn-type inequalities are crucial for a priori estimates in linear elasticity and fluid mechanics. They allow to bound the $L^p$-norm of the gradient $\D u$ in terms of the symmetric gradient, i.e. Korn's first inequality states
\begin{equation}\label{eq:Korn1}
 \exists\, c > 0 \ \forall\, u\in W^{1,\,p}_{0}(\Omega,\R^n): \qquad \norm{\D u}_{L^p(\Omega,\R^{n\times n})}\leq c\, \norm{\sym \D u}_{L^{p}(\Omega,\R^{n\times n})}.
\end{equation}
Generalizations to many different settings have been obtained in the literature, including the geometrically nonlinear counterpart
\cite{friesecke2002rigidity,LM2016optimalconstants,Faraco2005nonlinear}, mixed growth conditions \cite{CDM2014mixedgrowth}, incompatible  fields (also with dislocations) \cite{MSZ2014incompatible,agn_neff2015poincare,agn_neff2012canonical,agn_neff2011canonical,agn_neff2012maxwell,agn_bauer2013dev,agn_lewintan2019KornLp,agn_lewintan2019KornLpN,agn_lewintan2020KornLpN_tracefree,agn_lewintan2020generalKorn}, as well as the case of non-constant coefficients \cite{agn_neff2002korn,agn_lankeit2013uniqueness,agn_neff2014counterexamples,Pompe2003Korn} and on Riemannian manifolds \cite{ChenJost2002KornRiemann}. In this paper we focus on their improvement towards the trace-free case:
\begin{equation}\label{eq:trace-freeKorn1}
 \exists\, c > 0 \ \forall\, u\in W^{1,\,p}_{0}(\Omega,\R^n): \qquad \norm{\D u}_{L^p(\Omega,\R^{n\times n})}\leq c\, \norm{\dev_n\sym \D u}_{L^{p}(\Omega,\R^{n\times n})},
\end{equation}
where $\dev_n X\coloneqq X -\frac1n\tr(X)\cdot \id$ denotes the deviatoric (trace-free) part of the square matrix $X$. Note in passing that \eqref{eq:trace-freeKorn1} implies \eqref{eq:Korn1}.

There exist many different proofs and generalizations of the trace-free classical Korn's inequality in the literature, see \cite[Theorem 2]{Reshetnyak1970} but also \cite{Reshetnyak1994,Dain2006tracefree,FuchsSchirra2009tracefree2D,Schirra2012tracefreenD,agn_jeong2008existence,agn_bauer2013dev} as well as \cite{Wang2008tracefreeinpseudo} for trace-free Korn's inequalities in pseudo-Euclidean space and \cite{Dain2006tracefree, holst2007rough} for trace-free Korn inequalities on manifolds, \cite{BCD2017Orlicz,Fuchs2010tracefreeKorn} for trace-free Korn inequalities in Orlicz spaces and \cite{LopezGarcia2018tracefreeKorn,DingBo2020tracefreeKorn} for weighted trace-free Korn inequalities in H\"older and John domains. Such coercive inequalities found application in micro-polar Cosserat-type models \cite{agn_jeong2009numerical,Neff_Jeong_IJSS09,agn_jeong2008existence,FuchsSchirra2009tracefree2D} and general relativity \cite{Dain2006tracefree}. On the other hand, corresponding trace-free coercive inequalities for incompatible tensor fields are useful in infinitesimal gradient plasticity as well as in linear relaxed micromorphic elasticity, see \cite{agn_neff2015relaxed,agn_ghiba2017variant} but also \cite[sec. 7]{agn_bauer2013dev} and the references contained therein.

Notably, in case $n=2$, the condition $\dev_2\sym\D u\equiv0$ becomes the system of Cauchy-Riemann equations, so that the corresponding kernel is infinite-dimensional and an adequate quantitative version of the trace-free classical Korn's inequality does not hold true. Nevertheless, in \cite{FuchsSchirra2009tracefree2D} it is proved that
\begin{equation}
 \norm{\D u}_{L^p(\Omega,\R^{2\times2})}\le c\,\norm{\dev_2\sym\D u}_{L^p(\Omega,\R^{2\times2})}
\end{equation}
holds for each $u\in W^{1,\,p}_0(\Omega,\R^2)$,\footnote{A simple proof using partial integration is given in the Appendix for the case $p=2$ and all dimensions.} but, again, this result ceases to be valid if the Dirichlet conditions are prescribed only on a part of the boundary, cf.~the counterexample in \cite[sec. 6.6]{agn_bauer2013dev}.

Korn-type inequalities fail for the limiting cases $p=1$ and $p=\infty$. Indeed, from the counterexamples traced back in \cite{CFM2005counterex,dlM1964counterex,Ornstein1962,Mityagin1958} it follows that  $\int_\Omega\abs{\sym\D u}\intd{x}$ does not dominate each quantity $\int_\Omega \abs{\partial_i u_j}\intd{x}$ for any vector field $u\in W^{1,\,1}_0(\Omega,\R^n)$. Hence, also trace-free versions fail for $p=1$ and $p=\infty$. On the other hand, Poincar\'{e}-type inequalities estimating certain integral norms of the deformation $u$ in terms of the total variation of the symmetric strain tensor $\sym\D u$ are still valid. In particular, for Poincar\'{e}-type inequalities for functions of bounded deformation involving the deviatoric part of the symmetric gradient we refer to \cite{FuchsRepin2010Poincaretracefree}.\medskip

The classical Korn's inequalities need compatibility, i.e. a gradient $\D u$; giving up the compatibility necessitates controlling the distance of $P$ to a gradient by adding the incompatibility measure (the dislocation density tensor) $\Curl P$.  We showed in \cite{agn_lewintan2019KornLp} the following quantitative version of Korn's inequality for incompatible tensor fields $ P\in  W^{1,\,p}(\Curl; \Omega,\R^{3\times3})$:
\begin{align}
   \inf_{\widetilde{A}\in\so(3)}\norm{P-\widetilde{A}}_{L^p(\Omega,\R^{3\times3})}&\leq c\,\left(\norm{ \sym P }_{L^p(\Omega,\R^{3\times3})}+ \norm{ \Curl P }_{L^p(\Omega,\R^{3\times3})}\right).\label{eq:Korn_Lp_w} \\
\intertext{Note that the constant skew-symmetric matrix fields (restricted to $\Omega$) represent the elements from the kernel of the right-hand side of \eqref{eq:Korn_Lp_w}.  For compatible $P=\D u$ recover from \eqref{eq:Korn_Lp_w} the quantitative version of the classical Korn's inequality, namely for $u\in W^{1,\,p}(\Omega,\R^3)$:}
       \inf_{\widetilde{A}\in\so(3)}\norm{\D u- \widetilde{A}}_{L^p(\Omega,\R^{3\times 3})}&\leq c\, \norm{\sym \D u}_{L^{p}(\Omega,\R^{3\times 3})}\label{eq:KornQuant}\\
\intertext{and for skew-symmetric matrix fields $P=A\in\so(3)$ the corresponding Poincar\'{e} inequality for squared skew-symmetric matrix fields $A\in W^{1,\,p}(\Omega,\so(3))$ (and thus for vectors in $\R^3$):}
    \inf_{\widetilde{A}\in\so(3)}\norm{A-\widetilde{A} }_{L^p(\Omega,\R^{3\times 3})}&\leq c\, \norm{\Curl A}_{L^{p}(\Omega,\R^{3\times 3})} \le \tilde{c}\, \norm{\D A}_{L^{p}(\Omega,\R^{ {3\times3^2}})}, \label{eq:PoincareQuant}
\end{align}
where in the last step we have used that $\Curl$ consists of linear combinations from $\D$. Interestingly, for skew-symmetric $A$ also the converse is true, more precisely, the entries of $\D A$ are linear combinations of the entries from $\Curl A$, cf. e.g. \cite[Cor. 2.3]{agn_lewintan2019KornLp}:
\begin{equation}\label{eq:lincombi}
 \D A = L(\Curl A) \qquad \text{for skew-symmetric } A,
\end{equation}
 {where $L(.)$ denotes a corresponding linear operator with constant coefficients, not necessarily the same in any two places in the present paper.}
In fact, the mentioned results also hold in higher dimensions $n>3$, see \cite{agn_lewintan2019KornLpN} and the discussion contained therein. In our proof of \eqref{eq:Korn_Lp_w} we were highly inspired by a proof of \eqref{eq:KornQuant} advocated by P. G. Ciarlet and his collaborators \cite{Ciarlet2010, Ciarlet2013FAbook, Ciarlet2005korn, CMM2018,Geymonat86, DuvautLions72,Ciarlet2005intro}, which uses the Lions lemma resp.\ Ne\v{c}as estimate, the compact embedding $W^{1,\,p}\subset\!\subset L^p$ and the  representation of the second distributional derivatives of  the displacement $u$ by a linear combination of the first derivatives of the symmetrized gradient $\D u$:
\begin{equation}\label{eq:lincombi_classical}
 \D^2 u = L(\D\, \sym \D u).
 \end{equation}
It is worth mentioning that the role of the latter ingredient \eqref{eq:lincombi_classical} was taken over by \eqref{eq:lincombi} in our proof of \eqref{eq:Korn_Lp_w} in \cite{agn_lewintan2019KornLp} resp.\ \cite{agn_lewintan2019KornLpN}. In $n=3$ dimensions the relation \eqref{eq:lincombi} is an easy consequence of the so called \textit{Nye's formula}  \cite[eq.\!\! (7)]{Nye53}:
\begin{subequations}\label{eq:Nye}
\begin{align}
 \Curl A &= \tr(\D \axl A)\cdot\id- (\D \axl A)^T,\label{eq:Nye_a}\\
\shortintertext{resp.} 
\D \axl A &= \frac12 \,\tr(\Curl A)\cdot\id - (\Curl A)^T,
\end{align}
\end{subequations}
where we identify the vectorspace of skew-symmetric matrices  $\so(3)$ and $\R^3$ via ~ $\axl:\so(3)\to\R^3$ which is defined by the cross product:
\begin{equation}\label{eq:axlDef}
 A\, b \eqqcolon \axl(A)\times b \qquad \forall\, b\in\R^3,
\end{equation}
and associates with a skew-symmetric  matrix $A\in\so(3)$ the vector ~ $\axl A\coloneqq (-A_{23},A_{13},-A_{12})^T$. The relation \eqref{eq:Nye_a} admits moreover a counterpart on the group of orthogonal matrices $\operatorname{O}(3)$ and even in higher spatial dimensions, see \cite{agn_munch2008curl}. In fact, Nye's formula is (formally) a consequence of the following algebraic identity:
\begin{equation}\label{eq:prod_id-intro}
(\Anti a)\times b = b \otimes a -\skalarProd{b}{a} {\cdot} \id \qquad \forall\,  a,b\in\R^3,
\end{equation}
where the vector product of a matrix and a vector is to be seen row-wise and  ~ $\Anti: \R^3\to\so(3)$ is the inverse of ~ $\axl$. Despite the absence of the simple algebraic relations in the higher dimensional case a corresponding relation to \eqref{eq:lincombi} also holds true in $n>3$, see e.g. \cite{agn_lewintan2019KornLpN}.

Moreover, the kernel in quantitative versions of Korn's inequalities is killed by corresponding boundary conditions, namely by a vanishing trace condition $u_{|_{\partial\Omega}}=0$ in the case of \eqref{eq:KornQuant} and \eqref{eq:PoincareQuant} and by a vanishing tangential trace condition $P\times \nu~_{|_{\partial\Omega}}=0$ in the general case \eqref{eq:Korn_Lp_w}, cf. \cite{agn_lewintan2019KornLp,agn_lewintan2019KornLpN}.\medskip

The objective of the present paper is to improve on inequality \eqref{eq:Korn_Lp_w} by showing that it already suffices to consider the deviatoric (trace-free) parts on the right-hand side, hence, further contributing to the problems proposed in \cite{agn_neff2015poincare}. More precisely, the main results are
 {
\begin{theorem}
 Let $\Omega\subset\R^3$ be a bounded Lipschitz domain and $1<p<\infty$. There exists a constant $c=c(p,\Omega)>0$ such that for all $P\in L^p(\Omega,\R^{3\times3})$ we have
 \begin{align}
   \inf_{T\in K_{dS,dC}}&\norm{P-T}_{L^p(\Omega,\R^{3\times3})}\leq c\,\left(\norm{\dev \sym P }_{L^p(\Omega,\R^{3\times3})}+ \norm{\dev\Curl P }_{W^{-1,\,p}(\Omega,\R^{3\times3})}\right)\,,\label{eq:Korn_Lp_dev_quant_all}
 \end{align}
where  ~ $\dev X \coloneqq X -\frac13 \tr(X) {\cdot}\id$ ~ denotes the deviatoric part of a square tensor $X\in\R^{3\times 3}$ ~ and ~ $K_{dS,dC}$ ~ represent the kernel of the right-hand side and is given by
\begin{align}
     K_{dS,dC} &= \{T:\Omega\to\R^{3\times3}\mid  T(x)=\Anti\big(\widetilde{A}\,x+\beta\, x+b \big)+\big(\skalarProd{\axl\widetilde{A}}{x}+\gamma \big) {\cdot}\id, \notag \\ &\hspace{10em} \widetilde{A}\in\so(3), b\in\R^3, \beta,\gamma\in\R\}.\label{eq:kernel_dSdC}
 \end{align}
\end{theorem}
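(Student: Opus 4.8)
The plan is to combine the pointwise splitting
\[
 P \;=\; \Anti v \;+\; \dev\sym P \;+\; \phi\cdot\id, \qquad v\coloneqq \axl\operatorname{skew}P\in L^p(\Omega,\R^3),\quad \phi\coloneqq \tfrac13\tr P\in L^p(\Omega),
\]
with Nye's formula \eqref{eq:Nye} in order to reduce the incompatible trace-free estimate to a (classical) trace-free Korn inequality for the vector field $v$ and a Poincar\'e inequality for the scalar field $\phi$. Applying $\Curl$ to this splitting, using $\Curl(\Anti v)=(\operatorname{div}v)\cdot\id-(\D v)^T$ (which is \eqref{eq:Nye_a}) together with the elementary identity $\Curl(\phi\cdot\id)=\pm\Anti\nabla\phi$, yields $\tr\Curl P=2\operatorname{div}v$ and an explicit formula for $\dev\Curl P$ whose symmetric and skew parts read, up to the sign conventions of \eqref{eq:Nye},
\[
 \dev\sym\D v \;=\; \sym\!\big(\Curl(\dev\sym P)-\dev\Curl P\big),\qquad \Anti\nabla\phi \;=\; \operatorname{skew}\D v \;-\; \operatorname{skew}\!\big(\Curl(\dev\sym P)-\dev\Curl P\big).
\]
Both right-hand sides are controlled in $W^{-1,\,p}$ by $\norm{\dev\sym P}_{L^p}+\norm{\dev\Curl P}_{W^{-1,\,p}}$, since $\Curl$ lowers regularity by one.

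The kernel is then read off immediately: if $\dev\sym P=\dev\Curl P=0$ then the first relation is the conformal Killing equation $\dev\sym\D v=0$, so by overdetermined ellipticity of that operator $v$ is a polynomial conformal Killing field, i.e.\ in three dimensions $v(x)=b+(\widetilde A+\beta\,\id)x+\big(2\skalarProd{c}{x}x-\abs{x}^2 c\big)$ with $b,c\in\R^3,\widetilde A\in\so(3),\beta\in\R$; the second relation forces $\nabla\phi=\tfrac12\curl v$, whose integrability condition $\curl\curl v=0$ annihilates the special conformal part ($c=0$) and then gives $\phi(x)=\skalarProd{\axl\widetilde A}{x}+\gamma$. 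This is exactly $K_{dS,dC}$, and the converse inclusion is clear.

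For the estimate itself I would run the Lions lemma / Ne\v{c}as estimate plus compactness scheme, but on $v$ and $\phi$ rather than directly on $P$. From the first relation and the $W^{-1,\,p}$-version of the classical trace-free Korn inequality applied to $v\in L^p(\Omega,\R^3)$ I obtain a conformal Killing field $v_0$ with $\norm{v-v_0}_{L^p}\le c\,\big(\norm{\dev\sym P}_{L^p}+\norm{\dev\Curl P}_{W^{-1,\,p}}\big)$. Writing $v=v_0+(v-v_0)$ in the second relation and applying $\curl$ eliminates $\nabla\phi$ and shows, via a $W^{-2,\,p}$-estimate, that the special conformal coefficient $c_0$ of $v_0$ is itself bounded by the same data, so that $v_0$ may be replaced by its affine part $v_0'(x)=\widetilde A_0 x+\beta_0 x+b_0$ without losing the bound. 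Inserting $v_0'$ back, the second relation shows that $\nabla\big(\phi-\skalarProd{\axl\widetilde A_0}{\cdot}\big)$ is controlled in $W^{-1,\,p}$, whence Poincar\'e's inequality produces $\gamma_0\in\R$ with $\norm{\phi-\skalarProd{\axl\widetilde A_0}{\cdot}-\gamma_0}_{L^p}\le c\,(\dots)$. Adding the three bounds with $T\coloneqq\Anti(v_0')+\big(\skalarProd{\axl\widetilde A_0}{\cdot}+\gamma_0\big)\cdot\id\in K_{dS,dC}$ yields \eqref{eq:Korn_Lp_dev_quant_all}.

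The main obstacle is precisely that $\Curl(\dev\sym P)$ is only a distribution, so the whole argument lives one Sobolev scale below the natural one: the trace-free Korn inequality for $v$ and the Poincar\'e inequality for $\phi$ must be used in their negative-order ($W^{-1,\,p}$) forms, and the extraction of $c_0$ costs a further derivative. I would therefore first establish these negative-order inequalities — either by a duality/mollification argument from their classical counterparts, or directly from the Lions lemma / Ne\v{c}as estimate (both valid on bounded Lipschitz domains for all $1<p<\infty$) together with the second-order trace-free Saint-Venant identity expressing $\D^3 v$ as a linear combination of $\D^2(\dev\sym\D v)$, the trace-free analogue of \eqref{eq:lincombi_classical} — removing the resulting lower-order terms by the usual Rellich (Peetre--Tartar) compactness argument, which simultaneously furnishes the closed-range property. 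The localization to a relatively open boundary part $\Gamma$ and the norm-equivalence variant stated in the abstract then follow by routine modifications of this scheme.
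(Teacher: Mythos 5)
Your plan is correct, and it takes a genuinely different route from the paper's. The paper keeps the orthogonal complement $W\coloneqq\skew P+\tfrac13\tr P\cdot\id=\Anti v+\phi\cdot\id$ as a single object: Lemma~\ref{lem:lin_combi}~\ref{lin_combi_trace_c} proves the differential identity $\D^3 W=L(\D^2\dev\Curl W)$, after which Corollary~\ref{cor:LionsNecas_k} and one Peetre--Tartar compactness step (Lemma~\ref{lem:basic2}~\ref{condition_c_dev} and Theorem~\ref{thm:main1tf}) deliver the estimate directly, with the eight-dimensional kernel $K_{dS,dC}$ built in via Lemma~\ref{lem:Kernel}~\ref{kernel_c}. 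You instead decouple $v=\axl\skew P$ and $\phi=\tfrac13\tr P$, reading off from Nye's formula that $\dev\sym\D v$ and $\Anti\nabla\phi-\skew\D v$ are controlled in $W^{-1,\,p}$ by the data (with the paper's sign conventions the second relation reads $\Anti\nabla\phi=\skew\D v+\skew\!\big(\Curl\dev\sym P-\dev\Curl P\big)$, the opposite sign from what you wrote, as you anticipated), and then you invoke a negative-order Dain--Schirra trace-free Korn inequality for $v$ together with a Ne\v{c}as estimate for $\phi$. The cost of decoupling shows up in the kernel bookkeeping: the trace-free Korn for $v$ alone carries the full ten-dimensional conformal Killing space, three more scalar parameters (the special conformal vector $c_0$) than survive in $K_{dS,dC}$, so you need the additional $\curl\curl$-integrability argument, at the price of one extra derivative, to bound $c_0$. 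The paper's combined identity avoids this entirely because the compatibility between $v$ and $\phi$ is already encoded in the single operator $\dev\Curl(\Anti(\cdot)+(\cdot)\,\id)$, so the eight-dimensional kernel emerges in one pass. Your route has the merit of exposing how the classical Dain--Schirra trace-free Korn for vector fields enters and how the trace part $\phi$ is slaved to the rotation part of $v$; the paper's is more economical. Note also that the negative-order Schirra inequality you propose to establish first is itself proved by exactly the Lions/Ne\v{c}as plus Peetre--Tartar scheme, via $\D^3 v=L(\D^2\dev\sym\D v)$, so the two proofs rest on the same hard-analysis input and differ only in how the linear algebra of the kernel is organized.
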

By killing the kernel with tangential trace conditions (note that $\dev(P\times \nu)=0$ iff $P\times\nu=0$) we arrive at the following Korn's first type inequality 
\begin{theorem}
 Let $\Omega \subset \R^3$ be a bounded Lipschitz domain and $1<p<\infty$. There exists a constant $c=c(p,\Omega)>0$ such that we have
 \begin{equation}
     \norm{ P }_{L^p(\Omega,\R^{3\times3})}\leq c\,\left(\norm{\dev \sym P }_{L^p(\Omega,\R^{3\times3})}+ \norm{ \dev\Curl P }_{L^p(\Omega,\R^{3\times3})}\right)\label{eq:Korn_Lp_thm_dSdC-intro}
 \end{equation}
 for all 
 \begin{align*}
  P\in\,&  W^{1,\,p}_0(\Curl; \Omega,\R^{3\times3})
  \coloneqq\{P\in L^p(\Omega,\R^{3\times3})\mid \Curl P\in L^p(\Omega,\R^{3\times3}), \ P\times\nu\equiv0 \text{ on }\partial\Omega\}.
 \end{align*}
\end{theorem}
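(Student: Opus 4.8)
The plan is to deduce \eqref{eq:Korn_Lp_thm_dSdC-intro} from the quotient estimate \eqref{eq:Korn_Lp_dev_quant_all} of the previous theorem by killing the finite-dimensional kernel $K_{dS,dC}$ with the tangential boundary condition, exactly in the spirit of the classical passage from a Korn second inequality to a Korn first inequality. First I would use the continuous embedding $L^p(\Omega)\hookrightarrow W^{-1,\,p}(\Omega)$ to upgrade \eqref{eq:Korn_Lp_dev_quant_all} to
\[
\inf_{T\in K_{dS,dC}}\norm{P-T}_{L^p(\Omega,\R^{3\times3})}\le c\,\big(\norm{\dev\sym P}_{L^p(\Omega,\R^{3\times3})}+\norm{\dev\Curl P}_{L^p(\Omega,\R^{3\times3})}\big)
\]
for every $P\in W^{1,\,p}(\Curl;\Omega,\R^{3\times3})$; it then remains only to absorb the $L^p$-distance to $K_{dS,dC}$ into the left-hand side of \eqref{eq:Korn_Lp_thm_dSdC-intro} once $P$ carries the vanishing tangential trace.

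The second ingredient is the purely algebraic observation that the boundary condition annihilates the kernel: I would show that if $T\in K_{dS,dC}$ and $T\times\nu=0$ on a relatively open non-empty $\Gamma\subseteq\partial\Omega$, then $T\equiv0$, i.e.\ $K_{dS,dC}\cap W^{1,\,p}_0(\Curl;\Omega,\R^{3\times3})=\{0\}$. By \eqref{eq:kernel_dSdC} every $T\in K_{dS,dC}$ is an affine tensor field, and near a point of $\Gamma$ at which $\partial\Omega$ is a Lipschitz graph the outward normal $\nu$ sweeps out a genuine two-parameter family of directions; imposing that every row of $x\mapsto T(x)$ be parallel to $\nu(x)$ along such a surface forces first the linear part and then the constant part of $T$ to vanish (the computation is especially transparent on a ball, where $\nu(x)=x/|x|$, or on a flat boundary piece, where $\nu$ is constant and one exploits that the multiple of the identity appearing in \eqref{eq:kernel_dSdC} cannot equal $\tfrac12(a\otimes\nu+\nu\otimes a)$ unless $a=0$).

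With these two facts in hand the estimate follows by a routine compactness argument of Peetre--Tartar type. Assume \eqref{eq:Korn_Lp_thm_dSdC-intro} fails; then there are $P_k\in W^{1,\,p}_0(\Curl;\Omega,\R^{3\times3})$ with $\norm{P_k}_{L^p(\Omega)}=1$ and $\norm{\dev\sym P_k}_{L^p(\Omega)}+\norm{\dev\Curl P_k}_{L^p(\Omega)}\to0$. By the first step the $L^p$-distance of $P_k$ to $K_{dS,dC}$ tends to $0$; since $K_{dS,dC}$ is finite-dimensional this distance is attained at some $T_k\in K_{dS,dC}$, and $\norm{T_k}_{L^p(\Omega)}\le\norm{P_k}_{L^p(\Omega)}+\norm{P_k-T_k}_{L^p(\Omega)}$ stays bounded, so after passing to a subsequence $T_k\to T$ in $K_{dS,dC}$ in every norm and $P_k=(P_k-T_k)+T_k\to T$ in $L^p(\Omega)$ with $\norm{T}_{L^p(\Omega)}=1$, whence $T\neq0$. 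On the other hand $\dev\Curl(P_k-T_k)=\dev\Curl P_k-\dev\Curl T_k\to0$ in $L^p(\Omega)$ (using $\dev\Curl T=0$ for $T\in K_{dS,dC}$), so the accompanying $W^{1,\,p}(\Curl)$-norm equivalence $\norm{\Curl Q}_{L^p(\Omega)}\le c\,(\norm{Q}_{L^p(\Omega)}+\norm{\dev\Curl Q}_{L^p(\Omega)})$ applied to $Q=P_k-T_k$ yields $\Curl(P_k-T_k)\to0$ in $L^p(\Omega)$, i.e.\ $P_k-T_k\to0$ in $W^{1,\,p}(\Curl;\Omega,\R^{3\times3})$. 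Since the tangential trace $Q\mapsto Q\times\nu$ is bounded on $W^{1,\,p}(\Curl;\Omega,\R^{3\times3})$ and $P_k\times\nu=0$, passing to the limit gives $T\times\nu=\lim_k T_k\times\nu=-\lim_k\big((P_k-T_k)\times\nu\big)=0$ on $\partial\Omega$ (and only on $\Gamma$ in the localized version), so $T\equiv0$ by the second step, contradicting $\norm{T}_{L^p(\Omega)}=1$.

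The genuinely substantial input is the previous theorem, whose proof rests on the Ne\v{c}as/Lions lemma together with the Nye-type reduction \eqref{eq:lincombi} and the compact embedding $W^{1,\,p}\subset\!\subset L^p$; granting it, the main new — though elementary — obstacle will be the kernel computation of the second step on a general Lipschitz rather than flat boundary piece, where one must be careful that the variation of $\nu$ along $\Gamma$ really is enough to pin down the affine field in $K_{dS,dC}$. The compactness argument of the third step is then standard, and the relative openness of $\Gamma\subseteq\partial\Omega$ costs nothing, since the second step is already established for such $\Gamma$.
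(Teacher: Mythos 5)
Your overall strategy -- contradiction, normalize, show the weak-star limit belongs to the kernel $K_{dS,dC}$, show its tangential trace vanishes, conclude it is zero -- matches the paper's, but you reach the vanishing tangential trace by a different route. You run the rigidity estimate \eqref{eq:Korn_Lp_dev_quant_all} plus the norm equivalence \eqref{eq:equivNorm} to obtain strong convergence $P_k - T_k\to 0$ in $W^{1,\,p}(\Curl;\Omega,\R^{3\times3})$, and then invoke continuity of the tangential trace operator on that space; the paper instead passes to the limit directly in the partial-integration identity \eqref{eq:partIntdev}, using only weak $L^p$-convergence of $P_k$ and strong $L^p$-convergence of $\dev\Curl P_k\to 0$, and obtains $\langle\dev(P^*\times\nu),Q\rangle_{\partial\Omega}=0$ without ever producing convergence of $\Curl P_k$. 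Your route is correct but imports more machinery (the quotient estimate, the norm equivalence, and boundedness of the trace) where the paper gets by with the weak formulation of the boundary trace alone; what you gain is a cleaner conceptual split into ``rigidity estimate kills the distance to the kernel'' and ``trace kills the kernel''.

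Where you underestimate the situation is the kernel-killing step. You phrase it as ``every row of $T(x)$ must be parallel to $\nu(x)$'' and flag as the ``main new obstacle'' that on a general Lipschitz piece one must check that the variation of $\nu$ pins down the affine field. But $T\in K_{dS,dC}$ is not an arbitrary affine tensor field: by \eqref{eq:kernel_dSdC} it has the very special pointwise structure $T(x)=\Anti(a(x))+\zeta(x)\cdot\id$ with $a,\zeta$ affine. Observation \ref{obs:zwei} then says that for \emph{each} $x$ with $\nu(x)\neq 0$, the single pointwise condition $(\Anti(a(x))+\zeta(x)\cdot\id)\times\nu(x)=0$ already forces $a(x)=0$ and $\zeta(x)=0$ -- no variation of $\nu$ is used at all, a constant normal works just as well. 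One is left with the affine functions $a$ and $\zeta$ vanishing a.e.\ on the two-dimensional set $\Gamma$, which kills all the coefficients $\widetilde A,\beta,b,\gamma$. So the obstacle you anticipate is not there: the argument is pointwise and uniform over Lipschitz boundaries, and the cases ``ball'' and ``flat patch'' you single out as tractable are in fact no easier than the general case. Incorporating Observation \ref{obs:zwei} here would make the second step of your proof as short as the paper's.
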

The appearance of the term $\dev\Curl P$ on the right hand side of \eqref{eq:Korn_Lp_thm_dSdC-intro} would suggest to consider $p$-integrable tensor fields $P$ with `only' $p-$integrable $\dev\Curl P$. However, this would not lead to a new Banach space, since we show that for all $m\in\Z$ it holds that
\begin{equation}
 \Curl P \in W^{m,\,p}(\Omega,\R^{3\times3}) \quad \Leftrightarrow\quad \dev\Curl P \in W^{m,\,p}(\Omega,\R^{3\times3}).
\end{equation}

The estimate \eqref{eq:Korn_Lp_thm_dSdC-intro} generalizes the corresponding result in \cite{agn_bauer2013dev} from the $L^2$-setting to the $L^p$-setting, whereas the trace-free second type inequality \eqref{eq:Korn_Lp_dev_quant_all} is completely new. Generalizations to different right hand sides and higher dimensions have been obtained in the recent papers \cite{agn_lewintan2020generalKorn,agn_lewintan2020KornLpN_tracefree}. Note however that the estimates \eqref{eq:Korn_Lp_dev_quant_all} and \eqref{eq:Korn_Lp_thm_dSdC-intro}  are restricted to the case of three dimensions since the deviatoric operator acts on square matrices and only in the three-dimensional setting the matrix Curl returns a square matrix.
}

Again, for compatible $P=\D u$ we get back a tangential trace-free classical Korn inequality for the displacement gradient, namely
\begin{equation}
 \norm{\D u}_{L^p(\Omega,\R^{3\times 3})} \le c\,\norm{\dev\sym\D u}_{L^p(\Omega,\R^{3\times 3})} \quad \text{with $\D u\times \nu= 0$ on $\partial\Omega$}
\end{equation}
as well as
\begin{equation}
 \inf_{T\in K_{dS,C}}\norm{\D u-T}_{L^p(\Omega,\R^{3\times3})}\leq c\,\norm{\dev \sym \D u }_{L^p(\Omega,\R^{3\times3})}
\end{equation}
respectively
\begin{equation}\label{eq:quanti_tracefree_class}
 \norm{u-\Pi u}_{W^{1,\,p}(\Omega,\R^3)}\leq c\,\norm{\dev \sym \D u }_{L^p(\Omega,\R^{3\times3})},
\end{equation}
where $\Pi$ denotes an arbitrary projection operator from $W^{1,\,p}(\Omega,\R^3)$ onto the space of \textit{conformal Killing vectors}, here the finite dimensional kernel of $\dev\sym\D$, which is given by quadratic polynomials of the form 
\begin{align*}
 \varphi_c(x)=\skalarProd{a}{x}\,x-\frac12a\,\norm{x}^2+\Anti(b)\,x +\beta\,x +c,  \quad
  \text{with $a\coloneqq\axl\widetilde A,b,c\in\R^3$ and $\beta\in\R,$}
\end{align*}
namely the \textit{infinitesimal conformal mappings}, cf. \cite{Reshetnyak1970,Neff_Jeong_IJSS09,agn_jeong2008existence,Dain2006tracefree,Reshetnyak1994, Schirra2012tracefreenD}, see Figure 1 for an illustration in 2D.

\begin{figure}[ht!]\label{fig1}
\centering
\includegraphics[width=90mm]{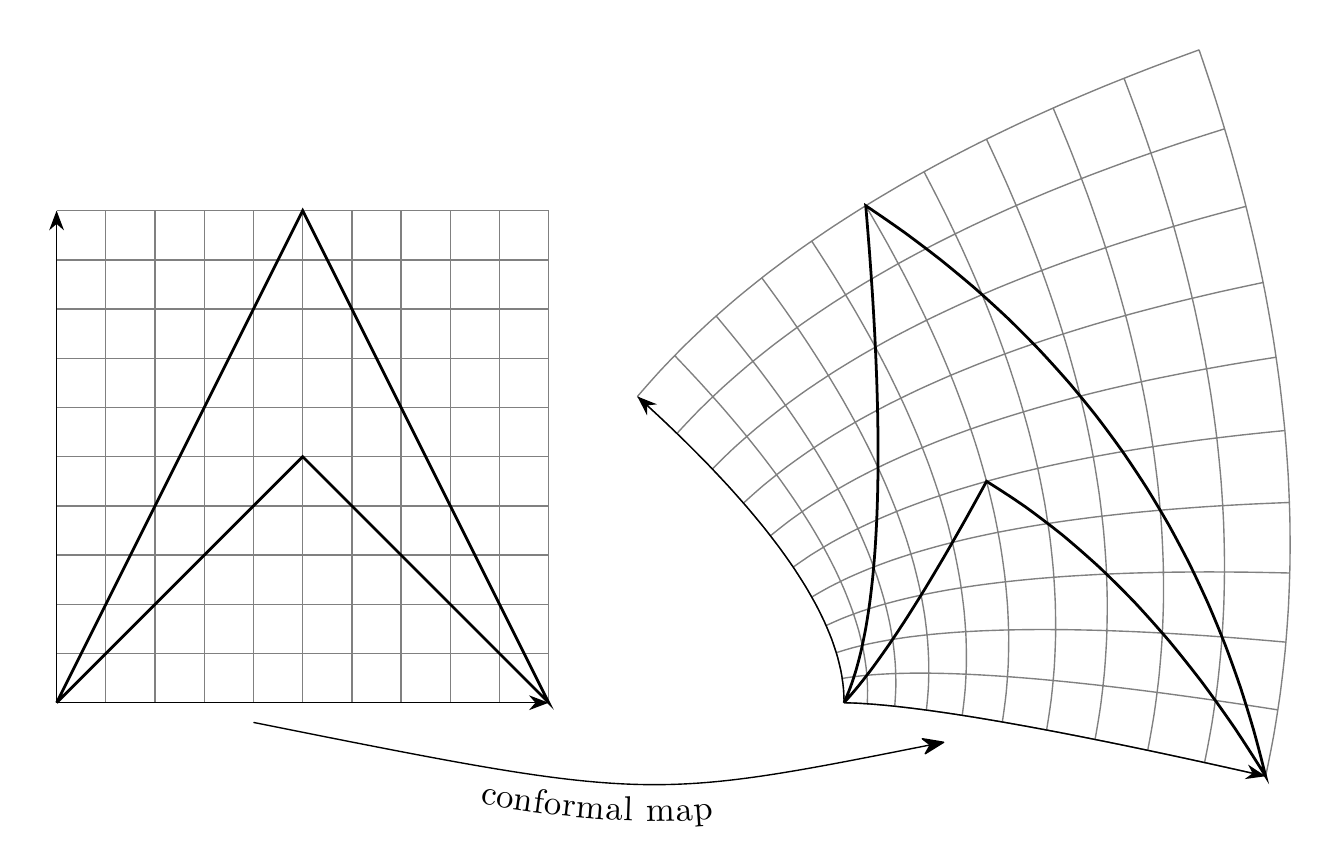}
\caption{In the planar case, the condition $\dev_2\sym \D u=0$ coincides with the Cauchy-Riemann equations for the function $u$ (see Appendix). Therefore, infinitesimal conformal mappings in 2D are holomorphic functions which preserve angles exactly. This ceases to be the case for 3D infinitesimal conformal mappings defined by $\dev_3\sym \D u=0$.}
\end{figure}

A first proof of \eqref{eq:quanti_tracefree_class}, even in all dimensions $n\geq3$, was given by Reshetnyak \cite{Reshetnyak1970} over domains which are star-like with respect to a ball. Over bounded Lipschitz domains the trace-free Korn's second inequality in all dimensions $n\geq3$, namely
\begin{align}\label{eq:tracefree-Korn2}
  \exists\, c > 0& \ \forall\, u\in W^{1,\,p}(\Omega,\R^n):\quad\norm{u}_{W^{1,\,p}(\Omega,\R^n)}\leq c\, \left(\norm{u}_{L^p(\Omega,\R^n)} + \norm{\dev_n\sym\D u}_{L^{p}(\Omega,\R^{n\times n})} \right)\,,
\end{align}
was justified by Dain \cite{Dain2006tracefree} in the case $p=2$ and by Schirra \cite{Schirra2012tracefreenD} for all $p>1$. Their proofs use again the Lions lemma and the ``higher order'' analogues of the differential relation \eqref{eq:lincombi_classical}:
\begin{equation}
 \D\Delta u = L(\D^2 \dev_n\sym \D u).
\end{equation}
However, the differential operators ~ $\sym \D$ ~ and ~ $\dev_n\sym \D$ ~ are particular cases of the so-called \textit{coercive elliptic operators} whose study began with Aronszajn \cite{Aronszajn1955}.

\medskip
Let us go back to
\begin{equation}
  \norm{ P }_{L^2(\Omega,\R^{3\times3})}\leq c\,\left(\norm{\dev \sym P }_{L^2(\Omega,\R^{3\times3})} + \norm{ \dev \Curl P }_{L^2(\Omega,\R^{3\times3})}\right)
\end{equation}
whose first proof for $P\in  W^{1,\,2}_0(\Curl; \Omega,\R^{3\times3})$  was given in \cite{agn_bauer2013dev} via the trace-free classical Korn's inequality, a Maxwell estimate and a Helmholtz decomposition and is not directly amenable to the $L^p$-case. Here, we catch up with the latter.

 {In the following section we start by summarizing the notations and collect some preliminary results from algebraic calculations which are needed in the subsequent vector calculus to establish relations of the type:
\begin{align}
 \D^3(A+\zeta\cdot\id) &= L (\D^2\dev \Curl (A+\zeta\cdot \id))
 \label{eq:relations}
\end{align}
for skew-symmetric tensor fields $A$ and scalar functions $\zeta$, where $L$ denotes a corresponding constant coefficients linear operator. Based on this ``higher order'' analogue of the differential relation \eqref{eq:lincombi} we prove our main results in the last section using a similar argumentation as in \cite{Dain2006tracefree, Schirra2012tracefreenD} which argue by the Lions lemma resp. Ne\v{c}as estimate and the compact embedding $ W^{1,p}(\Omega)\subset\!\subset L^p(\Omega)$ .
}

\section{Notations and preliminaries}
Let $n\geq2$. We consider for vectors $a,b\in\R^n$ the scalar product  $\skalarProd{a}{b}\coloneqq\sum_{i=1}^n a_i\,b_i \in \R$, the (squared) norm  $\norm{a}^2\coloneqq\skalarProd{a}{a}$ and  the dyadic product  $a\otimes b \coloneqq \left(a_i\,b_j\right)_{i,j=1,\ldots,n}\in \R^{n\times n}$. Similarly, we define the scalar product for matrices $P,Q\in\R^{n\times n}$ by $\skalarProd{P}{Q} \coloneqq\sum_{i,j=1}^n P_{ij}\,Q_{ij} \in \R$ and the (squared) Frobenius-norm by $\norm{P}^2\coloneqq\skalarProd{P}{P}$.  {We highlight by $.\cdot.$ the scalar multiplication of a scalar with a matrix, whereas matrix multiplication is denoted only by juxtaposition.}

Moreover, $P^T\coloneqq (P_{ji})_{i,j=1,\ldots,n}$ denotes the transposition of the matrix $P=(P_{ij})_{i,j=1,\ldots,n}$. The latter decomposes orthogonally into the symmetric part $\sym P \coloneqq \frac12\left(P+P^T\right)$ and the skew-symmetric part $\skew P \coloneqq \frac12\left(P-P^T\right)$. We will denote by $\so(n)\coloneqq \{A\in\R^{n\times n}\mid A^T = -A\}$ the Lie-Algebra of skew-symmetric matrices.

For the identity matrix we will write $\id$, so that the trace of a squared matrix $P$ is given by \ $\tr P \coloneqq \skalarProd{P}{\id}$. The deviatoric (trace-free) part of $P$ is given by $\dev_n P\coloneqq P -\frac1n\tr(P) {\cdot}\id$ and in three dimensions its index will be suppressed, i.e. we write $\dev$ instead of $\dev_3$.

 We will denote by $\mathscr{D}'(\Omega)$ the space of distributions on a bounded Lipschitz domain $\Omega\subset\R^n$ and by $W^{-k,\,p}(\Omega)$ the dual space of $W^{k,\,p'}_0(\Omega)$, where $p'=\frac{p}{p-1}$ is the H\"older dual exponent to $p$.

 {Throughout the paper we use $c$ as a generic positive constant, which is not necessarily the same in any two places, and we use $L(.)$ as a generic linear operator with constant coefficients, which also may differ in any two places within the paper.}

In $3$-dimensions we make use of the vector product $\times:\R^3\times\R^3 \to \R^3$.
Since the vector product $a\times .$ with a fixed vector $a\in\R^3$ is linear in the second component, there exists a unique matrix $\Anti(a)$ such that
\begin{equation}
 a\times b \eqqcolon \Anti(a) b \quad \forall\, b\in\R^3,
\end{equation}
and direct calculations show that for $a=(a_1,a_2,a_3)^T$ the matrix $\Anti(a)$ has the form
\begin{equation}\label{eq:antiform}
 \Anti(a)=\begin{pmatrix} 0 & -a_3 & a_2\\ a_3 & 0 & -a_1\\ -a_2 & a_1 & 0\end{pmatrix}.
\end{equation}
 {The inverse of $\Anti:\R^3\to\so(3)$ is denoted by $\axl:\so(3)\to\R^3$ and fulfills ~ $\axl(A)\times b = A\,b$ ~ for all skew-symmetric $(3\times3)$-matrices $A$ and vectors $b\in\R^3$. The matrix representation of the cross product }allows for a generalization towards a cross product of a matrix $P\in\R^{3\times 3}$ and a vector $b\in\R^3$ via
\begin{equation}
 P\times b\coloneqq P\,\Anti(b)\,,
\end{equation}
so, especially, for $P=\id$ it holds
\begin{equation}\label{eq:idcrossvec}
 \id\times b =\id\Anti(b)=\Anti(b) \qquad \forall\ b\in\R^3.
\end{equation}
 {We repeat the following crucial algebraic identity:
\begin{equation}\label{eq:prod_id}
(\Anti a)\times b = b \otimes a -\skalarProd{b}{a} {\cdot} \id \qquad \forall\,  a,b\in\R^3.
\end{equation}}
\begin{observation}\label{obs:traces_equal}
 For $P\in\R^{3\times 3}$ and $b\in\R^3$ we have
 \begin{equation}
   \dev(P\times b)= 0 \ \Leftrightarrow \ P\times b = 0.
 \end{equation}
\end{observation}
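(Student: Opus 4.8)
The implication ``$\Leftarrow$'' is trivial, since $P\times b=0$ immediately gives $\dev(P\times b)=0$. For the converse, I would argue as follows. If $b=0$, then by definition $P\times b=P\,\Anti(0)=0$ and there is nothing to prove, so assume $b\neq 0$. The decisive algebraic fact is that $\Anti(b)$ annihilates the vector $b$: indeed, $\Anti(b)\,b=b\times b=0$. Consequently, for \emph{any} matrix $P\in\R^{3\times 3}$ the matrix $P\times b=P\,\Anti(b)$ also annihilates $b$, i.e.
\[
 (P\times b)\,b = P\,\Anti(b)\,b = P\,(b\times b)=0.
\]

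Now suppose $\dev(P\times b)=0$. By the definition of the deviatoric part this means $P\times b=\lambda\cdot\id$ with $\lambda\coloneqq\tfrac13\tr(P\times b)\in\R$. Applying both sides to the vector $b$ and using the displayed identity gives $0=(P\times b)\,b=\lambda\, b$. Since $b\neq 0$ we conclude $\lambda=0$, hence $P\times b=0$, which is the claim.

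The statement is essentially algebraic and I do not expect a genuine obstacle; the only point worth isolating is the observation that $b$ lies in the kernel of $\Anti(b)$ (equivalently, that $\Anti(b)$ has rank $2$ for $b\neq 0$), which forces the scalar in any identity $P\times b=\lambda\cdot\id$ to vanish. One could alternatively expand $P=\sym P+\skew P$ and invoke the algebraic identity \eqref{eq:prod_id} on the skew part, but the kernel argument above treats all $P$ uniformly and is shorter.
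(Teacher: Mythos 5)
Your proof is correct, and it is in fact shorter and more elementary than the one in the paper. The pivotal shared ingredient in both arguments is the fact that $b$ lies in the kernel of $\Anti(b)$, hence $(P\times b)\,b = P(b\times b)=0$ for every $P\in\R^{3\times3}$; but you exploit it more directly. Once you note that $\dev(P\times b)=0$ forces $P\times b$ to be a scalar multiple $\lambda\cdot\id$ of the identity, applying both sides to $b$ immediately kills $\lambda$ (for $b\neq 0$), and the degenerate case $b=0$ is handled separately. This avoids the paper's decomposition of $P$ into symmetric and skew-symmetric parts, the use of the algebraic identity $(\Anti a)\times b = b\otimes a - \skalarProd{b}{a}\cdot\id$, and the intermediate computation of $\skalarProd{b}{\dev(P\times b)\,b}$. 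What the paper's longer route buys is a \emph{quantitative} conclusion: the explicit identity
\begin{equation*}
 \norm{b}^2 {\cdot} P\times b = \norm{b}^2 {\cdot}\dev(P\times b) - \skalarProd{b}{\dev(P\times b)\,b}\cdot\id
\end{equation*}
yields, via Cauchy--Schwarz, the two-sided norm estimate $\norm{\dev(P\times b)}\le\norm{P\times b}\le(1+\sqrt{3})\norm{\dev(P\times b)}$ with an explicit constant, and the same identity is reused in Remark~2.3 to contrast the algebraic statement with its (false) formal analogue for $\Curl$. Your argument proves the stated equivalence cleanly but does not give the norm comparison as a by-product. If you need only the qualitative Observation, your kernel argument is preferable for its brevity; if the quantitative bound or the identity itself is wanted downstream, the paper's computation is the one to keep.
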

\begin{proof}
 We decompose $P$ into its symmetric and skew-symmetric part, i.e.,
 \[
P =  S + A = S + \Anti(a), \qquad \text{for some $S\in\Sym(3)$, $A\in\so(3)$ and with $a=\axl(A)$.}
 \]
For a symmetric matrix $S$ it holds \ $\tr(S\times b)= 0$ for any $b\in\R^3$, since\footnote{Cf.~the Appendix for component-wise calculations.}
{
\begin{align}\label{eq:trStimesb}
\tr(S\times b)&=\skalarProd{S\times b}{\id}_{\R^{3\times 3}}=\skalarProd{S\Anti(b)}{\id}_{\R^{3\times 3}} = -\skalarProd{S}{\Anti(b)}_{\R^{3\times 3}}\overset{S\in\Sym(3)}{=}0.
\end{align}
}
Thus, using the decomposition $P = S + \Anti(a)$, we have:
\begin{align}
 \dev(P\times b) &= P\times b -\frac13\tr(P\times b) {\cdot}\id \overset{\eqref{eq:trStimesb}}{=} P\times b -\frac13\tr((\Anti a)\times b) {\cdot}\id\notag\\
 &\overset{\mathclap{\eqref{eq:prod_id}}}{=}\ P\times b - \frac13\tr(b\otimes a -\skalarProd{b}{a} {\cdot}\id) {\cdot}\id = P\times b +\frac23\skalarProd{a}{b} {\cdot}\id.\label{eq:firststepdevPtimesb}
\end{align}
Moreover, for any matrix $P\in\R^{3\times3}$ we note that
\begin{equation}\label{eq:crossprod_scalarprod}
 (P\times b)\,b = (P\Anti(b)) b = P(\Anti(b)\, b)=P(b\times b) =0.
\end{equation}
Thus, we obtain
\begin{equation}\label{eq:hilfs_schritt2}
\skalarProd{b}{\dev(P\times b)\,b} \overset{\eqref{eq:firststepdevPtimesb}}{=}\skalarProd{b}{\big(P\times b +\frac23\skalarProd{a}{b} {\cdot}\id\big)\, b} \overset{\eqref{eq:crossprod_scalarprod}}{=} \frac23\skalarProd{a}{b}\,\norm{b}^2,
\end{equation}
and the conclusion follows from the identity
\begin{align}\label{eq:PtbedevPtb}
 \norm{b}^2 {\cdot} P\times b &\overset{\eqref{eq:firststepdevPtimesb}}{=} \norm{b}^2  {\cdot} \dev(P\times b)-\frac23\norm{b}^2\skalarProd{a}{b} {\cdot}\id
 \overset{\eqref{eq:hilfs_schritt2}}{=} \norm{b}^2  {\cdot} \dev(P\times b) - \skalarProd{b}{\dev(P\times b)\,b} {\cdot}\id.
 \end{align}
 An application of the Cauchy-Bunyakovsky-Schwarz inequality on the right hand side of \eqref{eq:PtbedevPtb} shows that
 \begin{equation}
 \norm{\dev(P\times b)} \le \norm{P\times b} \le \left(1+\sqrt{3}\right)\norm{\dev(P\times b)}\,.
 \end{equation}
\end{proof}

\begin{observation}\label{obs:zwei}
 Let $a\in\R^3$ and $\alpha\in\R$, then
 \[
 (\Anti(a)+\alpha\cdot\id)\times b= 0 \text{ ~ for $b\in\R^3\backslash\{0\}$}  \quad \Rightarrow \quad a=0 \text{ and } \alpha=0.
 \]

\end{observation}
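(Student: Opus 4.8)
The plan is to prove Observation~\ref{obs:zwei} by a short direct computation, using the product identity \eqref{eq:prod_id} to unpack the matrix $(\Anti(a)+\alpha\cdot\id)\times b$ in terms of the rank-one tensor $b\otimes a$ and the identity $\id$, and then reading off the coefficients by testing against suitable vectors. First I would write, using \eqref{eq:idcrossvec} and \eqref{eq:prod_id},
\[
 (\Anti(a)+\alpha\cdot\id)\times b = (\Anti a)\times b + \alpha\,\Anti(b) = b\otimes a - \skalarProd{b}{a}\cdot\id + \alpha\,\Anti(b).
\]
Thus the hypothesis $(\Anti(a)+\alpha\cdot\id)\times b = 0$ is equivalent to the matrix identity $b\otimes a + \alpha\,\Anti(b) = \skalarProd{b}{a}\cdot\id$.

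Next I would extract scalar consequences by taking traces and by evaluating on $b$. Taking the trace of both sides and using $\tr(b\otimes a)=\skalarProd{a}{b}$ and $\tr(\Anti(b))=0$ gives $\skalarProd{a}{b} = 3\skalarProd{a}{b}$, hence $\skalarProd{a}{b}=0$; so the right-hand side vanishes and we are left with $b\otimes a = -\alpha\,\Anti(b)$. Applying both sides to the vector $b$ and using $\Anti(b)\,b = b\times b = 0$ (as in \eqref{eq:crossprod_scalarprod}) yields $\skalarProd{a}{b}\,b = 0$, which we already knew. The decisive step is instead to compare symmetric and skew-symmetric parts: the left-hand side $b\otimes a$ has symmetric part $\sym(b\otimes a) = \tfrac12(b\otimes a + a\otimes b)$, while the right-hand side $-\alpha\,\Anti(b)$ is skew-symmetric. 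Hence $b\otimes a + a\otimes b = 0$; evaluating this on $b$ gives $\norm{b}^2\cdot a + \skalarProd{a}{b}\,b = 0$, and since $\skalarProd{a}{b}=0$ and $b\neq0$ this forces $a=0$. With $a=0$ the identity reduces to $\alpha\,\Anti(b)=0$, and since $b\neq 0$ the matrix $\Anti(b)$ is nonzero, so $\alpha=0$.

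I do not expect any serious obstacle here: the statement is purely algebraic and follows from \eqref{eq:prod_id} together with the orthogonal splitting into symmetric and skew-symmetric parts. The only point requiring the slightest care is making sure one does not merely re-derive $\skalarProd{a}{b}=0$ twice without ever pinning down $a$ itself; this is why the comparison of symmetric parts of $b\otimes a = -\alpha\,\Anti(b)$, rather than just evaluating on $b$, is the crucial move. An equivalent route, should one prefer to avoid the symmetric/skew splitting, is to evaluate the identity $b\otimes a = -\alpha\,\Anti(b)$ on a vector $c$ orthogonal to $b$: the left side gives $\skalarProd{a}{c}\,b$ while the right side gives $-\alpha\,(b\times c)$, which is orthogonal to $b$; comparing components along $b$ forces $\skalarProd{a}{c}=0$ for all $c\perp b$, so $a$ is parallel to $b$, and then $a = \lambda b$ together with $\skalarProd{a}{b}=0$ and $b\neq 0$ again gives $a=0$, whence $\alpha=0$ as before.
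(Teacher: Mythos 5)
Your proof is correct and follows essentially the same route as the paper: expand via \eqref{eq:prod_id} and \eqref{eq:idcrossvec}, take the trace to get $\skalarProd{a}{b}=0$, then take the symmetric part to reduce to $\sym(b\otimes a)=0$. The only cosmetic difference is in the last step, where the paper reads $a=0$ off the norm identity $\norm{\sym(a\otimes b)}^2=\tfrac12\norm{a}^2\norm{b}^2+\tfrac12\skalarProd{a}{b}^2$, while you evaluate $b\otimes a+a\otimes b=0$ on $b$; both are equally short and elementary.
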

\begin{proof}
 By \eqref{eq:prod_id} and \eqref{eq:idcrossvec} we have:
 \begin{equation}\label{eq:firststep}
  0 = (\Anti(a)+\alpha\cdot\id)\times b = b\otimes a -\skalarProd{b}{a} {\cdot}\id + \alpha\cdot \Anti(b).
 \end{equation}
Taking the trace on both sides we obtain
\[
0 = \tr( b\otimes a -\skalarProd{b}{a} {\cdot}\id + \alpha\cdot \Anti(b))  = \skalarProd{a}{b}-3\,\skalarProd{a}{b} = -2\,\skalarProd{a}{b}.
\]
Thus, reinserting $\skalarProd{b}{a} =0$ in \eqref{eq:firststep} and applying $\sym$ on both sides, this implies $\sym(b\otimes a)=0$. {Since
\begin{equation}
 \norm{\sym(a\otimes b)}^2 = \frac12\norm{a}^2\norm{b}^2+\frac12\skalarProd{a}{b}^2
\end{equation}
and $b\neq0$
}
we must have $a=0$. Hence, by \eqref{eq:firststep} also $\alpha=0$.
\end{proof}

Formally the gradient and the curl of a vector field $a:\Omega\to\R^3$ can be seen as
$$
\D a = a\otimes \nabla \quad \text{and}\quad \curl a = a\times (-\nabla).
$$
The latter also generalizes to $(3\times3)$-matrix fields $P:\Omega\to\R^{3\times3}$ row-wise:\footnote{In the literature, the matrix $\Curl$ operator is sometimes defined as our transposed $(\Curl P)^T$, cf. Ciarlet \cite[Problem 6.18-4]{Ciarlet2013FAbook}.}
\begin{equation}\label{eq:Curl_def}
 \Curl P = P \times (-\nabla) = \begin{pmatrix} (P^T e_1)^T \\ (P^Te_2)^T \\ (P^Te_3)^T \end{pmatrix} \times (-\nabla)  =\begin{pmatrix} (\operatorname{curl}\,(P^Te_1))^T \\(\operatorname{curl}\,(P^Te_2))^T \\(\operatorname{curl}\,(P^Te_3))^T  \end{pmatrix}\in\R^{3\times3}.
\end{equation}
 {Replacing $b$ by $\nabla$ in \eqref{eq:prod_id} we obtain Nye's formulas
\begin{subequations}\label{eq:Nye-prelim}
 \begin{align}
  \Curl A &= \tr(\D \axl A)\cdot\id- (\D \axl A)^T,\label{eq:Nye-prelim_a}\\
\shortintertext{and}
\D \axl A &= \frac12 \,\tr(\Curl A)\cdot\id - (\Curl A)^T\label{eq:Nye-prelim_b}
 \end{align}
\end{subequations}
for all skew-symmetric $(3\times3)$-matrix fields $A$.
}
\begin{remark}
 Formal calculations (e.g. replacing $b$ by $\nabla$) have to be performed very carefully. Indeed, they are allowed in algebraic identities but fail, in general, for implications, e.g. for $A\in\so(3)$ and $b\in\R^3$  we have ~ $A\times b = 0$ ~ if and only if ~ $\dev(A\times b)= 0$, since the following expression holds true, cf. Observation \ref{obs:traces_equal} and \eqref{eq:PtbedevPtb}:
 \begin{equation}\label{eq:AtbedevAtb}
 \norm{b}^2 {\cdot} A\times b =\norm{b}^2  {\cdot} \dev(A\times b) - \skalarProd{b}{\dev(A\times b)\,b} {\cdot}\id\,.
 \end{equation}
 However,~ $\dev(\Curl A)=\dev(A\times(-\nabla))=0$ ~ does not imply already that ~ $\Curl A = A\times(-\nabla)= 0$, due to the counterexample ~ $A=\Anti(x)$, ~ since  by Nye's formula \eqref{eq:Nye-prelim} we have~ $\Curl(\Anti(x))=2\cdot\id$. ~ Of course, we can interpret \eqref{eq:AtbedevAtb} also in the sense of vector calculus, which gives then an expression for ~ $\Delta\Curl A$ ~ in terms of the second distributional derivatives of ~ $\dev(\Curl A)$, ~ but, the latter would have no meaning for the relation of ~ $\Curl A$ ~ and ~ $\dev\Curl A$.
\end{remark}

\begin{lemma} \label{lem:lin_combi} Let $A\in\mathscr{D}'(\Omega,\so(3))$ and $\zeta\in\mathscr{D}'(\Omega,\R)$. Then
\begin{thmenum}
 \item the entries of  $\D^2 (A+\zeta\cdot\id)$ are  linear combinations of the entries of $\D\Curl (A+\zeta\cdot\id)$.\label{lin_combi_trace_a}
 \item the entries of  $\D^2 A$ are  linear combinations of the entries of $\D\dev\Curl A$.\label{lin_combi_trace_b}
 \item  the entries of $\D^3 (A+\zeta\cdot\id)$ are linear combinations of the entries of $\D^2  \dev\Curl(A+\zeta\cdot\id)$.\label{lin_combi_trace_c}
\end{thmenum}

\end{lemma}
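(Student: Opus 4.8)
The plan is to treat the three items in sequence, each building on the previous one via Nye's formula \eqref{eq:Nye-prelim} and the algebraic identities collected above. For \ref{lin_combi_trace_a}, I would start from Nye's formula \eqref{eq:Nye-prelim_a}, which gives $\Curl A = \tr(\D\axl A)\cdot\id - (\D\axl A)^T$ for skew-symmetric $A$. Since $\zeta\cdot\id$ has $\Curl(\zeta\cdot\id) = \id\times(-\nabla\zeta) = -\Anti(\nabla\zeta)$ by \eqref{eq:idcrossvec}, the quantity $\Curl(A+\zeta\cdot\id)$ encodes both $\D\axl A$ and $\nabla\zeta$ linearly. Inverting: the off-diagonal (skew) part of $\Curl(A+\zeta\cdot\id)$ recovers $\nabla\zeta$ together with the skew part of $\D\axl A$, while the symmetric part recovers $\sym\D\axl A$ (compare Observation \ref{obs:zwei}); taking one more derivative, $\D\Curl(A+\zeta\cdot\id)$ thus contains as linear combinations of its entries all of $\D^2\axl A$ and $\D\nabla\zeta = \D^2\zeta$. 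Since $\D^2 A = \D^2\Anti(\axl A) = \Anti(\D^2\axl A)$ entrywise and $\D^2(\zeta\cdot\id) = \D^2\zeta\cdot\id$, this establishes \ref{lin_combi_trace_a}.

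For \ref{lin_combi_trace_b}, the point is that $\dev$ removes exactly the trace, and one wants to recover $\D^2 A$ from $\D\dev\Curl A$ alone (no $\zeta$ present). Here I would differentiate the identity \eqref{eq:AtbedevAtb} — or rather its vector-calculus reading sketched in the Remark — but more cleanly: from Nye \eqref{eq:Nye-prelim_a}, $\tr\Curl A = 2\operatorname{div}\axl A$, so $\dev\Curl A = \Curl A - \tfrac23(\operatorname{div}\axl A)\cdot\id$, and hence $\Curl A = \dev\Curl A + \tfrac13\tr(\Curl A)\cdot\id$. Taking $\Curl$ of the Nye relation once more (or using that $\operatorname{div}\axl A$ itself appears among the derivatives of $\Curl A$) lets one express $\tr\Curl A$, after one differentiation, through $\D\dev\Curl A$: indeed differentiating and taking suitable traces/contractions of $\D\Curl A = \D\dev\Curl A + \tfrac13\D(\tr\Curl A)\cdot\id$ gives a closed linear system for the components of $\D\tr\Curl A$ in terms of $\D\dev\Curl A$ (this is the heart of the equivalence $\Curl P\in W^{m,p}\Leftrightarrow\dev\Curl P\in W^{m,p}$ advertised in the introduction). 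Combined with \ref{lin_combi_trace_a} applied with $\zeta\equiv 0$, this yields \ref{lin_combi_trace_b}.

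Finally, \ref{lin_combi_trace_c} follows by merging the two previous mechanisms at one higher order of differentiation. Applying \ref{lin_combi_trace_a} we know $\D^2(A+\zeta\cdot\id) = L(\D\Curl(A+\zeta\cdot\id))$, hence $\D^3(A+\zeta\cdot\id) = L(\D^2\Curl(A+\zeta\cdot\id))$. It then remains to recover $\D^2\Curl(A+\zeta\cdot\id)$ from $\D^2\dev\Curl(A+\zeta\cdot\id)$, i.e. to recover the missing second derivatives of $\tr\Curl(A+\zeta\cdot\id)$. Since $\tr\Curl(A+\zeta\cdot\id) = 2\operatorname{div}\axl A - 2\Delta$-type terms — more precisely $\tr\Curl(\zeta\cdot\id) = -\tr\Anti(\nabla\zeta) = 0$, so $\tr\Curl(A+\zeta\cdot\id) = 2\operatorname{div}\axl A$, the same scalar as before — the argument from \ref{lin_combi_trace_b} (the closed linear system for $\D\tr\Curl$ in terms of $\D\dev\Curl$), differentiated once more, expresses $\D^2\tr\Curl(A+\zeta\cdot\id)$ through $\D^2\dev\Curl(A+\zeta\cdot\id)$, and we are done.

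The main obstacle I anticipate is item \ref{lin_combi_trace_b} (and the bookkeeping it forces in \ref{lin_combi_trace_c}): showing that after a single differentiation the scalar $\tr\Curl A$ becomes a linear combination of the entries of $\D\dev\Curl A$ is not a purely algebraic fact about $\Curl A$ at a point — at the algebraic level $\dev$ genuinely loses the trace, as the counterexample $A=\Anti(x)$ in the Remark shows. One must exploit that $\Curl A$ is not an arbitrary tensor field but a curl, so its rows are divergence-free; spelling out which contractions of $\D\Curl A$ vanish identically and solving the resulting (constant-coefficient, overdetermined but consistent) linear system for $\D\tr\Curl A$ is where the real work lies. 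Everything else reduces to careful but routine manipulation of Nye's formula \eqref{eq:Nye-prelim} and the entrywise identities $\D^k\Anti(v)=\Anti(\D^k v)$.
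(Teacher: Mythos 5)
Your overall strategy — Nye's formula, splitting into symmetric and skew parts, and recovering the trace of $\Curl$ from the curl of the transposed deviatoric curl — is the paper's, and your routes for (b) and (c) are sound up to bookkeeping. But there is a genuine gap in (a). You claim that the skew part of $\Curl(A+\zeta\cdot\id)$ ``recovers $\nabla\zeta$ together with the skew part of $\D\axl A$.'' It does not: by Nye's formula and $\Curl(\zeta\cdot\id)=-\Anti(\nabla\zeta)$, that skew part equals $\skew(\D\axl A)-\Anti(\nabla\zeta)$, a single three-parameter object in which the two contributions are fused; neither summand is separately determined. The subsequent assertion that ``taking one more derivative, $\D\Curl(A+\zeta\cdot\id)$ thus contains as linear combinations of its entries all of $\D^2\axl A$ and $\D^2\zeta$'' is exactly the content of (a), and you supply no mechanism that effects the separation.

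The paper closes this gap by applying $\Curl$ once more to the transposed field, i.e.\ by using the operator $\inc B=\Curl([\Curl B]^T)$, together with the structural facts that $\inc(\zeta\cdot\id)=\Delta\zeta\cdot\id-\D^2\zeta$ is symmetric while $\inc A=-\Anti(\nabla\tr(\D\axl A))$ is skew; hence $\sym\inc(A+\zeta\cdot\id)$ isolates the $\zeta$-data and yields $\D^2\zeta$ directly as a linear combination of entries of $\D\Curl(A+\zeta\cdot\id)$, after which $\D^2 A$ follows by differentiating $\D A=L(\Curl(A+\zeta\cdot\id))+L(\Anti(\nabla\zeta))$. An alternative that fits the framing you set up — extract $\sym\D\axl A$ from the symmetric part of Nye, apply the classical identity $\D^2 u=L(\D\sym\D u)$ with $u=\axl A$ to recover all of $\D^2\axl A$, then subtract $\D\skew(\D\axl A)$ from the differentiated skew part to isolate $\Anti(\D^2\zeta)$ — would also close the gap, but it is not written down in your text. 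For (c), the paper does not compose (a) with trace recovery as you propose; it computes $\inc([\dev\Curl(A+\zeta\cdot\id)]^T)$ directly to obtain $\D^2\tr(\D\axl A)$. Your composition does reach the same conclusion, but only once (a) has been repaired.
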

\begin{proof}
Observe that applying \eqref{eq:idcrossvec} to the vector field $\nabla \zeta$ we obtain:
\begin{equation}\label{eq:skalarID}
 \Curl (\zeta\cdot\id) \overset{\eqref{eq:Curl_def}}{=} \id \times (-\nabla \zeta) \overset{\eqref{eq:idcrossvec}}{=} -\Anti(\nabla \zeta).
\end{equation}
Let us first start by proving part \ref{lin_combi_trace_b}. From Nye's formula \eqref{eq:Nye-prelim_a} we obtain
\begin{equation}
 \dev \Curl A = \frac13\tr(\D\axl A) {\cdot}\id - (\D\axl A)^T\label{eq:devNye}
\end{equation}
so that taking the $\Curl$ of the transpositions on both sides gives
\begin{equation}\label{eq:daraufwirdaxlangewendet}
 \Curl([ \dev \Curl A]^T) \underset{\eqref{eq:devNye}}{\overset{\Curl \circ \D\, \equiv 0}{=} } \frac13\Curl(\tr(\D\axl A) {\cdot}\id) \overset{\eqref{eq:skalarID}}{=} - \frac13\Anti(\nabla \tr(\D\axl A))\,.
\end{equation}
In other words, we have that $\Curl([ \dev \Curl A]^T)\in\so(3)$, and applying $\axl$ on both sides of \eqref{eq:daraufwirdaxlangewendet} we obtain
\begin{equation}\label{eq:lin_combi_dev}
 \nabla \tr(\D\axl A) = -3\axl(\Curl([ \dev \Curl A]^T)) = L_0(\D\dev \Curl A).
\end{equation}
Taking the  $\partial_j$-derivative of \eqref{eq:devNye} for $j=1,2,3$ we conclude
\begin{equation}
 \partial_j(\D\axl A)^T \overset{\eqref{eq:devNye}}{=} \frac13\partial_j\tr(\D\axl A)-\partial_j\dev \Curl A  \overset{\eqref{eq:lin_combi_dev}}{=}\widetilde L_0(\D\dev \Curl A)\,,
\end{equation}
which establishes part \ref{lin_combi_trace_b}, namely $\D^2 A = L_2(\D(\dev \Curl A))$ for skew-symmetric tensor fields $A$.\medskip

The proof of part \ref{lin_combi_trace_a} is divided into the following two key observations:

\begin{enumerate}[label=(a.\roman*)~]
\begin{multicols}{2}
 \item $\D^2 \zeta = \widetilde L_1(\D\Curl (A+\zeta\cdot\id))$,\label{eq:firstID}
 \item $\D^2 A = \widetilde L_2(\D\Curl (A+\zeta\cdot\id))$.\label{eq:secondID}
\end{multicols}
\end{enumerate}
\noindent
To show that each entry of the Hessian matrix $\D^2 \zeta$ is a linear combination of the entries of $\D\Curl (A+\zeta\cdot\id)$ we make use of the second-order differential operator $\inc$ given for $B\in \mathscr{D}'(\Omega,\R^{3\times 3})$  via\footnote{See Kr\"oner \cite[\S 8]{Kroener1954} for a component-wise expression of the incompatibility operator $\inc$.} \begin{equation}\inc B \coloneqq \Curl ([\Curl B]^T)\end{equation}
so that
\begin{align}
 \inc (\zeta\cdot\id) &= \Curl ([\Curl(\zeta\cdot\id)]^T)\overset{\eqref{eq:skalarID}}{=}\Curl (-[\Anti(\nabla \zeta)]^T) = \Curl (\Anti(\nabla \zeta))\notag\\
 &\overset{\mathclap{\eqref{eq:Nye-prelim_a}}}{=}\ \ \tr(\D\nabla\zeta)\cdot\id-(\D\nabla \zeta)^T
 = \Delta \zeta\cdot \id - \D^2 \zeta \in \Sym(3)\label{eq:inc_id_scalar}
 \end{align}
 is symmetric. On the other hand, for a skew-symmetric matrix field $A\in \mathscr{D}'(\Omega,\so(3))$ we have that
\begin{align}
 \inc A &= \Curl ([\Curl A]^T) \overset{\eqref{eq:Nye-prelim}}{=} \Curl ( \tr (\D \axl A)\cdot \id - \D \axl A )\notag\\
 &\overset{\mathclap{\Curl \circ \D\, \equiv 0}}{=}\qquad \Curl ( \tr (\D \axl A)\cdot \id) \overset{\eqref{eq:skalarID}}{=}-\Anti (\nabla\tr (\D \axl A))\in\so(3)\label{eq:inc_id_skew}
\end{align}
is skew-symmetric. Hence,
\begin{equation}
 \sym (\inc (A+\zeta\cdot\id)) = \Delta \zeta\cdot \id - \D^2 \zeta \quad \text{and}\quad \tr (\inc (A+\zeta\cdot\id)) = 2\,\Delta\zeta .
\end{equation}
In other words, the entries of the Hessian matrix of $\zeta$ are linear combinations of entries from $\inc (A+\zeta\cdot\id)$:
\begin{align}\label{eq:d2zeta1}
 \D^2\zeta &=  \Delta \zeta\cdot \id - \sym (\inc (A+\zeta\cdot\id)) = \frac12\tr (\inc (A+\zeta\cdot\id)) {\cdot\id} -  \sym (\inc (A+\zeta\cdot\id))\notag\\
 &= \widetilde  L_1(\D\Curl (A+\zeta\cdot\id)),
\end{align}
where we have used that the entries of $\inc B$ are, of course, linear combinations of entries of $\D \Curl B$.\medskip

To establish \ref{eq:secondID} from \ref{eq:firstID}, recall that for a skew-symmetric matrix field $A$ the entries of $\D A$ are linear combinations of the entries from $\Curl A$:
\begin{align}\label{eq:zuDiffenrenzieren}
  \D A &\overset{\eqref{eq:lincombi}}{=} L (\Curl A) = L(\Curl(A+\zeta\cdot\id))- L(\Curl (\zeta\cdot \id)) 
  \overset{\eqref{eq:skalarID}}{=}  L(\Curl(A+\zeta\cdot\id)) + L(\Anti (\nabla \zeta)).
\end{align}
We conclude by taking the $\partial_j$-derivative of \eqref{eq:zuDiffenrenzieren} for $j=1,2,3$, namely
\begin{equation*}
 \partial_j \D A = L(\partial_j\Curl(A+\zeta\cdot\id)) + L(\partial_j\Anti (\nabla \zeta)) \overset{\text{\ref{eq:firstID}}}{=} \widetilde L_3(\D\Curl(A+\zeta\cdot\id)).
\end{equation*}

Finally, we establish part \ref{lin_combi_trace_c} arguing in a similar way by showing the following linear combinations:
\begin{enumerate}[label=(c.\roman*)~]
\begin{multicols}{2}
 \item $\D^2 \zeta = \widetilde L_4(\D\dev\Curl (A+\zeta\cdot\id))$,\label{eq:firstID_c}
 \item $\D^3 A =\widetilde L_7(\D^2\dev\Curl (A+\zeta\cdot\id))$.\label{eq:secondID_c}
\end{multicols}
\end{enumerate}
Regarding \eqref{eq:skalarID} and \eqref{eq:Nye-prelim} we have
\begin{align}\label{eq:Nye_devCurl}
 \dev\Curl(A+\zeta\cdot\id) &\overset{\eqref{eq:skalarID}}{=} \dev[\Curl A -\Anti(\nabla \zeta)] =\dev\Curl A -\Anti(\nabla \zeta)\notag\\&\overset{\eqref{eq:Nye-prelim}}{=} \frac13\tr(\D\axl A) {\cdot}\id-(\D\axl A)^T -\Anti(\nabla \zeta).
\end{align}
Transposing and taking the $\Curl$ on both sides yields
\begin{equation}\label{eq:Nye_CurldevCurl}
 \Curl ([\dev\Curl(A+\zeta\cdot\id)]^T) \underset{\Curl \circ \D\, \equiv 0}{\overset{\eqref{eq:skalarID},\, \eqref{eq:Nye-prelim}}{=}} -\frac13\underset{\in\so(3)}{\underbrace{\Anti(\nabla \tr(\D\axl A))}}+ \underset{\in\Sym(3)}{\underbrace{\Delta \zeta\cdot \id - \D^2\zeta}}\vspace{-2ex}
\end{equation}
and we obtain, similar to the decomposition in \eqref{eq:d2zeta1}:
\begin{align}\label{eq:in_combi_dd_zeta}
\D^2\zeta &= \frac12\tr(\Curl ([\dev\Curl(A+\zeta\cdot\id)]^T) ) {\cdot\id}- \sym(\Curl ([\dev\Curl(A+\zeta\cdot\id)]^T) )\notag\\
& = \widetilde L_4(\D\dev\Curl(A+\zeta\cdot\id)).
\end{align}
On the other hand, taking $\inc$ of the transpositions on both sides of \eqref{eq:Nye_devCurl} gives
\begin{equation}
 \inc([\dev\Curl(A+\zeta\cdot\id)]^T)\underset{\eqref{eq:inc_id_skew}}{\overset{\eqref{eq:inc_id_scalar}}{=}} \frac13\Delta \tr(\D\axl A)\cdot\id - \frac13 \D^2\tr(\D\axl A)-\Anti(\nabla \Delta \zeta)\,,
\end{equation}
yielding the relation
\begin{align}\label{eq:Nye_combi_dd_trA}
 \D^2 \tr(\D\axl A) &= \frac32\tr(\inc([\dev\Curl(A+\zeta\cdot\id)]^T)) {\cdot\id}\notag\\
 &\hspace*{7em}-\sym(\inc([\dev\Curl(A+\zeta\cdot\id)]^T))  \notag\\
 &=\widetilde L_5(\D^2  \dev\Curl(A+\zeta\cdot\id)).
\end{align}
Considering the second distributional derivatives in \eqref{eq:Nye_devCurl}  we conclude
\begin{equation*}
\begin{split}
\D^3  \axl A &= \frac13 \D^2 \tr(\D\axl A) {\cdot} \id -\D^2([\dev\Curl(A+\zeta\cdot\id)]^T)+\D^2\Anti(\nabla \zeta)\\
&\underset{\eqref{eq:Nye_combi_dd_trA}}{\overset{\eqref{eq:in_combi_dd_zeta}}{=}}\widetilde L_6(\D^2  \dev\Curl(A+\zeta\cdot\id)).\qedhere
\end{split}
\end{equation*}
\end{proof}
\begin{remark}\label{rem:inc}
In the above proof we have used that the second-order differential operator $\inc$ does not change the symmetry property after application on square matrix fields, cf.~the Appendix. Further properties are collected e.g. in \cite[Appendix]{agn_neff2014unifying}, \cite[Sec. 2]{Amrouche2006inc} and \cite[Sec. 6.18]{Ciarlet2013FAbook}.

The incompatibility operator $\inc$ arises in dislocation models, e.g., in the modeling of elastic materials with dislocations or in the modeling of dislocated crystals, since the strain cannot be a symmetric gradient of a vector field as soon as dislocations are present and the notion of incompatibility is at the basis of a new paradigm to describe the inelastic effects, cf. \cite{agn_ebobisse2017fourth,Amstutz2019incompatibility,Amstutz2016analysis,Maggiani2015incompatible}, cf.~the Appendix for further comments. Moreover, the equation ~ $\inc\sym e\equiv 0$ ~ is equivalent to the \textit{Saint-Venant compatibility condition}\footnote{Those compatibility conditions are contained in the third appendix \S 32 p. 597 et seq. of the third edition of the lecture notes  \textit{R\'{e}sistance des corps solides} given by Navier and extended with several notes and appendices by Barr\'{e} de Saint-Venant and published as \textit{R\'{e}sum\'{e} des Le\c{c}ons donn\'{e}es \`{a} l'\'{E}cole des Ponts et Chauss\'{e}es sur l'Application de la M\'{e}canique}, vol. I, Paris, 1864. Their coordinate-free version can be found in Lagally's monograph on vector calculus from 1928 \cite[Ziff. 191]{Lagally1928} where it reads:
\[
 \nabla \times (\sym \D u ) \times \nabla \equiv 0
\]
and formally follows from the definitions of those operators, see \cite[Ziff. 191]{Lagally1928}, since
\[
 \nabla \times (\sym \D u ) \times \nabla = \frac 12 \nabla \times (\nabla \otimes u + u \otimes \nabla ) \times \nabla = \frac 12 [(\nabla\times \nabla)\otimes u\times \nabla + \nabla\times u \otimes (\nabla \times \nabla) ] \equiv 0.
\]
\label{footnote:inc}} defining the relation between the symmetric strain $\sym e$ and the displacement vector field $u$:
\begin{equation}
\inc \sym e \equiv 0 \quad \Leftrightarrow \quad \sym e = \sym \D u
\end{equation}
over simply connected domains, cf. \cite{Amrouche2006inc,Maggiani2015incompatible}. In the appendix we show that the operators $\inc$ and $\sym$ can be interchanged, so that
\begin{equation}
 \inc \sym e = \sym \inc e = \sym \Curl ([\Curl e]^T).
 \end{equation}
Investigations over multiply connected domains can be found e.g. in \cite{Ting1977StVenant,Geymonat2005Venant}.
\end{remark}

Returning to our proof, a crucial ingredient in our following argumentation is
\begin{theorem}[Lions lemma and Ne\v{c}as estimate]  \label{th:necas_estimate}
Let $\Omega\subset\R^n$ be a bounded  Lipschitz  domain. Let $m \in \Z$ and $p \in (1, \infty)$.
Then $f \in  \mathscr{D}'(\Omega,\R^d)$ and $\D f \in W^{m-1,\,p}(\Omega,\R^{d\times n})$ imply
$f \in W^{m,\,p}(\Omega,\R^d)$.
Moreover,
\begin{equation}  \label{eq:necas_m_p}
 \norm{ f}_{W^{m,\,p}(\Omega,\R^d)} \le c\,\left(\norm{ f}_{W^{m-1,\,p}(\Omega,\R^d)} + \norm{ \D f }_{W^{m-1,\,p}(\Omega,\R^{d\times n})}\right),
\end{equation}
with a constant $c=c(m,p,n,d,\Omega)>0$.
\end{theorem}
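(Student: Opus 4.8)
The plan is to treat this as the classical Lions--Ne\v{c}as lemma and to prove it in three stages --- the core estimate at level $m=0$, an elementary bootstrap to every $m\ge 1$, and the negative orders $m\le-1$ by duality --- after first reducing to scalar-valued $f$ (i.e.\ $d=1$) by arguing componentwise.

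First I would establish the core case $m=0$: if $f\in\mathscr D'(\Omega)$ with $\D f\in W^{-1,\,p}(\Omega,\R^n)$, then $f\in L^p(\Omega)$ and $\norm{f}_{L^p(\Omega)}\le c\,(\norm{f}_{W^{-1,\,p}(\Omega)}+\norm{\D f}_{W^{-1,\,p}(\Omega,\R^n)})$. On the whole space $\R^n$ this comes from the Mikhlin--H\"{o}rmander multiplier theorem: writing $\Lambda=(1-\Delta)^{1/2}$ and recalling that $W^{s,\,p}(\R^n)=\Lambda^{-s}L^p(\R^n)$ for $s\in\Z$, $1<p<\infty$, one has the identity $f=\Lambda^{-2}f-\sum_{j=1}^n(\Lambda^{-1}\partial_j)(\Lambda^{-1}\partial_j f)$, where $\Lambda^{-2}$ maps $W^{-1,\,p}(\R^n)$ into $L^p(\R^n)$ and each $\Lambda^{-1}\partial_j$ --- having the symbol $i\xi_j(1+|\xi|^2)^{-1/2}$, which satisfies Mikhlin's condition --- is bounded on $L^p(\R^n)$; since $\norm{\Lambda^{-1}\partial_j f}_{L^p}=\norm{\partial_j f}_{W^{-1,\,p}}$, the whole-space estimate follows. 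To descend to a bounded Lipschitz domain $\Omega$ I would localize: cover $\overline\Omega$ by finitely many balls with a subordinate smooth partition of unity $\{\varphi_k\}$, apply the $\R^n$-estimate to each $\varphi_k f$ (extended by zero on balls contained in $\Omega$, and after flattening the boundary by a bi-Lipschitz chart on boundary balls), bound the commutator term in $\D(\varphi_k f)=\varphi_k\,\D f+f\,\D\varphi_k$ using boundedness of multiplication by smooth functions on $W^{-1,\,p}$, use that bi-Lipschitz changes of variables preserve $W^{\pm1,\,p}$ for $1<p<\infty$, and sum over $k$.

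For $m\ge1$ the rest is routine: since $W^{m-1,\,p}(\Omega)\subset W^{-1,\,p}(\Omega)$, the case $m=0$ already yields $f\in L^p(\Omega)$, and then, using the characterization $W^{m,\,p}(\Omega)=\{g\in W^{m-1,\,p}(\Omega)\mid \partial_i g\in W^{m-1,\,p}(\Omega),\ i=1,\dots,n\}$ with equivalence of the natural norms, a short induction on $m$ (starting from $f\in W^{0,\,p}(\Omega)$ and $\D f\in W^{m-1,\,p}(\Omega,\R^n)$) gives $f\in W^{m,\,p}(\Omega)$ together with \eqref{eq:necas_m_p}. For $m\le-1$, where $W^{m,\,p}(\Omega)$ is a negative-order space, I would either repeat the localization scheme above --- it is insensitive to the sign of $m$, since the whole-space multiplier estimate $\norm{f}_{W^{m,\,p}(\R^n)}\le c\,(\norm{f}_{W^{m-1,\,p}(\R^n)}+\norm{\D f}_{W^{m-1,\,p}(\R^n)})$ holds for every $m\in\Z$ --- or argue by duality, testing against elements of $W^{-m,\,p'}_0(\Omega)$. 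I expect the main obstacle to be precisely the passage from $\R^n$ to the Lipschitz domain: handling the boundary charts and the resulting commutator and change-of-variables terms for the negative-order spaces $W^{-1,\,p}$ (and $W^{m-1,\,p}$, $m\le-1$), which is the genuine content of Ne\v{c}as's theorem; for the full technical details I would simply refer to \cite{Ciarlet2013FAbook,Geymonat86,CMM2018}.
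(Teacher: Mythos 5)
The paper does not actually prove this theorem: it states it and refers to the literature (Amrouche--Girault \cite{AG90}, Borchers--Sohr \cite{BS90}). Your sketch is therefore a genuine supplement rather than a parallel to anything in the text, and the route you take --- the whole-space estimate via the identity $f=\Lambda^{-2}f-\sum_j(\Lambda^{-1}\partial_j)(\Lambda^{-1}\partial_j f)$ and the Mikhlin multiplier theorem, followed by a partition of unity and boundary flattening to descend to a bounded Lipschitz domain --- is one of the standard proofs (cf.\ Geymonat \cite{Geymonat86}, or Amrouche--Ciarlet--Mardare \cite{CMM2018}). The whole-space part of your argument is correct, and your $m\ge1$ bootstrap is fine (once $m=0$ is known, $f\in L^p$ together with $\D f\in W^{m-1,\,p}$ for $m-1\ge0$ immediately gives $f\in W^{m,\,p}$). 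Two caveats worth making explicit. First, you rightly identify the localization as the real content, but the commutator and change-of-variables bookkeeping in $W^{-1,\,p}$ on a Lipschitz domain is exactly where the Ne\v{c}as literature spends its effort; flagging it and citing is acceptable since the paper does the same, but it is not a ``routine'' step. Second, for $m\le-1$ your suggestion to ``repeat the localization'' is shakier than it sounds: bi-Lipschitz changes of variables preserve $W^{\pm1,\,p}$ but not $W^{k,\,p}$ for $|k|\ge2$ in general, so the $W^{m-1,\,p}$ spaces that appear when $m\le-1$ are not simply transplanted by a Lipschitz chart; your alternative of arguing by duality against $W^{-m,\,p'}_0(\Omega)$ is the cleaner route and should be preferred. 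Modulo those standard technicalities (which the paper itself only cites), the proposal is a sound outline.
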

For the proof we refer to \cite[Proposition 2.10 and Theorem 2.3]{AG90}, \cite{BS90}. However, since we are dealing with higher order derivatives we also need a ``higher order'' version of the Lions lemma resp.\ Ne\v{c}as estimate.
\begin{corollary}\label{cor:LionsNecas_k}
 Let $\Omega\subset\R^n$ be a bounded  Lipschitz  domain, $m \in \Z$ and $p \in (1, \infty)$. Denote by  $\D^k f$ the collection of all distributional derivatives of order $k$. Then $f \in  \mathscr{D}'(\Omega,\R^d)$ and $\D^k f \in W^{m-k,\,p}(\Omega,\R^{d\times n^k})$ imply
$f \in W^{m,\,p}(\Omega,\R^d)$.
Moreover, 
\begin{equation}  \label{eq:necask_m_p}
 \norm{ f}_{W^{m,\,p}(\Omega,\R^d)} \le c\,\left(\norm{ f}_{W^{m-1,\,p}(\Omega,\R^d)} + \norm{ \D^k f }_{W^{m-k,\,p}(\Omega,\R^{d\times n^k})}\right),
\end{equation}
with a constant $c=c(m,p,n,d,\Omega)>0$.
\end{corollary}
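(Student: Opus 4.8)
The plan is to argue by induction on the differentiation order $k$, using Theorem~\ref{th:necas_estimate} --- which is precisely the case $k=1$ --- both as the base case and as the engine of the inductive step. For $k=1$ there is nothing to prove: the hypotheses and the conclusion of Corollary~\ref{cor:LionsNecas_k} reduce verbatim to those of Theorem~\ref{th:necas_estimate}.

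For the inductive step I would assume the statement already holds at order $k-1$, for \emph{every} $m\in\Z$ and every target dimension. Given $f\in\mathscr{D}'(\Omega,\R^d)$ with $\D^k f\in W^{m-k,\,p}(\Omega,\R^{d\times n^k})$, the key move is to apply the induction hypothesis not to $f$ itself but to its distributional gradient $g\coloneqq\D f\in\mathscr{D}'(\Omega,\R^{d\times n})$, with the order parameter $m$ replaced by $m-1$. Since $\D^{k-1}g=\D^k f$ and $(m-1)-(k-1)=m-k$, the required input $\D^{k-1}g\in W^{(m-1)-(k-1),\,p}$ is exactly what is assumed, so the induction hypothesis gives $g=\D f\in W^{m-1,\,p}(\Omega,\R^{d\times n})$ together with
\[
\norm{\D f}_{W^{m-1,\,p}(\Omega,\R^{d\times n})}\le c\,\big(\norm{\D f}_{W^{m-2,\,p}(\Omega,\R^{d\times n})}+\norm{\D^k f}_{W^{m-k,\,p}(\Omega,\R^{d\times n^k})}\big).
\]
Here I would invoke the elementary fact that differentiation maps $W^{m-1,\,p}(\Omega,\R^d)$ boundedly into $W^{m-2,\,p}(\Omega,\R^{d\times n})$ for every $m\in\Z$ (for non-negative orders this is immediate; for the negative-order spaces it follows by duality from the boundedness of $\partial_i\colon W^{1-m,\,p'}_0\to W^{2-m,\,p'}_0$). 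This lets me absorb the first term on the right, $\norm{\D f}_{W^{m-2,\,p}}\le c\,\norm{f}_{W^{m-1,\,p}}$, to obtain
\[
\norm{\D f}_{W^{m-1,\,p}(\Omega,\R^{d\times n})}\le c\,\big(\norm{f}_{W^{m-1,\,p}(\Omega,\R^d)}+\norm{\D^k f}_{W^{m-k,\,p}(\Omega,\R^{d\times n^k})}\big).
\]

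Finally I would feed this back into the case $k=1$, that is, apply Theorem~\ref{th:necas_estimate} to $f$ itself: since $\D f\in W^{m-1,\,p}$, it yields $f\in W^{m,\,p}(\Omega,\R^d)$ with
\[
\norm{f}_{W^{m,\,p}(\Omega,\R^d)}\le c\,\big(\norm{f}_{W^{m-1,\,p}(\Omega,\R^d)}+\norm{\D f}_{W^{m-1,\,p}(\Omega,\R^{d\times n})}\big),
\]
and chaining this with the previous display produces exactly the asserted estimate~\eqref{eq:necask_m_p}, which closes the induction. The only point that needs any care --- and it is the only thing one could call an obstacle --- is the index bookkeeping: one must apply the induction hypothesis to $\D f$ at the \emph{shifted} order $m-1$ rather than to $f$ at order $m$, and one must notice that the lower-order term $\norm{\D f}_{W^{m-2,\,p}}$ thrown up by the induction hypothesis is controlled by $\norm{f}_{W^{m-1,\,p}}$ and hence harmless. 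Beyond this there is no analytic content that is not already packaged in Theorem~\ref{th:necas_estimate}.
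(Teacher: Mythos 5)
Your proof is correct and is essentially the paper's own argument, repackaged as a formal induction on $k$: unrolling the recursion, your inductive step applies Theorem~\ref{th:necas_estimate} successively to $\D^{k-1}f,\D^{k-2}f,\ldots,f$, which is exactly the downward iteration the paper carries out explicitly, and both arguments rely on the same elementary boundedness of differentiation between Sobolev spaces of (possibly negative) order to absorb the lower-order terms into $\norm{f}_{W^{m-1,\,p}}$.
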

\begin{proof}The assertion $f \in W^{m,\,p}(\Omega,\R^d)$ and the estimate \eqref{eq:necask_m_p} follow by inductive application of Theorem \ref{th:necas_estimate} to $\D^l f$ with $l=k-1,k-2,\ldots,0$. Indeed, starting by applying Theorem \ref{th:necas_estimate} to $\D^{k-1} f$ gives $\D^{k-1} f \in W^{m-k+1,\,p}(\Omega,\R^{d\times n^{k-1}})$ as well as
 \begin{align}
   \norm{ \D^{k-1} f}_{W^{m-k+1,\,p}(\Omega,\R^{d\times n^{k-1}})} 
   &\le c\,\left(\norm{\D^{k-1} f}_{W^{m-k,\,p}(\Omega,\R^{d\times n^{k-1}})} + \norm{ \D^k f }_{W^{m-k,\,p}(\Omega,\R^{d\times n^k})}\right)\notag\\
   &\le c\,\left(\norm{f}_{W^{m-1,\,p}(\Omega,\R^d)} + \norm{ \D^k f }_{W^{m-k,\,p}(\Omega,\R^{d\times n^k})}\right).\label{eq:necas_hilf_k-1}
 \end{align}
 Now, we can apply Theorem \ref{th:necas_estimate} to $\D^{k-2} f$ to deduce $\D^{k-2} f \in W^{m-k+2,\,p}(\Omega,\R^{d\times n^{k-2}})$ and moreover
  \begin{align}
   \norm{ \D^{k-2} f}_{W^{m-k+2,\,p}(\Omega,\R^{d\times n^{k-2}})}
   &\le c\,\left(\norm{\D^{k-2} f}_{W^{m-k+1,\,p}(\Omega,\R^{d\times n^{k-1}})} + \norm{ \D^{k-1} f }_{W^{m-k+1,\,p}(\Omega,\R^{d\times n^{k-1}})}\right)\notag\\
   &\le c\,\left(\norm{f}_{W^{m-1,\,p}(\Omega,\R^d)} + \norm{ \D^{k-1} f }_{W^{m-k+1,\,p}(\Omega,\R^{d\times n^{k-1}})}\right)\notag\\
   &\overset{\mathclap{\eqref{eq:necas_hilf_k-1}}}{\le}c\,\left(\norm{f}_{W^{m-1,\,p}(\Omega,\R^d)} + \norm{ \D^k f }_{W^{m-k,\,p}(\Omega,\R^{d\times n^k})}\right). \label{eq:necas_hilf_k-2}
 \end{align}
Consequently, for all $l=k-1,k-2,\ldots,0$ we deduce $\D^l f \in W^{m-l,\,p}(\Omega,\R^{d\times n^l})$ as well as
  \begin{equation}
   \norm{ \D^l f}_{W^{m-l,\,p}(\Omega,\R^{d\times n^l})} \le c\,\left(\norm{f}_{W^{m-1,\,p}(\Omega,\R^d)} + \norm{ \D^k f }_{W^{m-k,\,p}(\Omega,\R^{d\times n^k})}\right). \label{eq:necas_hilf_l}
 \end{equation}
 \end{proof}

\begin{remark}
 The need to consider higher order derivatives is indicated by the appearance of linear terms in the kernel of Korn's quantitative versions, similar to the situation at the classical trace-free Korn inequalities \cite{Dain2006tracefree,Schirra2012tracefreenD}. In our case we have:
\end{remark}

{
\begin{lemma}\label{lem:Kernel}
Let $A\in L^p(\Omega,\so(3))$ and $\zeta\in L^p(\Omega,\R)$. Then we have in the distributional sense
\begin{thmenum}
 \item\label{kernel_a} ~ 
 $\Curl(A+\zeta\cdot\id)\equiv 0$  if and only if $A+\zeta\cdot\id=\Anti(\widetilde{A}\,x+b) +(\skalarProd{\axl \widetilde{A}}{x}+\beta)\cdot\id$ a.e. on $\Omega$,
  \item\label{kernel_b} ~ $\dev\Curl A \equiv 0$ if and only if $A=  \Anti(\beta\,x+b)$ a.e. on $\Omega$,
 \item\label{kernel_c} ~ $\dev\Curl(A+\zeta\cdot\id)\equiv 0$ if and only if $A+\zeta\cdot\id= \Anti\big(\widetilde{A}\,x+\beta\, x+b \big)+\big(\skalarProd{\axl\widetilde{A}}{x}+\gamma \big)\cdot\id$ a.e. on $\Omega$,
 \end{thmenum}
 with constant $\widetilde{A}\in\so(3)$, $b\in\R^3$, $\beta,\gamma\in\R$.
\end{lemma}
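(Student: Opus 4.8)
\emph{Structure of the argument.} Each item is an equivalence, so I prove the two implications separately. The ``if'' directions are routine verifications: for $\zeta$ and $A$ of the stated affine form, $\nabla\zeta$ and $\D\axl A$ are constant, so Nye's formula \eqref{eq:Nye-prelim_a} together with \eqref{eq:skalarID} yields a closed expression for $\Curl(A+\zeta\cdot\id)$ resp.\ $\Curl A$ whose (deviatoric part) one checks to vanish. For instance in (a): $\axl A=\widetilde A x+b$ gives $\Curl A=\tr(\widetilde A)\cdot\id-\widetilde A^{T}=\widetilde A$ since $\widetilde A\in\so(3)$, while $\zeta=\skalarProd{\axl\widetilde A}{x}+\beta$ gives, by \eqref{eq:skalarID}, $\Curl(\zeta\cdot\id)=-\Anti(\axl\widetilde A)=-\widetilde A$, so the sum is zero; (b) and (c) are the same computation, using in addition $\dev\widetilde A=\widetilde A$ and $\dev(\beta\cdot\id)=0$.

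\emph{The ``only if'' directions.} The idea is to apply $\Curl\big((\,\cdot\,)^{T}\big)$ to the hypothesis and use the algebraic identities already obtained in the proof of Lemma~\ref{lem:lin_combi}. In case (a), $\Curl(A+\zeta\cdot\id)\equiv0$ gives $\inc(A+\zeta\cdot\id)=\Curl([\Curl(A+\zeta\cdot\id)]^{T})\equiv0$, and by \eqref{eq:inc_id_scalar}--\eqref{eq:inc_id_skew} this reads $\big(\Delta\zeta\cdot\id-\D^{2}\zeta\big)-\Anti\big(\nabla\tr(\D\axl A)\big)\equiv0$; matching symmetric and skew parts, then taking the trace of the symmetric part, forces $\Delta\zeta=0$, hence $\D^{2}\zeta\equiv0$, and $\nabla\tr(\D\axl A)\equiv0$. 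Since $\Omega$ is a (connected) domain, the first relation means $\zeta(x)=\skalarProd{v}{x}+\beta$, and the second means $\operatorname{div}\axl A=\tr(\D\axl A)$ is a constant $\tau$. Re-inserting $\zeta$ into \eqref{eq:Nye-prelim_a}+\eqref{eq:skalarID}, the equation $\Curl(A+\zeta\cdot\id)=\tau\cdot\id-(\D\axl A)^{T}-\Anti(v)\equiv0$ shows that $\D\axl A$ is constant, so $\axl A$ is affine. Cases (b) and (c) are entirely parallel, using \eqref{eq:daraufwirdaxlangewendet} resp.\ \eqref{eq:Nye_CurldevCurl} in place of \eqref{eq:inc_id_scalar}--\eqref{eq:inc_id_skew}: in (b) one gets directly $\nabla\tr(\D\axl A)\equiv0$ (there is no $\zeta$) and then $\D\axl A$ constant from \eqref{eq:devNye}; in (c) one gets $\D^{2}\zeta\equiv0$ and $\nabla\tr(\D\axl A)\equiv0$ from \eqref{eq:Nye_CurldevCurl}, and then $\D\axl A$ constant by re-inserting the now affine $\zeta$ into \eqref{eq:Nye_devCurl}. (One may instead invoke Lemma~\ref{lem:lin_combi} to conclude that $A+\zeta\cdot\id$ is a polynomial of degree $\le1$ in (a),(b) resp.\ $\le2$ in (c), and then argue purely algebraically; the route above avoids the degree-$2$ bookkeeping.)

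\emph{Identifying the constants.} Once $\axl A=Mx+b$ and $\zeta=\skalarProd{v}{x}+\beta$ (resp.\ $+\gamma$ in (c)) with $M\in\R^{3\times3}$, the constant part of the relevant Nye identity becomes a linear-algebra condition on $M$ and $v$. In (a): $\tr(M)\cdot\id-M^{T}-\Anti(v)=0$; its symmetric part and a trace force $\tr M=0$ and $\sym M=0$, hence $M\in\so(3)$, and its skew part then forces $M=\Anti(v)=:\widetilde A$ (so $v=\axl\widetilde A$), yielding $A+\zeta\cdot\id=\Anti(\widetilde A x+b)+(\skalarProd{\axl\widetilde A}{x}+\beta)\cdot\id$. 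In (b): $\tfrac13\tr(M)\cdot\id-M^{T}=0$ forces $M=\tfrac13\tr(M)\cdot\id=:\beta\cdot\id$, i.e.\ $A=\Anti(\beta x+b)$. In (c): $\tfrac13\tr(M)\cdot\id-M^{T}-\Anti(v)=0$ forces $\sym M=\tfrac13\tr(M)\cdot\id=:\beta\cdot\id$ and $\skew M=\Anti(v)$, i.e.\ $M=\beta\cdot\id+\Anti(v)$ with $\widetilde A:=\Anti(v)$ and $v=\axl\widetilde A$, so $A+\zeta\cdot\id=\Anti(\widetilde A x+\beta x+b)+(\skalarProd{\axl\widetilde A}{x}+\gamma)\cdot\id$, matching \eqref{eq:kernel_dSdC}.

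\emph{Main obstacle.} The computations are elementary; the only genuinely delicate point is part (c), where one must keep the scalar and skew parameters straight so that the \emph{same} $\widetilde A$ appears both inside $\Anti(\widetilde A x+\beta x+b)$ and in $\skalarProd{\axl\widetilde A}{x}+\gamma$ — this is precisely the content of forcing $v=\nabla\zeta=\axl(\skew M)$. The one external fact used is the standard statement that a distribution on a connected open set all of whose derivatives of a given order vanish is a polynomial of strictly smaller degree, which is where connectedness of $\Omega$ enters.
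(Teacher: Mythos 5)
Your proof is correct, and the ``if'' directions and part (b) follow essentially the same route as the paper's. The genuine difference is in the ``only if'' direction of parts (a) and (c): the paper equates terms in Nye's formula directly, reduces to $\sym(\D\axl A)\equiv 0$ (resp.\ $\sym(\D(\axl A-\beta x))\equiv 0$), and invokes the rigidity consequence of the classical Korn inequality \eqref{eq:KornQuant} to conclude $\D\axl A$ is a constant skew matrix. You instead apply $\Curl((\,\cdot\,)^T)$ once more — i.e.\ take $\inc$ — use the symmetric/skew decomposition \eqref{eq:inc_id_scalar}--\eqref{eq:inc_id_skew} (resp.\ \eqref{eq:Nye_CurldevCurl}) to deduce $\D^2\zeta\equiv 0$ and $\nabla\tr(\D\axl A)\equiv 0$, and then substitute back into the first-order Nye identity to get $\D\axl A$ constant. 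This trades the Korn rigidity step for one extra differentiation plus the elementary fact that a distribution on a connected domain with vanishing gradient is constant; it also treats all three parts with the same mechanism (apply $\inc$, split sym/skew, substitute back), whereas the paper handles (a) and (c) via Korn and (b) without it. Both are complete arguments, and your identification of the constants $M=\widetilde A$ resp.\ $M=\beta\cdot\id+\widetilde A$ with $v=\axl\widetilde A$ matches \eqref{eq:kernel_dSdC}.
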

}

\begin{proof}
Although the deductions have already been partially indicated in the literature, cf. e.g. \cite[sec. 3.4]{agn_neff2009new} and \cite{Reshetnyak1970,Dain2006tracefree,agn_bauer2013dev,Reshetnyak1994}, we include it here for the sake of completeness. The ``if''-parts are seen by direct calculations, cf. the relations \eqref{eq:Nye-prelim} and \eqref{eq:skalarID}:
\begin{thmenum}
 \item ~ $\Curl(\Anti\big(\widetilde{A}\,x+b\big)+\big(\skalarProd{\axl\widetilde{A}}{x}+\beta \big) {\cdot}\id) = \widetilde{A}-\Anti(\axl \widetilde{A})\equiv0$,
 \item ~ $\dev\Curl(\Anti(\beta\,x+b))=\dev(\tr(\beta\cdot\id)\cdot\id-\beta\cdot\id)=\dev(2\,\beta\cdot\id)\equiv0$,
 \item ~ $\dev\Curl(\Anti\big(\widetilde{A}\,x+\beta\, x+b \big)+\big(\skalarProd{\axl\widetilde{A}}{x}+\gamma \big) {\cdot}\id)$\\\hspace*{2em} $= \dev\big(\widetilde{A} +2\,\beta {\cdot}\id-\Anti(\axl \widetilde{A}) \big)\equiv 0$.
\end{thmenum}
Now, we focus on the ``only if''-directions, starting with
 \[
  \Curl (A +  \zeta\cdot\id) \equiv 0 \quad \overset{\eqref{eq:skalarID}}{\Longleftrightarrow} \quad \Anti(\nabla\zeta) = \Curl A \overset{\eqref{eq:Nye-prelim}}{=} \tr(\D\axl A) {\cdot}\id-(\D\axl A)^T.
 \]
Taking the trace on both sides we obtain \ $\tr(\D\axl A)=0$ \ and consequently
\begin{equation}\label{eq:hilfsKern}
 \Anti(\nabla\zeta) =-(\D\axl A)^T,
\end{equation}
hence  \ $\sym(\D\axl A)=0$. By the classical Korn's inequality \eqref{eq:KornQuant} it follows that there exists a constant skew-symmetric matrix $\widetilde{A}\in\so(3)$ so that \ $\D \axl A \equiv \widetilde{A}$, \ which implies $A=\Anti(\widetilde{A}x+b)$ with $b\in\R^3$. Furthermore, by \eqref{eq:hilfsKern} we obtain
\[
\Anti(\nabla\zeta) = \widetilde{A} \quad \Rightarrow \quad \zeta = \skalarProd{\axl \widetilde{A}}{x}+\beta \quad \text{with } \beta\in\R,
\]
which establishes \ref{kernel_a}.

\medskip

For part \ref{kernel_b} we start with the relation $\dev\Curl A\equiv0$ in \eqref{eq:daraufwirdaxlangewendet} and have
\begin{equation}
 \Anti(\nabla \tr(\D\axl A))\equiv0 \quad \Rightarrow \quad \nabla\tr(\D\axl A)\equiv0,
\end{equation}
so that
\begin{equation}
 \frac13\tr(\D\axl A) =\beta
\end{equation}
for some $\beta\in\R$. Reinserting in the deviatoric counterpart of Nye's formula \eqref{eq:devNye} gives
\begin{equation}
0 = \beta\cdot\id-(\D\axl A)^T \quad\text{resp.}\quad \D\axl A = \beta\cdot\id\quad \Rightarrow\quad \axl A = \beta\, x + b
\end{equation}
for some $b\in\R^3$ and thus ~ $A = \Anti(\beta\, x + b)$.\medskip

Finally, for part \ref{kernel_c}, let now \ $\dev\Curl(A+\zeta\cdot\id)\equiv 0$. \ Then considering the skew-symmetric parts of \eqref{eq:Nye_CurldevCurl} we obtain
\[
\Anti(\nabla\tr(\D\axl A)) \equiv 0 \quad \Rightarrow \quad \nabla\tr(\D\axl A) \equiv 0.
\]
Hence, again
\begin{equation}\label{eq:tr_axl_a_2}
 \frac13\tr(\D\axl A) = \beta
\end{equation}
for some $\beta\in\R$, so that considering the symmetric parts of \eqref{eq:Nye_devCurl} we get
\begin{equation}\label{eq:sym_d_axl}
 0 = \frac13\tr(\D\axl A) {\cdot}\id -\sym(\D\axl A) \overset{\eqref{eq:tr_axl_a_2}}{=} \beta {\cdot}\id - \sym(\D\axl A).
\end{equation}
In other words, we have
\[
\sym(\D(\axl A -\beta\,x)) \equiv 0
\]
and by \eqref{eq:KornQuant},  it follows that \ $\D(\axl A -\beta\,x)$ \ must be a constant skew-symmetric matrix. Thus
\begin{equation}\label{eq:darstellung_axl}
 \axl A = \widetilde{A}\,x  + \beta\, x+ b
\end{equation}
for some $\widetilde{A}\in\so(3)$, $b\in\R^3$ and $\beta\in\R$.  Furthermore, by \eqref{eq:Nye_devCurl} we have
\[
 \Anti(\nabla \zeta) \overset{\eqref{eq:Nye_devCurl}}{=} \skew(\D\axl A) \overset{\eqref{eq:darstellung_axl}}{=} \widetilde{A}
\]
so that $\zeta$ is of the form
\begin{equation}\label{eq:darstellung_zeta}
\zeta = \skalarProd{\axl \widetilde{A}}{x} + \gamma
\end{equation}
for some $\gamma\in\R$, and we arrive at \ref{kernel_c}:
\[
A+\zeta\cdot\id \underset{\eqref{eq:darstellung_zeta}}{\overset{\eqref{eq:darstellung_axl}}{=}} \Anti\big(\widetilde{A}\,x+\beta\, x+b \big)+\big(\skalarProd{\axl\widetilde{A}}{x}+\gamma \big) {\cdot}\id.\qedhere
\]
\end{proof}

We are now prepared to proceed as in the proof of the generalized Korn inequality for incompatible tensor fields {.}

\section{Main results}
We will make use of the Banach space
\begin{subequations}
\begin{align}
  W^{1,\,p}(\Curl; \Omega,\R^{3\times3}) &\coloneqq \{P\in L^p(\Omega,\R^{3\times3})\mid \Curl P \in L^p(\Omega,\R^{3\times3})\}
 \shortintertext{equipped with the norm}
 \norm{P}_{ W^{1,\,p}(\Curl; \Omega,\R^{3\times3})}&\coloneqq \left(\norm{P}^p_{L^p(\Omega,\R^{3\times3})} + \norm{\Curl P}^p_{L^p(\Omega,\R^{3\times3})} \right)^{\frac{1}{p}},
\end{align}
\end{subequations}
as well as its subspace
\begin{align*}
  W^{1,\,p}_0(\Curl; \Omega,\R^{3\times3}) \coloneqq \{P\in  W^{1,\,p}(\Curl; \Omega,\R^{3\times3}) \mid P \times \nu = 0 \text{ on } \partial \Omega\},
\end{align*}
where $\nu$ denotes the outward unit normal vector field to $\partial\Omega$,
and the tangential trace $P\times \nu$ is understood in the sense of $W^{-\frac1p,\, p}(\partial \Omega,\R^{3\times3})$ which is justified by partial integration, so that its trace is defined by
\begin{align}\label{eq:partInt}
 \forall\ Q\in&\  W^{1-\frac{1}{p'},\,p'}(\partial\Omega,\R^{3\times 3}):\quad \skalarProd{P\times (-\nu)}{Q}_{\partial \Omega}=  \int_{\Omega}\skalarProd{\Curl P}{\widetilde{Q}}-\skalarProd{P}{\Curl \widetilde{Q}}\, \intd{x},
\end{align}
where $\widetilde{Q}\in W^{1,\,p'}(\Omega,\R^{3\times3})$ denotes any extension of $Q$ in $\Omega$. Here, $\skalarProd{.}{.}_{\partial\Omega}$ indicates the duality pairing between $W^{-\frac1p,\,p}(\partial\Omega,\R^{3\times3})$ and $W^{1-\frac{1}{p'},\,p'}(\partial\Omega,\R^{3\times 3})$.

However, the appearance of the operator ~ $\dev\Curl$ ~ on the right hand side of our designated results in this paper would suggest to work in
\begin{align}
  W^{1,\,p}(\dev\Curl; \Omega,\R^{3\times3}) &\coloneqq \{P\in L^p(\Omega,\R^{3\times3})\mid \dev\Curl P \in L^p(\Omega,\R^{3\times3})\}
\end{align}
but this is, surprisingly at first glance, not a new space:
\begin{lemma}
 ~ $W^{1,\,p}(\dev\Curl; \Omega,\R^{3\times3}) = W^{1,\,p}(\Curl; \Omega,\R^{3\times3})$.
\end{lemma}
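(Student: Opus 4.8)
The inclusion $W^{1,\,p}(\Curl; \Omega,\R^{3\times3}) \subseteq W^{1,\,p}(\dev\Curl; \Omega,\R^{3\times3})$ is trivial, since $\dev\colon\R^{3\times3}\to\R^{3\times3}$ is a bounded linear projection, so $\Curl P\in L^p$ forces $\dev\Curl P\in L^p$ with $\norm{\dev\Curl P}_{L^p}\le\norm{\Curl P}_{L^p}$. The plan for the (non-trivial) reverse inclusion is as follows. Starting from $P\in L^p(\Omega,\R^{3\times3})$ with $\dev\Curl P\in L^p(\Omega,\R^{3\times3})$, I would use the orthogonal decomposition $\Curl P=\dev\Curl P+\tfrac13\tr(\Curl P)\cdot\id$ to reduce the claim to showing that the scalar distribution $\tr(\Curl P)\in\mathscr D'(\Omega)$ in fact belongs to $L^p(\Omega)$.

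The key observation is that the rows of $\Curl P$ are divergence-free: by the row-wise definition \eqref{eq:Curl_def}, the $i$-th row of $\Curl P$ is the $\curl$ of the $i$-th row of $P$, and $\operatorname{div}\circ\curl\equiv0$ holds in $\mathscr D'$ because distributional derivatives commute. Writing $\operatorname{Div}$ for the row-wise divergence of a matrix field, this reads $\operatorname{Div}(\Curl P)\equiv0$. Inserting the decomposition above and using $\operatorname{Div}(\zeta\cdot\id)=\nabla\zeta$, I obtain the identity
\[
 \nabla\tr(\Curl P) = -3\,\operatorname{Div}\bigl(\dev\Curl P\bigr) \qquad\text{in }\mathscr D'(\Omega,\R^3).
\]

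Since $\dev\Curl P\in L^p(\Omega,\R^{3\times3})$ by hypothesis, its row-wise divergence lies in $W^{-1,\,p}(\Omega,\R^3)$, hence $\nabla\tr(\Curl P)\in W^{-1,\,p}(\Omega,\R^3)$; and $\tr(\Curl P)$ itself already lies in $W^{-1,\,p}(\Omega)$, being a first-order constant-coefficient operator applied to $P\in L^p$. Theorem \ref{th:necas_estimate} (the Lions lemma, applied with $m=0$, $d=1$, $n=3$) then yields $\tr(\Curl P)\in L^p(\Omega)$ together with $\norm{\tr(\Curl P)}_{L^p(\Omega)}\le c\bigl(\norm{P}_{L^p(\Omega,\R^{3\times3})}+\norm{\dev\Curl P}_{L^p(\Omega,\R^{3\times3})}\bigr)$. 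Consequently $\Curl P=\dev\Curl P+\tfrac13\tr(\Curl P)\cdot\id\in L^p(\Omega,\R^{3\times3})$, which gives $P\in W^{1,\,p}(\Curl;\Omega,\R^{3\times3})$ and, by tracking the constants, the norm equivalence $\norm{P}_{W^{1,\,p}(\Curl)}\le c\bigl(\norm{P}_{L^p}+\norm{\dev\Curl P}_{L^p}\bigr)$.

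I do not expect any serious obstacle; the two points deserving a line of justification are that $\operatorname{Div}\circ\Curl\equiv0$ holds distributionally and that $\tr(\Curl P)$ is an admissible input for the Lions lemma, both of which are immediate. Finally, running exactly the same argument with a general $m\in\Z$ in Theorem \ref{th:necas_estimate} in place of $m=0$ produces the stronger equivalence $\Curl P\in W^{m,\,p}(\Omega,\R^{3\times3})\Leftrightarrow\dev\Curl P\in W^{m,\,p}(\Omega,\R^{3\times3})$ announced after \eqref{eq:Korn_Lp_thm_dSdC-intro}.
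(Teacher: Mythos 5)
Your proof is correct, and it takes a genuinely different and cleaner route than the paper's. The paper decomposes $P=S+\Anti(a)$ into symmetric and skew-symmetric parts, invokes Nye's formula \eqref{eq:Nye-prelim_a} to compute $\Curl P$ and $\dev\Curl P$, then takes $\Curl$ of the transposition $[\dev\Curl P]^T$, exploits $\Curl\circ\D\equiv0$, extracts the skew-symmetric part to obtain $\nabla\div a=-3\,\axl\skew\Curl([\dev\Curl P]^T)$, and finally applies the Lions lemma to $\div a$. You instead observe that the rows of $\Curl P$ are divergence-free (a single application of $\operatorname{div}\circ\curl\equiv0$ row-wise), insert the orthogonal decomposition $\Curl P=\dev\Curl P+\tfrac13\tr(\Curl P)\cdot\id$ and use $\operatorname{Div}(\zeta\cdot\id)=\nabla\zeta$ to get the first-order identity $\nabla\tr(\Curl P)=-3\operatorname{Div}(\dev\Curl P)$, and then apply the Lions lemma directly to $\tr(\Curl P)$. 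The two approaches are structurally parallel — each derives a representation of the gradient of a scalar trace-like quantity as a first-order operator of $\dev\Curl P$ and closes with Theorem \ref{th:necas_estimate}, and the underlying scalars agree up to a constant since $\tr(\Curl P)=2\div a$ for $a=\axl\skew P$ — but yours avoids Nye's formula, the sym/skew splitting of $P$, the transposition, and the second application of $\axl$, replacing all of it by the elementary identity $\operatorname{Div}\circ\Curl\equiv0$. Your extension to general $m\in\Z$ via the same argument is also correct and reproduces the more general lemma that the paper proves alongside this one.
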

 It is sufficient to show that the $p$-integrability of $\dev\Curl P$ already implies the $p$-integrability of $\Curl P$, and follows from the general case:
\begin{lemma}
 Let $P\in\mathscr{D}'(\Omega,\R^{3\times3})$. Then we have for all $m\in\Z$ that
 \begin{equation}
  \Curl P\in W^{m,\,p}(\Omega,\R^{3\times3})\quad \Leftrightarrow\quad \dev\Curl P\in W^{m,\,p}(\Omega,\R^{3\times3}).
 \end{equation}
\end{lemma}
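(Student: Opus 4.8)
The plan is to peel off the scalar field $\tau\coloneqq\tr(\Curl P)$. The implication ``$\Rightarrow$'' is immediate: $\dev$ acts on a matrix field by subtracting a fixed constant‑coefficient linear combination of its entries (times the identity), hence it maps $W^{m,\,p}(\Omega,\R^{3\times3})$ into itself. For the converse it suffices to prove that $\dev\Curl P\in W^{m,\,p}$ forces $\tau\in W^{m,\,p}(\Omega,\R)$, because then $\tau\cdot\id\in W^{m,\,p}(\Omega,\R^{3\times3})$ and therefore $\Curl P=\dev\Curl P+\tfrac13\,\tau\cdot\id\in W^{m,\,p}(\Omega,\R^{3\times3})$.

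First I would record the elementary fact that the row‑wise divergence of a matrix $\Curl$ vanishes identically: by \eqref{eq:Curl_def} the $i$‑th row of $\Curl P$ equals $(\curl(P^Te_i))^T$, so it is a curl, and $\operatorname{div}\circ\curl\equiv0$ in $\mathscr{D}'(\Omega)$ because distributional derivatives commute; hence $\sum_{j=1}^3\partial_j(\Curl P)_{ij}=0$ for $i=1,2,3$. Applying the row‑wise divergence to $\dev\Curl P=\Curl P-\tfrac13\,\tau\cdot\id$ then yields, entrywise,
\[
 \partial_i\tau=-3\sum_{j=1}^3\partial_j(\dev\Curl P)_{ij}\qquad(i=1,2,3),
\]
so that $\D\tau$ is a first‑order constant‑coefficient linear operator applied to the entries of $\dev\Curl P$.

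Assuming now $\dev\Curl P\in W^{m,\,p}(\Omega,\R^{3\times3})$, the right‑hand side of that identity lies in $W^{m-1,\,p}(\Omega,\R^3)$, so the distribution $\tau$ satisfies $\D\tau\in W^{m-1,\,p}$; I would then invoke the Lions lemma resp.\ Ne\v{c}as estimate (Theorem \ref{th:necas_estimate}, which holds for every $m\in\Z$) to conclude $\tau\in W^{m,\,p}(\Omega,\R)$, which closes the loop with the first paragraph. Feeding the identity for $\D\tau$ into the quantitative estimate \eqref{eq:necas_m_p} likewise bounds $\norm{\Curl P}_{W^{m,\,p}}$ by the $W^{m,\,p}$‑norm of $\dev\Curl P$ plus lower‑order terms, so the two Banach spaces of the preceding lemma carry equivalent norms.

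There is no genuinely hard step here; the only thing to watch is that $\tau=\tr(\Curl P)$ is \emph{not} an entry of $\dev\Curl P$ — in fact $\tr\dev\Curl P\equiv0$ — so its regularity cannot be read off directly and must be recovered by integrating the gradient relation via the Lions lemma. Everything else (commuting distributional derivatives for $\operatorname{div}\circ\curl\equiv0$, boundedness of $\dev$ on $W^{m,\,p}$) is routine.
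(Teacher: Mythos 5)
Your proof is correct, and it takes a genuinely different (and more economical) route than the paper. The paper decomposes $P=S+\Anti(a)$ into symmetric and skew parts, invokes Nye's formula to write $\Curl P=\Curl S+\div a\cdot\id-(\D a)^T$, then applies $\Curl$ to the transpose of $\dev\Curl P$ and reads off $\nabla\div a$ from the skew-symmetric part of the result (the symmetric part being $\inc S$); the Lions lemma is then applied to $\div a$. You instead apply the row-wise divergence directly to $\dev\Curl P=\Curl P-\tfrac13\tau\cdot\id$, using only the elementary identity $\operatorname{div}\circ\operatorname{curl}\equiv0$ (valid distributionally since mixed partials commute) to kill the $\Curl P$ term and recover $\D\tau=-3\,\operatorname{Div}(\dev\Curl P)$ in one line, then apply the Lions lemma to $\tau=\tr\Curl P$ (which equals $2\div a$ in the paper's notation, so the two routes land on the same scalar up to a factor). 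Your argument avoids the sym/skew decomposition, Nye's formula, and the $\inc$/$\Curl(\cdot^T)$ machinery entirely, at the cost of not making contact with the structural identities the paper is building up for other purposes; the paper's longer route fits more naturally into its overall framework, but yours is the shorter proof of this particular lemma. The quantitative norm estimate you sketch at the end likewise matches the paper's remark on equivalence of norms.
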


\begin{proof}
 We again consider the decomposition of $P$ into its symmetric and skew-symmetric part, i.e.
 \[
P =  S + A = S + \Anti(a) \qquad \text{for some $S\in\Sym(3)$, $A\in\so(3)$ and with $a=\axl(A)$.}
 \]
Then by Nye's formula \eqref{eq:Nye-prelim_a} we have
\begin{equation}\label{eq:Curl}
 \Curl P = \Curl (S+\Anti(a))\overset{\eqref{eq:Nye-prelim}}{=}\Curl S + \div a\cdot \id - (\D a)^T
\end{equation}
and in view of ~ $\tr(\Curl S)=0$ ~ we obtain
\begin{equation}\label{eq:devCurl}
 \dev\Curl P = \Curl S - (\D a)^T + \frac13\div a\cdot \id
\end{equation}
so that taking the $\Curl$ of the transpositions on both sides gives
\begin{equation}
 \Curl([\dev\Curl P]^T) \underset{\eqref{eq:skalarID}}{\overset{\Curl \circ \D \,\equiv 0}{=} } \underset{\in\Sym(3)}{\underbrace{\inc S}} -\frac13 \underset{\in\so(3)}{\underbrace{\Anti(\nabla \div a)}},
\end{equation}
which gives
\begin{equation}\label{eq:Zerlegung_curldecurl}
 \skew \Curl([\dev\Curl P]^T) = -\frac13\Anti(\nabla \div a).
 \end{equation}
Thus, $\dev\Curl P\in  W^{m,\,p}(\Omega,\R^{3\times3})$ implies $\Curl([\dev\Curl P]^T) \in W^{m-1,\,p}(\Omega, \R^{3\times3})$ as well as
\begin{align}
 \skew \Curl([\dev\Curl P]^T)&=\frac12(\Curl([\dev\Curl P]^T)-[\Curl([\dev\Curl P]^T)]^T)\notag\\
 &\in W^{m-1,\,p}(\Omega, \R^{3\times3}),
\end{align}
so that we obtain
\begin{equation}\label{eq:expression_abl_div_axl_skew}
 \nabla \div a \overset{\eqref{eq:Zerlegung_curldecurl}}{=} -3 \axl \skew \Curl([\dev\Curl P]^T) \in W^{m-1,\,p}(\Omega, \R^3).
\end{equation}
Since $a=\axl\skew P\in\mathscr{D}'(\Omega,\R^3)$, we apply Theorem \ref{th:necas_estimate} to $\div a \in\mathscr{D}'(\Omega,\R)$ to conclude from \eqref{eq:expression_abl_div_axl_skew} that  ~ $\div a\in  W^{m,\,p}(\Omega,\R)$. The statement of the Lemma then follows from the decompositions \eqref{eq:Curl} and \eqref{eq:devCurl} which give the expression
 \begin{equation}\label{eq:decompCurldurchdev}
     \Curl P = \dev\Curl P + \frac23\div a\cdot\id \in  W^{m,\,p}(\Omega,\R^{3\times3}).\qedhere
 \end{equation}
\end{proof}

\begin{corollary}
 The classical Hilbert space ~ $H(\Curl;\Omega,\R^{3\times3})$  ~ coincides with the Hilbert space\linebreak $H(\dev \Curl;\Omega,\R^{3\times3})\coloneqq \{P\in L^2(\Omega,\R^{3\times3})\mid \dev\Curl P \in L^2(\Omega,\R^{3\times3})\}.$
\end{corollary}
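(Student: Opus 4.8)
The plan is to recognise this Corollary as nothing more than the $L^2$-instance of the preceding Lemma. First I would unwind the definitions: $H(\Curl;\Omega,\R^{3\times3})$ is by definition the set of $P\in L^2(\Omega,\R^{3\times3})$ with $\Curl P\in L^2(\Omega,\R^{3\times3})$, i.e. it coincides with $W^{1,\,2}(\Curl;\Omega,\R^{3\times3})$, and likewise $H(\dev\Curl;\Omega,\R^{3\times3})=W^{1,\,2}(\dev\Curl;\Omega,\R^{3\times3})$.

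The inclusion $H(\Curl;\Omega,\R^{3\times3})\subseteq H(\dev\Curl;\Omega,\R^{3\times3})$ is immediate, since $\dev\colon\R^{3\times3}\to\R^{3\times3}$ is a bounded linear map, so $\Curl P\in L^2$ forces $\dev\Curl P\in L^2$ with $\norm{\dev\Curl P}_{L^2(\Omega,\R^{3\times3})}\le\norm{\Curl P}_{L^2(\Omega,\R^{3\times3})}$. For the reverse inclusion I would invoke the previous Lemma with the particular choice $p=2$ and $m=0$: for any $P\in\mathscr{D}'(\Omega,\R^{3\times3})$ one has $\dev\Curl P\in W^{0,\,2}(\Omega,\R^{3\times3})=L^2(\Omega,\R^{3\times3})$ if and only if $\Curl P\in L^2(\Omega,\R^{3\times3})$. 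Hence if $P\in L^2(\Omega,\R^{3\times3})$ with $\dev\Curl P\in L^2(\Omega,\R^{3\times3})$, then automatically $\Curl P\in L^2(\Omega,\R^{3\times3})$, so $P\in H(\Curl;\Omega,\R^{3\times3})$.

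Combining the two inclusions gives equality of the two spaces as sets, and the two graph norms are then equivalent — this follows either from the closed graph theorem, or directly from the decomposition \eqref{eq:decompCurldurchdev} specialised to $m=0$, which yields $\Curl P=\dev\Curl P+\tfrac23\div a\cdot\id$ with $a=\axl\skew P$ and $\norm{\div a}_{L^2}$ controlled by $\norm{\dev\Curl P}_{L^2}$. Since the statement is a pure specialisation of an already-established result, there is no real obstacle here; the only point requiring minor care is to match the indices $(m,p)=(0,2)$ in the Lemma and to recall the convention $W^{0,\,p}(\Omega,\R^{3\times3})=L^p(\Omega,\R^{3\times3})$.
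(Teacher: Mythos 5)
Your proof is correct and matches the paper's own (implicit) treatment: the Corollary is presented in the paper as an immediate consequence of the preceding Lemma, which is precisely the specialization to $(m,p)=(0,2)$ that you carry out. The observation that the two graph norms are equivalent (via the closed graph / open mapping theorem, or directly from the decomposition $\Curl P=\dev\Curl P+\tfrac23\div a\cdot\id$) is also made in the paper, albeit in a subsequent remark rather than within the Corollary itself.
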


{
\begin{remark}[Equivalence of norms] In view of \eqref{eq:expression_abl_div_axl_skew} an application of the Lions lemma to $\div a$, with $a=\axl\skew P$, gives us $\div a \in W^{m,\,p}(\Omega,\R)$. Moreover, by the Ne\v{c}as estimate we have
\begin{align}
\norm{\div a}&_{W^{m,\,p}(\Omega,\R)} \le c_1\,(\norm{\div a}_{W^{m-1,\,p}(\Omega,\R)}+ \norm{\nabla \div a}_{W^{m-1,\,p}(\Omega,\R^3)})\notag\\
& \stackrel{\mathclap{(3.10)}}{\le} \ c_2 \,(\norm{\div \axl\skew P}_{W^{m-1,\,p}(\Omega,\R)}+ \norm{\Curl([\dev\Curl P]^T)}_{W^{m-1,\,p}(\Omega,\R^{3\times 3})})\notag\\[1ex]
& \le c_3 \,(\norm{P}_{W^{m,\,p}(\Omega,\R^{3\times 3})}+ \norm{\dev\Curl P}_{W^{m,\,p}(\Omega,\R^{3\times 3})}),
\end{align}
provided that $P\in W^{m,\,p}(\Omega,\R^{3\times 3})$. Together with \eqref{eq:decompCurldurchdev} we conclude:
\begin{equation}
\norm{\Curl P}_{W^{m,\,p}(\Omega,\R^{3\times 3})} \le c_4\, (\norm{P}_{W^{m,\,p}(\Omega,\R^{3\times 3})}+ \norm{\dev\Curl P}_{W^{m,\,p}(\Omega,\R^{3\times 3})})
\end{equation}
as well as
\begin{align}
\norm{P}_{W^{m,\,p}(\Omega,\R^{3\times 3})} + &\norm{\Curl P}_{W^{m,\,p}(\Omega,\R^{3\times 3})}
\le c_5\, (\norm{P}_{W^{m,\,p}(\Omega,\R^{3\times 3})}+ \norm{\dev\Curl P}_{W^{m,\,p}(\Omega,\R^{3\times 3})})
\end{align}
and especially for $m=0$:
\begin{equation}\label{eq:equivNorm}
\norm{P}_{L^p(\Omega,\R^{3\times 3})} + \norm{\Curl P}_{L^p(\Omega,\R^{3\times 3})} \le c_5\, (\norm{P}_{L^p(\Omega,\R^{3\times 3})}+ \norm{\dev\Curl P}_{L^p(\Omega,\R^{3\times 3})})
\end{equation}
for all $P\in W^{1,\,p}(\Curl;\Omega,\R^{3\times3})$.
\footnote{
This result also follows from the open mapping theorem (also known as Banach-Schauder theorem \cite[Thm 5.6-1]{Ciarlet2013FAbook}) in functional analysis. More precisely, the latter provides the following sufficient condition for two norms to be equivalent in an infinite-dimensional space, see \cite[Thm 5.6-4]{Ciarlet2013FAbook}: 
\begin{corollary}
 Let $\norm{.}$ and $\norm{.}'$ be two norms on the same vector space $X$, with the following properties: both spaces $(X,\norm{.})$ and $(X,\norm{.}')$ are complete, and there exists a constant $C$ such that\[ \norm{x}'\le C\,\norm{x} \quad \text{for all $x\in X$.} \]
 Then the two norms $\norm{.}$ and $\norm{.}'$ are equivalent.
 \end{corollary}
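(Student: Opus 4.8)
The plan is to deduce this from the open mapping theorem (Banach--Schauder theorem), applied to the \emph{identity map} regarded as a linear map between the two Banach spaces in question. Concretely, define $\iota\colon (X,\norm{.})\to (X,\norm{.}')$ by $\iota(x)=x$. On the level of sets this is just the identity on $X$, so it is linear and bijective. It is moreover continuous: by hypothesis $\norm{\iota(x)}'=\norm{x}'\le C\,\norm{x}$ for every $x\in X$, so $\iota$ is bounded with operator norm at most $C$.

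By assumption both $(X,\norm{.})$ and $(X,\norm{.}')$ are complete, so the open mapping theorem applies to the continuous linear surjection $\iota$ and yields that $\iota$ is an open map. Equivalently, the inverse map $\iota^{-1}\colon (X,\norm{.}')\to (X,\norm{.})$ --- again the set-theoretic identity --- is continuous. Hence there is a constant $C'>0$ with $\norm{x}=\norm{\iota^{-1}(x)}\le C'\,\norm{x}'$ for all $x\in X$.

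Combining the two estimates gives $\frac{1}{C}\,\norm{x}'\le\norm{x}\le C'\,\norm{x}'$ for every $x\in X$, which is precisely the statement that $\norm{.}$ and $\norm{.}'$ are equivalent norms on $X$. The only point requiring attention --- and it is immediate here --- is the verification of the hypotheses of the open mapping theorem: completeness of both normed spaces (assumed) and the fact that $\iota$ is a continuous linear bijection (which follows at once from the linearity of the identity together with the one-sided bound $\norm{x}'\le C\,\norm{x}$). There is therefore no genuine obstacle; the entire content is the open mapping theorem, invoked in the form already cited in the footnote as \cite[Thm 5.6-1]{Ciarlet2013FAbook}.
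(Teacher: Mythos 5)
Your proof is correct and is exactly the argument the paper has in mind: the footnote attributes the corollary to the open mapping theorem (Banach--Schauder) applied to the identity map, which is precisely what you do. There is nothing more to add.
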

 }
\end{remark}
}

\begin{remark}
 The last identity in \eqref{eq:decompCurldurchdev}, which could also be formally obtained from \eqref{eq:firststepdevPtimesb} with $b=-\nabla$, together with the expression \eqref{eq:expression_abl_div_axl_skew} gives for general matrix field $P\in\mathscr{D}'(\Omega,\R^{3\times3})$:
\begin{equation}\label{eq:lincombi_DCP}
 \D \Curl P = L(\D\, \dev \Curl P).
\end{equation}
Thus, recalling \eqref{eq:lincombi}, we arrive directly at the case \ref{lin_combi_trace_b} of Lemma \ref{lem:lin_combi}.
\end{remark}

\begin{corollary}
Notably, the trace condition in $W^{1,\,p}_ {0}(\dev\Curl; \Omega,\R^{3\times3})$ would read $\dev (P\times \nu) = 0$ on $\partial \Omega$, to be understood by partial integration via
\begin{align}\label{eq:partIntdev}
 \forall\ Q\in  W^{1-\frac{1}{p'},\,p'}(\partial\Omega,\R^{3\times 3}):\quad \skalarProd{\dev(P\times (-\nu))}{Q}_{\partial \Omega} &=  \int_{\Omega}\skalarProd{\dev\Curl P}{\widetilde{Q}}-\skalarProd{P}{\Curl\dev \widetilde{Q}}\, \intd{x}\\
 & = \int_{\Omega}\skalarProd{\Curl P}{\dev\widetilde{Q}}-\skalarProd{P}{\Curl\dev \widetilde{Q}}\, \intd{x} \notag\\
  & \overset{\eqref{eq:partInt}}{=} \skalarProd{P\times (-\nu)}{\dev Q}_{\partial \Omega},\notag
\end{align}
where $\widetilde{Q}\in W^{1,\,p'}(\Omega,\R^{3\times3})$ denotes any extension of $Q$ in $\Omega$. However, it follows from Observation \ref{obs:traces_equal} that the boundary conditions $P\times \nu = 0$ and $\dev(P\times \nu)=0$ on $\partial\Omega$ are the same.
\end{corollary}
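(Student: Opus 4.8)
The plan is to settle the two assertions of the corollary in turn: first the partial–integration identity \eqref{eq:partIntdev} that defines the tangential trace in the $\dev\Curl$–setting, and then the coincidence of the two boundary conditions.

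For \eqref{eq:partIntdev} the only ingredient beyond the already–established formula \eqref{eq:partInt} is that $\dev$ is \emph{self-adjoint} with respect to the Frobenius inner product, i.e.\ $\skalarProd{\dev X}{Y}=\skalarProd{X}{\dev Y}$ for all $X,Y\in\R^{3\times3}$, which is immediate from $\skalarProd{\dev X}{Y}=\skalarProd{X}{Y}-\frac13\tr X\,\tr Y$. Applying this pointwise under the integral sign turns $\skalarProd{\dev\Curl P}{\widetilde Q}$ into $\skalarProd{\Curl P}{\dev\widetilde Q}$, which is exactly the passage from the first to the second line of the display; the third line is then \eqref{eq:partInt} read with the test field $\dev Q$ and the admissible extension $\dev\widetilde Q\in W^{1,p'}(\Omega,\R^{3\times3})$. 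That the left-hand side is independent of the chosen extension $\widetilde Q$ is inherited from the corresponding property of \eqref{eq:partInt} (two extensions differ by an element of $W^{1,p'}_0$, on which the right-hand side vanishes), so $\dev(P\times\nu)\in W^{-\frac1p,\,p}(\partial\Omega,\R^{3\times3})$ is well defined by this formula.

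It remains to show that $P\times\nu=0$ on $\partial\Omega$ if and only if $\dev(P\times\nu)=0$ on $\partial\Omega$. One implication is trivial, since applying $\dev$ to $P\times\nu=0$ yields $\dev(P\times\nu)=0$. For the converse I would use Observation \ref{obs:traces_equal} in its quantitative form: the algebraic identity \eqref{eq:PtbedevPtb} (equivalently \eqref{eq:AtbedevAtb}) writes, for any fixed $b\in\R^3$ with $\norm b=1$, the matrix $P\times b$ as an explicit constant-coefficient linear combination of $\dev(P\times b)$ and $\skalarProd{b}{\dev(P\times b)\,b}\cdot\id$, so that $\norm{\dev(P\times b)}\le\norm{P\times b}\le(1+\sqrt3)\,\norm{\dev(P\times b)}$. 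Since the outward unit normal satisfies $\norm\nu=1$ almost everywhere on the Lipschitz boundary, reading this with $b=\nu$ gives the representation $P\times\nu=\dev(P\times\nu)-\skalarProd{\nu}{\dev(P\times\nu)\,\nu}\cdot\id$, whence $\dev(P\times\nu)\equiv0$ forces $P\times\nu\equiv0$.

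The only genuinely non-formal point, and the step I expect to require the most care, is that $P\times\nu$ is a priori merely a distributional trace in $W^{-\frac1p,\,p}(\partial\Omega,\R^{3\times3})$, so the products with the bounded-but-not-continuous field $\nu$ in the last display are not literally defined. I would make this rigorous as follows: $\dev(P\times\nu)=0$ means $P\times\nu=\frac13\tr(P\times\nu)\cdot\id$ for a scalar trace $\lambda:=\frac13\tr(P\times\nu)\in W^{-\frac1p,\,p}(\partial\Omega)$, while \eqref{eq:crossprod_scalarprod} gives $(P\times\nu)\,\nu=0$ on the trace level, being a consequence of the pointwise algebraic identity $P\,\Anti(\nu)\,\nu=P(\nu\times\nu)=0$ together with the partial-integration characterisation; combining the two yields $\lambda\cdot\nu=0$, and since $\norm\nu=1$ a.e.\ this forces $\lambda=0$, i.e.\ $P\times\nu=0$. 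Alternatively, one may approximate $P$ in $W^{1,p}(\Curl;\Omega,\R^{3\times3})$ by fields smooth up to $\partial\Omega$, for which Observation \ref{obs:traces_equal} applies pointwise to the ordinary traces, and pass to the limit to obtain the equivalence of the $W^{-\frac1p,\,p}(\partial\Omega)$–trace norms of $P\times\nu$ and $\dev(P\times\nu)$.
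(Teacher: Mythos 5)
Your proposal is correct and follows the paper's own route: the identity \eqref{eq:partIntdev} is obtained by the self-adjointness of $\dev$ with respect to the Frobenius inner product together with \eqref{eq:partInt} tested against $\dev Q$, and the equivalence of the two boundary conditions is attributed to Observation~\ref{obs:traces_equal}.

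Worth noting: your final paragraph addresses a genuine subtlety that the paper leaves implicit. Observation~\ref{obs:traces_equal} is a pointwise algebraic statement, while $P\times\nu$ and $\dev(P\times\nu)$ are a priori only elements of $W^{-\frac1p,\,p}(\partial\Omega,\R^{3\times3})$, so multiplying them by the merely bounded field $\nu$ (as in the identity \eqref{eq:PtbedevPtb} read with $b=\nu$) is not literally legitimate. The paper never needs to confront this in full generality: in the proof of Theorem~\ref{thm:main2tracefree} the equivalence is invoked only for a kernel element $P^*\in K_{dS,dC}$, which is a polynomial and hence has a classical boundary trace, so Observation~\ref{obs:traces_equal} can be applied pointwise. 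Your approximation argument (smooth fields up to the boundary, then passing to the limit in the trace norms) is the cleanest way to upgrade Observation~\ref{obs:traces_equal} to the distributional trace setting and would make the corollary self-contained; the first of your two rigorizations still leans on giving meaning to $\tr(P\times\nu)$ and $(P\times\nu)\,\nu$ on the boundary, which is essentially the same difficulty in a different guise.
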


\begin{lemma}\label{lem:basic2}
  Let $\Omega \subset \R^3$ be a bounded Lipschitz domain, $1<p<\infty$ and $P\in\mathscr{D}'(\Omega,\R^{3\times3})$. Then either of the conditions
  \begin{thmenum}
  \item $\dev\sym P\in L^p(\Omega,\R^{3\times3})$ and $\Curl P \in W^{-1,\,p}(\Omega,\R^{3\times3})$, \label{condition_a_dev}
  \item $\sym P\in L^p(\Omega,\R^{3\times3})$ and $\dev\Curl P \in W^{-1,\,p}(\Omega,\R^{3\times3})$, \label{condition_b_dev}
  \item $\dev\sym P\in L^p(\Omega,\R^{3\times3})$ and $\dev\Curl P \in W^{-1,\,p}(\Omega,\R^{3\times3})$,\label{condition_c_dev}
  \end{thmenum}
implies $P\in L^p(\Omega,\R^{3\times3})$. Moreover, we have the corresponding estimates
\begin{subequations}
  \begin{align}
  \norm{P}_{L^p(\Omega,\R^{3\times3})} &\leq c\, \Big(\norm{\skew P+\textstyle\frac13\tr P\cdot \id}_{W^{-1,\,p}(\Omega,\R^{3\times3})}\notag\\
  &\hspace{4em}+ \norm{\dev\sym P}_{L^p(\Omega,\R^{3\times3})}+ \norm{ \Curl P }_{W^{-1,\,p}(\Omega,\R^{3\times3})}\Big),\label{eq:from_a_dev}\\
  \norm{P}_{L^p(\Omega,\R^{3\times3})} &\leq c\, \Big(\norm{\skew P}_{W^{-1,\,p}(\Omega,\R^{3\times3})}\notag\\
  &\hspace{4em}+ \norm{\sym P}_{L^p(\Omega,\R^{3\times3})}+ \norm{\dev \Curl P }_{W^{-1,\,p}(\Omega,\R^{3\times3})}\Big),\label{eq:from_b_dev}\\
   \norm{P}_{L^p(\Omega,\R^{3\times3})} &\leq c\, \Big(\norm{\skew P+\textstyle\frac13\tr P\cdot \id}_{W^{-1,\,p}(\Omega,\R^{3\times3})}\notag\\
  &\hspace{4em}+ \norm{\dev\sym P}_{L^p(\Omega,\R^{3\times3})}+ \norm{\dev \Curl P }_{W^{-1,\,p}(\Omega,\R^{3\times3})}\Big),\label{eq:from_c_dev}
  \end{align}
 \end{subequations}
  each with a constant $c=c(p,\Omega)>0$.
\end{lemma}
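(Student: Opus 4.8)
The plan is to reduce all three cases to the higher-order Lions lemma (Corollary~\ref{cor:LionsNecas_k}) applied to the skew-symmetric-plus-spherical part of $P$, with the differential relations of Lemma~\ref{lem:lin_combi} bridging that part and its (deviatoric) $\Curl$. First I would set
\[
 P = \dev\sym P + \big(A + \zeta\cdot\id\big),\qquad A\coloneqq\skew P\in\mathscr{D}'(\Omega,\so(3)),\quad \zeta\coloneqq\tfrac13\tr P\in\mathscr{D}'(\Omega,\R),
\]
so that $A+\zeta\cdot\id = \skew P+\tfrac13\tr P\cdot\id = P-\dev\sym P$. In case~\ref{condition_b_dev} the spherical part $\zeta\cdot\id$ already sits inside $\sym P\in L^p$, so it is enough to bound $\norm{A}_{L^p}$; in cases~\ref{condition_a_dev} and~\ref{condition_c_dev} it is enough to bound $\norm{A+\zeta\cdot\id}_{L^p}$. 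Adding back $\norm{\sym P}_{L^p}$, respectively $\norm{\dev\sym P}_{L^p}$, then yields $\norm{P}_{L^p}$, so the whole matter is the $L^p$-regularity of $A$, respectively $A+\zeta\cdot\id$.

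For the core step, using linearity of $\Curl$ and $\dev$ and the boundedness of $\Curl$ as a first-order constant-coefficient operator $L^p\to W^{-1,\,p}$, I would transfer each hypothesis to the relevant incompatibility: $\Curl(A+\zeta\cdot\id)=\Curl P-\Curl(\dev\sym P)\in W^{-1,\,p}$ in case~\ref{condition_a_dev}; $\dev\Curl A=\dev\Curl P-\dev\Curl(\sym P)\in W^{-1,\,p}$ in case~\ref{condition_b_dev}; and $\dev\Curl(A+\zeta\cdot\id)=\dev\Curl P-\dev\Curl(\dev\sym P)\in W^{-1,\,p}$ in case~\ref{condition_c_dev}, each controlled by $\norm{\Curl P}_{W^{-1,\,p}}+\norm{\dev\sym P}_{L^p}$, resp.\ $\norm{\dev\Curl P}_{W^{-1,\,p}}+\norm{\sym P}_{L^p}$, resp.\ $\norm{\dev\Curl P}_{W^{-1,\,p}}+\norm{\dev\sym P}_{L^p}$. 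Lemma~\ref{lem:lin_combi}, applied with the skew-symmetric distribution $A$ and the scalar distribution $\zeta$, then provides
\[
 \D^2(A+\zeta\cdot\id)=L(\D\Curl(A+\zeta\cdot\id)),\qquad \D^2 A=L(\D\dev\Curl A),\qquad \D^3(A+\zeta\cdot\id)=L(\D^2\dev\Curl(A+\zeta\cdot\id)),
\]
whence the left-hand sides lie in $W^{-2,\,p}$, $W^{-2,\,p}$, $W^{-3,\,p}$, respectively. Corollary~\ref{cor:LionsNecas_k} with $(k,m)=(2,0),(2,0),(3,0)$ then yields $A+\zeta\cdot\id\in L^p$ (resp.\ $A\in L^p$) together with $\norm{A+\zeta\cdot\id}_{L^p}\le c\,(\norm{A+\zeta\cdot\id}_{W^{-1,\,p}}+\norm{\D^k(A+\zeta\cdot\id)}_{W^{-k,\,p}})$, and chaining the earlier bounds (the operator $L$ being bounded on the Sobolev scale) closes \eqref{eq:from_a_dev}, \eqref{eq:from_b_dev}, \eqref{eq:from_c_dev}. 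One may also note that~\ref{condition_c_dev} is the strongest case: since $\Curl P\in W^{-1,\,p}\Leftrightarrow\dev\Curl P\in W^{-1,\,p}$ holds for arbitrary distributions $P$ and $\dev$ is bounded, both~\ref{condition_a_dev} and~\ref{condition_b_dev} reduce to~\ref{condition_c_dev} and their estimates can be read off from \eqref{eq:from_c_dev}.

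The only delicate point is the bookkeeping of negative Sobolev orders: one must line up the order of each relation in Lemma~\ref{lem:lin_combi} with the index $k$ in Corollary~\ref{cor:LionsNecas_k} so that $m=0$ in all three cases, and one must invoke the version of the Lions lemma that presupposes nothing about $P$ beyond $P\in\mathscr{D}'$ --- so that the terms $\norm{\skew P+\tfrac13\tr P\cdot\id}_{W^{-1,\,p}}$ and $\norm{\skew P}_{W^{-1,\,p}}$ on the right-hand sides are finite only a posteriori, once $P\in L^p$ is known. Note finally that $\dev\sym P$ enters only through its $L^p$-membership (via $\Curl\colon L^p\to W^{-1,\,p}$), not through being symmetric.
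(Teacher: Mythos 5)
Your proposal is correct and follows essentially the same path as the paper: the same orthogonal decomposition $P = \dev\sym P + (\skew P + \tfrac13\tr P\cdot\id)$, the same bridging relations from Lemma~\ref{lem:lin_combi} (parts (a),(b),(c) matched to the three hypotheses), and the same application of the higher-order Lions lemma (Corollary~\ref{cor:LionsNecas_k}) with $k=2,2,3$ respectively. The only organizational difference is that the paper proves \ref{condition_b_dev} and \ref{condition_c_dev} directly, then observes that \ref{condition_a_dev} follows from \ref{condition_c_dev} (relegating the independent $k=2$ proof of \ref{condition_a_dev} to a remark), whereas you present all three direct arguments and additionally note that both \ref{condition_a_dev} and \ref{condition_b_dev} can be recovered from \ref{condition_c_dev}; both organizations are fine.
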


 \begin{proof}
 We start by proving part \ref{condition_b_dev}. For that purpose we will follow the proof of \cite[Lemma 3.1]{agn_lewintan2019KornLp}. Thus, for part \ref{condition_b_dev} it remains to deduce that $\skew P\in L^p(\Omega,\R^{3\times3})$. We have
 \begin{align}
  \norm{\D^2\skew P}_{W^{-2,\,p}(\Omega,\R^{ {3\times3^3}})}\quad&\overset{\mathclap{\text{Lem. \ref{lem:lin_combi}\ref{lin_combi_trace_b}}}}{\le}\quad c\,   \norm{\D \dev \Curl\skew P}_{W^{-2,\,p}(\Omega,\R^{ {3\times3^2}})}\notag\\
  &\le c\, \norm{\dev \Curl(P-\sym P)}_{W^{-1,\,p}(\Omega,\R^{3\times3})}\notag\\
  & \le c\,( \norm{\dev \Curl P }_{W^{-1,\,p}(\Omega,\R^{3\times3})} + \norm{\Curl \sym P}_{W^{-1,\,p}(\Omega,\R^{3\times3})})\notag\\
  & \le c\,( \norm{\sym P}_{L^p(\Omega,\R^{3\times3})}+ \norm{\dev \Curl P }_{W^{-1,\,p}(\Omega,\R^{3\times3})}).\label{eq:d2skew_hilfs}
 \end{align}
Hence, the assumptions of part \ref{condition_b_dev} yield $\D^2\skew P\in W^{-2,\,p}(\Omega,\R^{ {3\times3^3}})$, so that, by Corollary \ref{cor:LionsNecas_k}, we obtain $\skew P\in L^p(\Omega,\R^{3\times3})$ and moreover the estimate
\begin{align}
 \norm{\skew P}&_{L^p(\Omega,\R^{3\times3})} \leq c\, (\norm{\skew P}_{W^{-1,\,p}(\Omega,\R^{3\times3})}+\norm{\D^2\skew P}_{W^{-2,\,p}(\Omega,\R^{ {3\times3^3}})})\notag\\
 & \overset{\eqref{eq:d2skew_hilfs}}{\le}c\, \Big(\norm{\skew P}_{W^{-1,\,p}(\Omega,\R^{3\times3})}
 + \norm{\sym P}_{L^p(\Omega,\R^{3\times3})}+ \norm{\dev \Curl P }_{W^{-1,\,p}(\Omega,\R^{3\times3})}\Big).
\end{align}
Then by adding $\norm{\sym P}_{L^p(\Omega,\R^{3\times3})}$ on both sides we obtain \eqref{eq:from_b_dev}.\medskip

 Clearly, the conclusion of \ref{condition_a_dev} as well as the estimate \eqref{eq:from_a_dev} follow from \ref{condition_c_dev} and \eqref{eq:from_c_dev}, respectively. To establish \ref{condition_c_dev}, we make use of the orthogonal decomposition {$P = \dev\sym P + (\skew P + \textstyle\frac13\tr P \cdot \id)$.}
Then, to obtain $\skew P + \textstyle\frac13\tr P \cdot \id\in L^p(\Omega,\R^{3\times3})$ for \ref{condition_c_dev}, we consider
\begin{align}\label{eq:D2curlskew+tr}
&\hspace{-3em}\norm{\D^2\dev\Curl (\skew P+ \textstyle\frac13\tr P \cdot \id)}_{ W^{-3,\,p}(\Omega,\R^{3\times3^3})} \notag\\
&\leq c\, \norm{\dev\Curl (P-\dev\sym P)}_{ W^{-1,\,p}(\Omega,\R^{3\times3})}\notag\\
&\le c\,(\norm{\dev\Curl P}_{ W^{-1,\,p}(\Omega,\R^{3\times3})} + \norm{\Curl \dev\sym P}_{ W^{-1,\,p}(\Omega,\R^{3\times3})}) \notag \\
&\leq c \,(\norm{\dev\Curl P}_{ W^{-1,\,p}(\Omega,\R^{3\times3})} + \norm{\dev\sym P}_{ L^p(\Omega,\R^{3\times3})}).
\end{align}
Therefore, $\D^2\dev\Curl (\skew P+ \textstyle\frac13\tr P \cdot \id)\in W^{-3,\,p}(\Omega,\R^{3\times3^3})$ \ follows from the assumptions of \ref{condition_c_dev} and Lemma \ref{lem:lin_combi} \ref{lin_combi_trace_c}  implies
\begin{equation}
\D^3 (\skew P+ \textstyle\frac13\tr P \cdot \id)\in W^{-3,\,p}(\Omega,\R^{3\times3^4})\,.
\end{equation}
Applying Corollary \ref{cor:LionsNecas_k} again, this time to $\skew P+ \textstyle\frac13\tr P \cdot \id$, we arrive at $\skew P + \textstyle\frac13\tr P \cdot \id\in L^p(\Omega,\R^{3\times3})$ and, moreover,
\begin{align}\label{eq:Norm_c}
 \norm{\skew P+\textstyle\frac13\tr P \cdot \id}_{L^p(\Omega,\R^{3\times 3})}&\le c\,
 \Big(\norm{\skew P+\textstyle\frac13\tr P\cdot \id}_{W^{-1,\,p}(\Omega,\R^{3\times3})}\notag\\
  &\qquad\ +\norm{\D^3(\skew P+\textstyle\frac13\tr P\cdot \id)}_{W^{-3,\,p}(\Omega,\R^{3\times3^4})} \Big)\notag\\
  &\overset{\mathclap{\text{Lem. \ref{lem:lin_combi} \ref{lin_combi_trace_c}}}}{\leq}\quad  c\,\Big(\norm{\skew P+\textstyle\frac13\tr P\cdot \id}_{W^{-1,\,p}(\Omega,\R^{3\times3})}\\
  &\quad\qquad\ +\norm{\D^2\dev\Curl(\skew P+\textstyle\frac13\tr P\cdot \id)}_{W^{-3,\,p}(\Omega,\R^{3\times3^3})} \Big)\notag\\
  &\overset{\mathclap{\eqref{eq:D2curlskew+tr}}}{\leq} \ c\,
 \Big(\norm{\skew P+\textstyle\frac13\tr P\cdot \id}_{W^{-1,\,p}(\Omega,\R^{3\times3})}\notag\\
  &\ \qquad\ +\norm{\dev\sym P}_{L^p(\Omega,\R^{3\times3})}+ \norm{ \dev\Curl P }_{W^{-1,\,p}(\Omega,\R^{3\times3})}\Big).\notag\qedhere
\end{align}
 \end{proof}
 \begin{remark}
 Of course, part \ref{condition_a_dev} can also be proven independently of part \ref{condition_c_dev}. Indeed, using Lemma \ref{lem:lin_combi} \ref{lin_combi_trace_a} we obtain
 \begin{align}\label{eq:curlskew+tr}
 \norm{\D^2(\skew P+\textstyle\frac13\tr P\cdot \id)}_{W^{-2,\,p}(\Omega,\R^{ {3\times3^3}})}
 &\overset{\mathclap{\text{Lem. \ref{lem:lin_combi} \ref{lin_combi_trace_a}}}}{\leq}\quad  c\,
\norm{\D\Curl (\skew P+ \textstyle\frac13\tr P \cdot \id)}_{ W^{-2,\,p}(\Omega,\R^{ {3\times3^2}})}\notag\\
& \leq c\, \norm{\Curl(P-\dev\sym P)}_{ W^{-1,\,p}(\Omega,\R^{3\times3})} \notag \\
&\leq c \,(\norm{\Curl P}_{ W^{-1,\,p}(\Omega,\R^{3\times3})} + \norm{\dev\sym P}_{ L^p(\Omega,\R^{3\times3})})
\end{align}
and the conclusion follows from an application of Corollary \ref{cor:LionsNecas_k} to $\skew P+ \textstyle\frac13\tr P \cdot \id$.
 \end{remark}

 The rigidity results now follow by elimination of the corresponding first term on the right-hand side.

\begin{theorem}\label{thm:main1tf}
 Let $\Omega \subset \R^3$ be a bounded Lipschitz domain and $1<p<\infty$. There exists a constant $c=c(p,\Omega)>0$ such that for all $P\in { L^p}(\Omega,\R^{3\times3})$ we have
  \begin{subequations}
 \begin{align}
   \inf_{T\in K_{dS,C}}\norm{P-T}_{L^p(\Omega,\R^{3\times3})}
   &\leq c\,\left(\norm{\dev \sym P }_{L^p(\Omega,\R^{3\times3})}+ \norm{ \Curl P }_{W^{-1,\,p}(\Omega,\R^{3\times3})}\right)\,,\label{eq:rigid_Dev_a}\\
   \inf_{T\in K_{S,dC}}\norm{P-T}_{L^p(\Omega,\R^{3\times3})}
   &\leq c\,\left(\norm{\sym P }_{L^p(\Omega,\R^{3\times3})}+ \norm{\dev \Curl P }_{W^{-1,\,p}(\Omega,\R^{3\times3})}\right)\,,\label{eq:rigid_Dev_b}\\
   \inf_{T\in K_{dS,dC}}\norm{P-T}_{L^p(\Omega,\R^{3\times3})}
   &\leq c\,\left(\norm{\dev \sym P }_{L^p(\Omega,\R^{3\times3})}+ \norm{\dev \Curl P }_{W^{-1,\,p}(\Omega,\R^{3\times3})}\right)\,,\label{eq:rigid_Dev_c}
 \end{align}
\end{subequations}
 where the kernels are given, respectively, by
 \begin{subequations}
 \begin{align}
  K_{dS,C}&=\{T:\Omega\to\R^{3\times3}\mid T(x)=\Anti(\widetilde{A}\,x+b)+(\skalarProd{\axl \widetilde{A}}{x}+\beta) {\cdot}\id, 
  \widetilde{A}\in\so(3), b\in\R^3, \beta\in\R \} {,}\label{eq:kernel_dSC_thm}\\
   K_{S,dC} &= \{T:\Omega\to\R^{3\times3}\mid  T(x)=\Anti(\beta\,x+b), \ b\in\R^3, \beta\in\R\}, \label{eq:kernel_SdC_thm}\\
    K_{dS,dC} &= \{T:\Omega\to\R^{3\times3}\mid  T(x)=\Anti\big(\widetilde{A}\,x+\beta\, x+b \big)+\big(\skalarProd{\axl\widetilde{A}}{x}+\gamma \big) {\cdot}\id, \notag \\ &\hspace{10em} \widetilde{A}\in\so(3), b\in\R^3, \beta,\gamma\in\R\}.\label{eq:kernel_dSdC_thm}
 \end{align}
\end{subequations}
\end{theorem}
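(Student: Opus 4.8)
\textit{Strategy.} The plan is to obtain Theorem \ref{thm:main1tf} from Lemma \ref{lem:basic2} by the standard device of eliminating the ``nuisance'' lower-order term on the right-hand side through a compactness argument, and then to identify the resulting kernels via Lemma \ref{lem:Kernel}. I treat \eqref{eq:rigid_Dev_c} in detail; the cases \eqref{eq:rigid_Dev_a} and \eqref{eq:rigid_Dev_b} are then obtained by the very same scheme, starting from \eqref{eq:from_a_dev} resp.\ \eqref{eq:from_b_dev}. First I would rewrite \eqref{eq:from_c_dev} in a more convenient form: since $P = \dev\sym P + (\skew P + \tfrac13\tr P\cdot\id)$ and $L^p(\Omega)\hookrightarrow W^{-1,p}(\Omega)$ continuously on the bounded domain $\Omega$, the term $\norm{\skew P+\tfrac13\tr P\cdot\id}_{W^{-1,\,p}}$ is dominated by $\norm{P}_{W^{-1,\,p}}+c\,\norm{\dev\sym P}_{L^p}$, so that \eqref{eq:from_c_dev} yields
\[
 \norm{P}_{L^p(\Omega,\R^{3\times3})}\le c\,\bigl(\norm{P}_{W^{-1,\,p}(\Omega,\R^{3\times3})}+\norm{\dev\sym P}_{L^p(\Omega,\R^{3\times3})}+\norm{\dev\Curl P}_{W^{-1,\,p}(\Omega,\R^{3\times3})}\bigr)
\]
for all $P\in L^p(\Omega,\R^{3\times3})$; analogously \eqref{eq:from_a_dev} and \eqref{eq:from_b_dev} become estimates of the same shape with $(\dev\sym P,\Curl P)$ resp.\ $(\sym P,\dev\Curl P)$ in place of $(\dev\sym P,\dev\Curl P)$.

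\textit{Compactness step.} Consider the bounded linear operator $\mathcal A\colon L^p(\Omega,\R^{3\times3})\to L^p(\Omega,\R^{3\times3})\times W^{-1,\,p}(\Omega,\R^{3\times3})$, $P\mapsto(\dev\sym P,\dev\Curl P)$, together with the inclusion $\iota\colon L^p(\Omega,\R^{3\times3})\hookrightarrow W^{-1,\,p}(\Omega,\R^{3\times3})$, which is compact — it is the adjoint of the compact Rellich--Kondrachov embedding $W^{1,\,p'}_0(\Omega)\hookrightarrow L^{p'}(\Omega)$, hence compact by Schauder's theorem. The rewritten estimate reads $\norm{P}_{L^p}\le c\,(\norm{\mathcal A P}+\norm{\iota P})$, so the Peetre--Tartar lemma applies: $\ker\mathcal A$ is finite dimensional, $\operatorname{ran}\mathcal A$ is closed, and there is $c>0$ with
\[
 \inf_{T\in\ker\mathcal A}\norm{P-T}_{L^p(\Omega,\R^{3\times3})}\le c\,\norm{\mathcal A P}=c\,\bigl(\norm{\dev\sym P}_{L^p(\Omega,\R^{3\times3})}+\norm{\dev\Curl P}_{W^{-1,\,p}(\Omega,\R^{3\times3})}\bigr).
\]
Equivalently, and without the abstract lemma, one argues by contradiction: if the claimed inequality fails one finds $P_k$ with $\inf_{T\in K_{dS,dC}}\norm{P_k-T}_{L^p}=1$ but $\norm{\dev\sym P_k}_{L^p}+\norm{\dev\Curl P_k}_{W^{-1,\,p}}\to0$; subtracting a minimiser $T_k\in K_{dS,dC}$ — which exists because $K_{dS,dC}$ is a finite-dimensional, hence closed, subspace of affine tensor fields, and which changes neither $\dev\sym P_k$ nor $\dev\Curl P_k$ nor the infimum — we may assume $\norm{P_k}_{L^p}=1$; boundedness in $L^p$ and compactness of $\iota$ give, along a subsequence, $P_k\to P$ in $W^{-1,\,p}$, and inserting the differences $P_k-P_l$ into the rewritten estimate upgrades this to $P_k\to P$ in $L^p$; hence $\norm{P}_{L^p}=1$ while $\dev\sym P=0$ and $\dev\Curl P=0$, i.e.\ $P\in\ker\mathcal A$, contradicting $\inf_{T\in K_{dS,dC}}\norm{P_k-T}_{L^p}=1$.

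\textit{Identification of the kernel.} It remains to see $\ker\mathcal A=K_{dS,dC}$ as in \eqref{eq:kernel_dSdC_thm}. If $\dev\sym P=0$ then $\sym P=\tfrac13\tr P\cdot\id$, so $P=A+\zeta\cdot\id$ with $A\coloneqq\skew P\in\mathscr{D}'(\Omega,\so(3))$ and $\zeta\coloneqq\tfrac13\tr P\in\mathscr{D}'(\Omega,\R)$; the remaining condition $\dev\Curl P=\dev\Curl(A+\zeta\cdot\id)=0$ then places $P$ exactly in the set \eqref{eq:kernel_dSdC_thm} by Lemma \ref{lem:Kernel}\ref{kernel_c}, and conversely each element of $K_{dS,dC}$ is an affine tensor field in $L^p(\Omega,\R^{3\times3})$ with $\dev\sym\equiv0$, $\dev\Curl\equiv0$. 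This proves \eqref{eq:rigid_Dev_c}. For \eqref{eq:rigid_Dev_a} one uses $\mathcal A\colon P\mapsto(\dev\sym P,\Curl P)$ and identifies $\ker\mathcal A$ via Lemma \ref{lem:Kernel}\ref{kernel_a} (again through $\dev\sym P=0\Leftrightarrow P=\skew P+\tfrac13\tr P\cdot\id$) with $K_{dS,C}$ of \eqref{eq:kernel_dSC_thm}; for \eqref{eq:rigid_Dev_b} one uses $\mathcal A\colon P\mapsto(\sym P,\dev\Curl P)$ together with $\sym P=0\Leftrightarrow P=\skew P\in\mathscr{D}'(\Omega,\so(3))$ and Lemma \ref{lem:Kernel}\ref{kernel_b} to get $K_{S,dC}$ of \eqref{eq:kernel_SdC_thm}.

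\textit{Main obstacle.} The genuinely substantial work has already been carried out in Lemma \ref{lem:lin_combi}, Lemma \ref{lem:basic2} and Lemma \ref{lem:Kernel}; the present step is soft. The only points requiring care are the compactness of the embedding $L^p(\Omega)\hookrightarrow W^{-1,\,p}(\Omega)$ on a bounded Lipschitz domain — which follows by duality from Rellich--Kondrachov and Schauder's theorem on adjoints of compact operators — and the (harmless) replacement of $\norm{\skew P+\tfrac13\tr P\cdot\id}_{W^{-1,\,p}}$ resp.\ $\norm{\skew P}_{W^{-1,\,p}}$ in Lemma \ref{lem:basic2} by $\norm{P}_{W^{-1,\,p}}$, so that the three ``nuisance'' operators all coincide with the compact inclusion $\iota$.
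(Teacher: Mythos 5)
Your proof is correct and follows the same overall strategy as the paper: start from Lemma~\ref{lem:basic2}, eliminate the lower-order $W^{-1,\,p}$-term by exploiting the compact embedding $L^p(\Omega)\subset\!\subset W^{-1,\,p}(\Omega)$, and then identify the resulting kernel via Lemma~\ref{lem:Kernel}. The one place where the mechanics visibly differ is the elimination step. The paper follows the Ciarlet-style scheme: choose a basis $e_1,\dots,e_M$ of the kernel, extend the dual basis $\ell_\alpha$ by Hahn--Banach to all of $L^p$, prove by contradiction a strengthened estimate $\norm{P}_{L^p}\le c\,(\norm{\dev\sym P}_{L^p}+\norm{\Curl P}_{W^{-1,\,p}}+\sum_\alpha|\ell_\alpha(P)|)$, and then apply it to $P-\pi(P)$ with $\pi(P)=\sum_j \ell_j(P)\,e_j$. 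You instead replace the specific ``nuisance'' term $\norm{\skew P+\tfrac13\tr P\cdot\id}_{W^{-1,\,p}}$ by the full $\norm{P}_{W^{-1,\,p}}$, so that all three estimates of Lemma~\ref{lem:basic2} feed a single compact operator $\iota$, and then invoke the Peetre--Tartar lemma (or, equivalently, subtract a distance minimiser in the finite-dimensional kernel and run a Cauchy-sequence contradiction). This buys a unified, Hahn--Banach-free treatment, at the small cost of having to verify that the replacement does not degrade the estimate (it doesn't: $\skew P+\tfrac13\tr P\cdot\id=P-\dev\sym P$ and $L^p\hookrightarrow W^{-1,\,p}$). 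Your duality argument for the compactness of $\iota$ (Rellich--Kondrachov plus Schauder's theorem on adjoints) is also a correct, slightly more explicit justification than the paper, which simply asserts the embedding is compact. In short: same backbone, different but equally valid packaging of the compactness step.
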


\begin{proof}
 We proceed as in the proof of Korn's inequalities \eqref{eq:Korn_Lp_w} resp.\ \eqref{eq:KornQuant}, see \cite[Theorem 3.3]{agn_lewintan2019KornLp} resp.\ \cite[Theorem 6.15-3]{Ciarlet2013FAbook}, and start by characterizing the kernel of the right-hand side,
 \begin{align*}
   K_{dS,C} \coloneqq \{P\in { L^p}(\Omega,\R^{3\times3}) \mid \ & \dev \sym P = 0 \text{ a.e. and }\\&  \Curl P= 0 \text{ in the distributional sense}\},
 \end{align*}
so that $P\in K_{dS,C}$ if and only if ~ $P=\skew P + \frac13\tr P\cdot\id$ ~ and ~ $\Curl(\skew P + \frac13\tr P\cdot\id)\equiv 0$. Hence, \eqref{eq:kernel_dSC_thm} follows by virtue of Lemma \ref{lem:Kernel} \ref{kernel_a}.

Let us denote by $e_1,\ldots,e_M$ a basis of $K_{dS,C}$, where $M\coloneqq \dim  K_{dS,C} = 7$, and by $\ell_1, \ldots, \ell_M$ the corresponding continuous linear forms on $K_{dS,C}$ given by
\begin{equation}\label{eq:basisKern}
 \ell_\alpha(e_j)\coloneqq \delta_{\alpha j}.
\end{equation}
By the Hahn-Banach theorem in a normed vector space (see e.g. \cite[Theorem 5.9-1]{Ciarlet2013FAbook}), we extend $\ell_\alpha$ to continuous linear forms - again denoted by $\ell_\alpha$ - on the Banach space ${ L^p}(\Omega,\R^{3\times3})$, $1\le\alpha\le M$. Notably,
\[
 T\in K_{dS,C} \text{ is equal to $0$ } \quad \Leftrightarrow \quad \ell_\alpha(T)= 0 \ \forall\ \alpha\in\{1,\ldots,M\}.
\]
Following the proof of \cite[Theorem 3.4]{agn_lewintan2019KornLp} we eliminate the first term on the right-hand side of \eqref{eq:from_a_dev} by exploiting the compactness $L^p(\Omega,\R^{3\times 3})\subset\!\subset  W^{-1,\,p}(\Omega,\R^{3\times 3})$  and arrive at
\begin{align}\label{eq:hilfsungl}
 \norm{P}_{L^p(\Omega,\R^{3\times 3})}\leq c\,\left(\norm{\dev \sym P }_{L^p(\Omega,\R^{3\times 3})}+ \norm{ \Curl P }_{W^{-1,\,p}(\Omega,\R^{3\times 3})}+\sum_{\alpha=1}^M\abs{\ell_\alpha(P)} \right).
\end{align}
 {Indeed, if \eqref{eq:hilfsungl} were false, there would exist a sequence $P_k\in L^p(\Omega,\R^{3\times3})$ such that
\begin{align*}
 \norm{P_k}_{L^p(\Omega,\R^{3\times3})}=1
 \quad \text{and}\quad \left(\norm{\dev \sym P_k }_{L^p(\Omega,\R^{3\times 3})}+ \norm{ \Curl P_k }_{W^{-1,\,p}(\Omega,\R^{3\times 3})}+\sum_{\alpha=1}^M\abs{\ell_\alpha(P_k)} \right)<\frac1k.
\end{align*}
Thus, for a subsequence $P_k\rightharpoonup P^*$ in $L^p(\Omega,\R^{3\times3}$ with $\dev\sym P^*=0$ a.e., $\sym\Curl P^*=0$ in the distributional sense and $\ell_\alpha(P_k)=0$ for all $\alpha=1,\ldots,M$, so that $P^*=0$ a.e.. By the compact embedding $L^p(\Omega,\R^{3\times 3})\subset\!\subset  W^{-1,\,p}(\Omega,\R^{3\times 3})$ there exists a subsequence $P_k$, so that ~ $\skew P_k+\frac13\tr P_k\cdot\id\to0$~ in $ W^{-1,\,p}(\Omega,\R^{3\times 3})$. This is a contradiction to \eqref{eq:from_a_dev}.

}
Considering now the projection ~ $\pi_{a}:{ L^p}(\Omega,\R^{3\times 3}) \to K_{dS,C}$ ~ given by
\begin{equation}\label{eq:projection}
 \pi_{a}(P)\coloneqq \sum_{j=1}^M \ell_j(P)\, e_j
\end{equation}
we obtain $\ell_\alpha(P-\pi_{a}(P)) \overset{\eqref{eq:basisKern}}{=} 0$ for all $1\le\alpha\le M$, so that \eqref{eq:rigid_Dev_a} follows after applying \eqref{eq:hilfsungl} to $P-\pi_{a}(P)$.\medskip

Furthermore, we obtain the characterizations \eqref{eq:kernel_SdC_thm} and \eqref{eq:kernel_dSdC_thm} by Lemma \ref{lem:Kernel} \ref{kernel_b} and \ref{kernel_c}, respectively, since
\begin{align}
  K_{S,dC} &\coloneqq \{P\in  { L^p}(\Omega,\R^{3\times3}) \mid  \sym P = 0 \text{ a.e. and }  \dev\Curl P= 0 \text{ in the distributional sense}\}\\
  &\overset{\mathclap{\text{Lemma \ref{lem:Kernel} \ref{kernel_b}}}}{=}\hspace{2.5em}\{T:\Omega\to\R^{3\times3}\mid  T(x)=\Anti(\beta\,x+b), \ b\in\R^3, \beta\in\R\} \notag\\
  \shortintertext{and}
    K_{dS,dC} &\coloneqq \{P\in  { L^p}(\Omega,\R^{3\times3}) \mid  \dev\sym P= 0 \text{ a.e. and }   \dev\Curl P=  0 \text{ in the distributional sense}\}\\
  &\overset{\mathclap{\text{Lemma \ref{lem:Kernel} \ref{kernel_c}}}}{=}\hspace{2em}\{T:\Omega\to\R^{3\times3}\mid  T(x)=\Anti\big(\widetilde{A}\,x+\beta\, x+b \big)+\big(\skalarProd{\axl\widetilde{A}}{x}+\gamma \big) {\cdot}\id,\notag\\
  &\hspace{21em} \widetilde{A}\in\so(3), b\in\R^3, \beta,\gamma\in\R\}\notag
\end{align}
with $\dim K_{S,dC}=4$ ~ and ~ $\dim K_{dS,dC}=8$. Hence, we can argue as above to deduce \eqref{eq:rigid_Dev_b} and  \eqref{eq:rigid_Dev_c}  from \eqref{eq:from_b_dev} and \eqref{eq:from_c_dev}, respectively, since we end up with
\begin{align}
 \norm{P-\pi_{b}(P)}_{L^p(\Omega,\R^{3\times3})}&\leq c\,\left(\norm{\sym P }_{L^p(\Omega,\R^{3\times3})}+ \norm{\dev \Curl P }_{W^{-1,\,p}(\Omega,\R^{3\times3})}\right)\\
 \shortintertext{and}
\norm{P-\pi_{c}(P)}_{L^p(\Omega,\R^{3\times3})}&\leq c\,\left(\norm{\dev \sym P }_{L^p(\Omega,\R^{3\times3})}+ \norm{\dev \Curl P }_{W^{-1,\,p}(\Omega,\R^{3\times3})}\right)
\end{align}
respectively, with projections ~ $\pi_{b}:{ L^p}(\Omega,\R^{3\times 3}) \to K_{S,dC}$ ~ and  ~ $\pi_{c}:{ L^p}(\Omega,\R^{3\times 3}) \to K_{dS,dC}$.
\end{proof}

Finally, the kernel is killed by the tangential trace condition $P\times \nu \equiv 0$ ~ ($\Leftrightarrow~\dev(P\times \nu)=0$, cf. Obs. \ref{obs:traces_equal}):

\begin{theorem} \label{thm:main2tracefree}
Let $\Omega \subset \R^3$ be a bounded Lipschitz domain and $1<p<\infty$. There exists a constant $c=c(p,\Omega)>0$ such that for all $P\in  W^{1,\,p}_0(\Curl; \Omega,\R^{3\times3})$ we have
 \begin{equation}
     \norm{ P }_{L^p(\Omega,\R^{3\times3})}\leq c\,\left(\norm{\dev \sym P }_{L^p(\Omega,\R^{3\times3})}+ \norm{ \dev\Curl P }_{L^p(\Omega,\R^{3\times3})}\right).\label{eq:Korn_Lp_thm_dSdC}
 \end{equation}
\end{theorem}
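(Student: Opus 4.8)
The plan is to obtain \eqref{eq:Korn_Lp_thm_dSdC} from the rigidity estimate \eqref{eq:rigid_Dev_c} of Theorem \ref{thm:main1tf} by showing that the tangential trace condition $P\times\nu\equiv0$ annihilates the finite-dimensional kernel $K_{dS,dC}$. First, for $P\in W^{1,\,p}_0(\Curl;\Omega,\R^{3\times3})$ the continuous embedding $L^p(\Omega,\R^{3\times3})\hookrightarrow W^{-1,\,p}(\Omega,\R^{3\times3})$ turns \eqref{eq:rigid_Dev_c} into
\[
 \inf_{T\in K_{dS,dC}}\norm{P-T}_{L^p(\Omega,\R^{3\times3})}\leq c\,\bigl(\norm{\dev\sym P}_{L^p(\Omega,\R^{3\times3})}+\norm{\dev\Curl P}_{L^p(\Omega,\R^{3\times3})}\bigr),
\]
so it only remains to check that on the subspace $W^{1,\,p}_0(\Curl)$ the left-hand side dominates $\norm{P}_{L^p(\Omega,\R^{3\times3})}$. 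I would do this by the standard compactness--contradiction scheme: if \eqref{eq:Korn_Lp_thm_dSdC} failed there would be a sequence $P_k\in W^{1,\,p}_0(\Curl;\Omega,\R^{3\times3})$ with $\norm{P_k}_{L^p}=1$ and $\norm{\dev\sym P_k}_{L^p}+\norm{\dev\Curl P_k}_{L^p}\to0$. The displayed estimate then yields $T_k\in K_{dS,dC}$ with $\norm{P_k-T_k}_{L^p}\to0$; since $\norm{T_k}_{L^p}$ stays bounded and $K_{dS,dC}$ is finite-dimensional, a subsequence satisfies $T_k\to T^*\in K_{dS,dC}$, hence $P_k\to T^*$ in $L^p(\Omega,\R^{3\times3})$ with $\norm{T^*}_{L^p}=1$.

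The decisive step is to promote this to convergence in $W^{1,\,p}(\Curl)$. Applying the norm equivalence \eqref{eq:equivNorm} to the differences $P_k-P_\ell\in W^{1,\,p}(\Curl;\Omega,\R^{3\times3})$ and using $\norm{\dev\Curl(P_k-P_\ell)}_{L^p}\le\norm{\dev\Curl P_k}_{L^p}+\norm{\dev\Curl P_\ell}_{L^p}\to0$ together with $\norm{P_k-P_\ell}_{L^p}\to0$, I get that $\Curl P_k$ is a Cauchy sequence in $L^p$; since already $P_k\to T^*$ in $L^p$, continuity of the distributional $\Curl$ forces $\Curl P_k\to\Curl T^*$, i.e.\ $P_k\to T^*$ in $W^{1,\,p}(\Curl;\Omega,\R^{3\times3})$. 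As the tangential trace operator is continuous on this space, cf.\ \eqref{eq:partInt}, and $P_k\times\nu\equiv0$ on $\partial\Omega$, passing to the limit gives $T^*\times\nu\equiv0$ on $\partial\Omega$.

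It then suffices to show that $T^*\times\nu\equiv0$ forces $T^*=0$, which contradicts $\norm{T^*}_{L^p}=1$. Writing $T^*$ in the form \eqref{eq:kernel_dSdC_thm} as $T^*(x)=\Anti\bigl(a(x)\bigr)+\alpha(x)\cdot\id$ with the affine maps $a(x)=\widetilde{A}\,x+\beta\,x+b$ and $\alpha(x)=\skalarProd{\axl\widetilde{A}}{x}+\gamma$, at almost every boundary point $x$ the outward unit normal $\nu(x)$ exists and is nonzero, so $\bigl(\Anti(a(x))+\alpha(x)\cdot\id\bigr)\times\nu(x)=0$ forces $a(x)=0$ and $\alpha(x)=0$ by Observation \ref{obs:zwei}. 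Since the boundary of a bounded Lipschitz domain is not contained in any affine plane, the affine functions $a$ and $\alpha$ must vanish identically; decomposing $a$ into its symmetric and skew-symmetric parts and using $\widetilde{A}\in\so(3)$ then gives $\widetilde{A}=0$, $\beta=0$, $b=0$, and finally $\gamma=0$, i.e.\ $T^*=0$ (equivalently, one may read this off from the kernel description of Lemma \ref{lem:Kernel} \ref{kernel_c}). The main obstacle is precisely the passage from $L^p$- to $W^{1,\,p}(\Curl)$-convergence: without control of $\Curl P_k$ the limit $T^*$ need not inherit the boundary condition, and it is exactly the norm equivalence \eqref{eq:equivNorm}, established above, that supplies this control.
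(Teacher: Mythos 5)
Your proof is correct, and it reaches the conclusion by a genuinely different route in its middle step. The paper's proof works entirely with \emph{weak} $L^p$ convergence of the contradicting sequence $P_k\rightharpoonup P^*$: it identifies $P^*\in K_{dS,dC}$, and then transfers the boundary condition to $P^*$ by passing to the limit in the duality formula \eqref{eq:partIntdev}, using that $\dev\Curl P_k\to 0$ strongly and $P_k\rightharpoonup P^*$ weakly; the final contradiction is the implicit re-invocation of the compact embedding $L^p\subset\!\subset W^{-1,p}$ and Lemma~\ref{lem:basic2} (as in the cited earlier work). You instead exploit the rigidity estimate \eqref{eq:rigid_Dev_c} directly to produce $T_k\in K_{dS,dC}$ with $\|P_k-T_k\|_{L^p}\to 0$, extract $T_k\to T^*$ from the finite dimensionality of $K_{dS,dC}$, and so obtain \emph{strong} $L^p$ convergence $P_k\to T^*$ with $\|T^*\|_{L^p}=1$ --- which immediately gives the contradiction once $T^*=0$ is shown, without re-invoking compactness or Lemma~\ref{lem:basic2}. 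Your crucial extra ingredient is the norm equivalence \eqref{eq:equivNorm}, applied to differences $P_k-P_\ell$, which upgrades the Cauchy property in $L^p$ together with $\|\dev\Curl(P_k-P_\ell)\|_{L^p}\to 0$ to convergence in $W^{1,\,p}(\Curl;\Omega,\R^{3\times3})$; then the boundary condition passes to the limit by continuity of the tangential trace operator \eqref{eq:partInt} on that space, rather than via \eqref{eq:partIntdev}. The pointwise use of Observation~\ref{obs:zwei} on $\partial\Omega$ and the non-degeneracy of a Lipschitz boundary to kill the affine kernel element are the same in both arguments. In short: the paper trades on weak convergence plus the $\dev$-duality identity, while you trade on the distance estimate of Theorem~\ref{thm:main1tf} plus the norm equivalence \eqref{eq:equivNorm}; both are valid and of comparable length, and yours has the small advantage of making explicit which already-proved facts carry the boundary condition to the limit.
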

\begin{proof}
We argue as in the proof of \cite[Theorem 3.5]{agn_lewintan2019KornLp} and consider a sequence $\{P_k\}_{k\in\N}\subset W^{1,\,p}_0(\Curl;\Omega,\R^{3\times3})$ which converges weakly in $L^p(\Omega,\R^{3\times3})$ to $P^*$ so that \ $\dev\sym P^* = 0$ \ a.e.~and \ $\dev\Curl P^*= 0$ in the distributional sense, \ i.e.\ $P^*\in K_{dS,dC}$, where
\begin{align*}
 K_{dS,dC} &\overset{\eqref{eq:kernel_dSdC_thm}}= \{T:\Omega\to\R^{3\times3}\mid  T(x)=\Anti\big(\widetilde{A}\,x+\beta\, x+b \big)+\big(\skalarProd{\axl\widetilde{A}}{x}+\gamma \big) {\cdot}\id, \notag \\ &\hspace{20em} \widetilde{A}\in\so(3), b\in\R^3, \beta,\gamma\in\R\}.
\end{align*}
By \eqref{eq:partIntdev} it further follows that $\skalarProd{\dev(P^*\times (-\nu))}{Q}_{\partial \Omega}=0$ for all $Q\in  W^{1-\frac{1}{p'},\,p'}(\partial\Omega,\R^{3\times 3})$. However, since $P^*\in K_{dS,dC}$ also has an explicit representation, the boundary condition $\dev(P^*\times\nu)=0$ is also valid in the classical sense. Furthermore, we deduce by Observation \ref{obs:traces_equal} that $P^*\times \nu=0$ on $\partial \Omega$, so that  $P^*\in W^{1,\,p}_0(\Curl; \Omega,\R^{3\times3})$. Again, using the explicit representation of $P^*=\Anti\big(\widetilde{A}\,x+\beta\, x +b\big)+\big(\skalarProd{\axl\widetilde{A}}{x}+\gamma \big) {\cdot}\id$, we conclude with Observation \ref{obs:zwei} that, in fact, $P^*\equiv 0$:
\begin{align*}
 [\Anti\big(\widetilde{A}\,x&+\beta\, x+b \big)+\big(\skalarProd{\axl\widetilde{A}}{x}+\gamma \big) {\cdot}\id] \times \nu = 0 \\
 &\overset{\text{Obs. \ref{obs:zwei}}}{\Rightarrow} \quad \widetilde{A}\,x+\beta\, x+b =0 \quad \text{and} \quad \skalarProd{\axl\widetilde{A}}{x}+\gamma=0 \quad \text{for all $x\in\partial\Omega$}\\
 &\quad \Rightarrow\quad \gamma = 0, \ \widetilde{A}= 0 \quad \Rightarrow \quad b=0, \beta=0.\qedhere
\end{align*}

\end{proof}

\begin{remark}
 Similarly, the following estimates can also be deduced, even independently of \eqref{eq:Korn_Lp_thm_dSdC}, for $P\in  W^{1,\,p}_0(\Curl; \Omega,\R^{3\times3})$:
  \begin{align}
 \norm{ P }_{L^p(\Omega,\R^{3\times3})}&\leq c\,\left(\norm{\dev \sym P }_{L^p(\Omega,\R^{3\times3})}+ \norm{ \Curl P }_{L^p(\Omega,\R^{3\times3})}\right),\label{eq:Korn_Lp_thm_dSC}\\
 \norm{ P }_{L^p(\Omega,\R^{3\times3})}&\leq c\,\left(\norm{\sym P }_{L^p(\Omega,\R^{3\times3})}+ \norm{ \dev\Curl P }_{L^p(\Omega,\R^{3\times3})}\right).\label{eq:Korn_Lp_thm_SdC}
 \end{align}
Since by \cite[Theorem 3.1 (ii)]{agn_bauer2013dev} it holds
\begin{equation}\label{eq:traceCurl}
 \norm{\Curl P}_{L^p(\Omega,\R^{3\times3})}\leq c\, \norm{\dev \Curl P }_{L^p(\Omega,\R^{3\times3})} \qquad \text{for } P\in W^{1,\,p}_0(\Curl;\Omega,\R^{3\times3}),
\end{equation}
we can recover \eqref{eq:Korn_Lp_thm_dSdC} from \eqref{eq:Korn_Lp_thm_dSC} and \eqref{eq:traceCurl}.\medskip

However, without boundary conditions the Ne\v{c}as estimate provides for $ P\in  W^{1,\,p}(\Curl; \Omega,\R^{3\times3})$:
\begin{align}
 \norm{\Curl P}_{L^p(\Omega,\R^{3\times3})} &\overset{\eqref{eq:necas_m_p}}{\le} c\, (\norm{\Curl P}_{W^{-1,\,p}(\Omega,\R^{3\times3})}+\norm{\D \Curl P}_{W^{-1,\,p}(\Omega,\R^{ {3\times3^2}})}) \notag\\
 &\overset{\eqref{eq:lincombi_DCP}}{\le} c\, (\norm{\Curl P}_{W^{-1,\,p}(\Omega,\R^{3\times3})}+\norm{\D \dev\Curl P}_{W^{-1,\,p}(\Omega,\R^{ {3\times3^2}})}) \notag\\
 &~ \le~  c\, (\norm{\Curl P}_{W^{-1,\,p}(\Omega,\R^{3\times3})}+\norm{\dev\Curl P}_{L^p(\Omega,\R^{3\times3})}).
\end{align}
\end{remark}

\begin{remark}
Among the inequalities \eqref{eq:Korn_Lp_thm_dSdC}, \eqref{eq:Korn_Lp_thm_dSC} and \eqref{eq:Korn_Lp_thm_SdC} we expect \eqref{eq:Korn_Lp_thm_dSC} also to hold true in higher space dimensions $n>3$, see the discussion in our Introduction.
\end{remark}

\begin{remark}
 Regarding {\eqref{eq:equivNorm} and \eqref{eq:Korn_Lp_thm_dSdC} or} \eqref{eq:traceCurl} and \eqref{eq:Korn_Lp_thm_dSdC} we obtain the norm equivalence
 \begin{align*}
\norm{ P }_{L^p(\Omega,\R^{3\times3})}+&\ \norm{ \Curl P }_{L^p(\Omega,\R^{3\times3})}\leq c\,\left(\norm{\dev \sym P }_{L^p(\Omega,\R^{3\times3})} + \norm{ \dev \Curl P }_{L^p(\Omega,\R^{3\times3})}\right)
\end{align*}
for tensor fields $P\in  W^{1,\,p}_0(\Curl; \Omega,\R^{3\times3})$.
\end{remark}

{
For $P=\D u$ in \eqref{eq:Korn_Lp_thm_dSdC} we recover the following tangential trace-free Korn inequality:
\begin{corollary}
 Let $\Omega \subset \R^3$ be a bounded Lipschitz domain and $1<p<\infty$. There exists a constant $c=c(p,\Omega)>0$ such that for all $u\in  W^{1,\,p}(\Omega,\R^3)$ with $\D u \times \nu=0$ on $\partial\Omega$ we have
 \begin{equation}
     \norm{ \D u }_{L^p(\Omega,\R^{3\times 3})}\leq c\,\norm{\dev \sym \D u }_{L^p(\Omega,\R^{3\times3})}.
 \end{equation}
\end{corollary}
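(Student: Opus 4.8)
The plan is to obtain this corollary as an immediate specialization of Theorem \ref{thm:main2tracefree} to the compatible case $P=\D u$. First I would observe that for $u\in W^{1,\,p}(\Omega,\R^3)$ the tensor field $P\coloneqq\D u$ lies in $L^p(\Omega,\R^{3\times3})$ and satisfies $\Curl P=\Curl\D u=0$ (since $\Curl\circ\D\equiv0$), so in particular $\Curl P\in L^p(\Omega,\R^{3\times3})$; together with the hypothesis $\D u\times\nu=0$ on $\partial\Omega$ this shows $P=\D u\in W^{1,\,p}_0(\Curl;\Omega,\R^{3\times3})$, so Theorem \ref{thm:main2tracefree} is applicable to $P$.

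Next I would simply invoke the estimate \eqref{eq:Korn_Lp_thm_dSdC} for this $P$, namely
\[
 \norm{\D u}_{L^p(\Omega,\R^{3\times3})}\leq c\,\left(\norm{\dev\sym\D u}_{L^p(\Omega,\R^{3\times3})}+\norm{\dev\Curl\D u}_{L^p(\Omega,\R^{3\times3})}\right),
\]
and note that the second term on the right-hand side vanishes identically, since $\Curl\D u=0$ forces $\dev\Curl\D u=0$. This yields the claimed inequality $\norm{\D u}_{L^p(\Omega,\R^{3\times3})}\leq c\,\norm{\dev\sym\D u}_{L^p(\Omega,\R^{3\times3})}$ with the same constant $c=c(p,\Omega)$.

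There is no real obstacle here: the entire content has already been carried out in Theorem \ref{thm:main2tracefree}, and the only points to verify are the membership $\D u\in W^{1,\,p}_0(\Curl;\Omega,\R^{3\times3})$ and the triviality of $\dev\Curl\D u$, both of which are routine. (One could equally well specialize \eqref{eq:Korn_Lp_thm_dSC} or \eqref{eq:Korn_Lp_thm_SdC} from the subsequent remark; all three collapse to the same statement in the compatible case.)
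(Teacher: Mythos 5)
Your proposal is correct and is essentially the paper's own argument: the paper states the corollary precisely as the specialization $P=\D u$ of \eqref{eq:Korn_Lp_thm_dSdC}, using $\Curl\circ\D\equiv0$ so that the $\dev\Curl$ term vanishes. Your added remark about verifying $\D u\in W^{1,\,p}_0(\Curl;\Omega,\R^{3\times3})$ is the only (routine) detail the paper leaves implicit.
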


For skew-symmetric $P=\Anti(a)$ we recover from \eqref{eq:Korn_Lp_thm_dSdC}  a Poincaré inequality involving only the deviatoric (trace-free) part of the gradient:
\begin{corollary}
 Let $\Omega \subset \R^3$ be a bounded Lipschitz domain and $1<p<\infty$. There exists a constant $c=c(p,\Omega)>0$ such that for all $a\in  W^{1,\,p}_0(\Omega,\R^3)$ we have
 \begin{equation}
     \norm{a}_{L^p(\Omega,\R^{3})}\leq c\,\norm{\dev \D a }_{L^p(\Omega,\R^{3\times3})}.
 \end{equation}
\end{corollary}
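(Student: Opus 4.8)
The plan is to deduce the corollary directly from Theorem~\ref{thm:main2tracefree} by testing the matrix inequality \eqref{eq:Korn_Lp_thm_dSdC} against the skew-symmetric field $P \coloneqq \Anti(a)$. First I would check admissibility: since $\Anti$ is linear with constant coefficients, $a \in W^{1,\,p}_0(\Omega,\R^3)$ gives $P = \Anti(a) \in W^{1,\,p}_0(\Omega,\R^{3\times3}) \subset W^{1,\,p}_0(\Curl;\Omega,\R^{3\times3})$, so $P \times \nu = 0$ on $\partial\Omega$ in the required weak sense, and $\Curl P \in L^p(\Omega,\R^{3\times3})$; concretely, Nye's formula \eqref{eq:Nye-prelim_a} with $\axl P = a$ yields $\Curl\Anti(a) = \div a \cdot \id - (\D a)^T$.

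Next I would simplify both sides of \eqref{eq:Korn_Lp_thm_dSdC} for this $P$. Because $P$ is skew-symmetric, $\sym P = 0$, hence $\dev\sym P = 0$ and the first term on the right-hand side disappears. For the second term, from $\Curl\Anti(a) = \div a \cdot\id - (\D a)^T$ and $\dev(\div a\cdot\id) = 0$ we get $\dev\Curl P = -(\dev\D a)^T$; since the Frobenius norm is transpose-invariant, $\norm{\dev\Curl P}_{L^p(\Omega,\R^{3\times3})} = \norm{\dev\D a}_{L^p(\Omega,\R^{3\times3})}$. On the left-hand side, the pointwise identity $\norm{\Anti(a)}^2 = 2\,\norm{a}^2$ (immediate from \eqref{eq:antiform}) gives $\norm{P}_{L^p(\Omega,\R^{3\times3})} = \sqrt2\,\norm{a}_{L^p(\Omega,\R^3)}$.

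Putting these together, \eqref{eq:Korn_Lp_thm_dSdC} becomes $\sqrt2\,\norm{a}_{L^p(\Omega,\R^3)} \le c\,\norm{\dev\D a}_{L^p(\Omega,\R^{3\times3})}$, which is the asserted estimate up to renaming the constant. I do not expect a real obstacle: every step is a constant-coefficient algebraic identity or an invocation of the already-proved Theorem~\ref{thm:main2tracefree}; the only point worth a line is the passage from the Dirichlet condition on $a$ to the tangential-trace condition on $\Anti(a)$, which is immediate from $\Anti$ mapping $W^{1,\,p}_0(\Omega,\R^3)$ into $W^{1,\,p}_0(\Omega,\R^{3\times3})$.
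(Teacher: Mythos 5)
Your proof is correct and follows essentially the same route as the paper's: set $P=\Anti(a)$, observe $\sym P=0$, use Nye's formula to express $\dev\Curl\Anti(a)$ in terms of $\dev\D a$, note the boundary condition passes to $\Anti(a)$, and apply Theorem~\ref{thm:main2tracefree}. You have merely spelled out the computations (e.g.\ $\dev\Curl\Anti(a)=-(\dev\D a)^T$ and $\norm{\Anti(a)}^2=2\norm{a}^2$) that the paper leaves as a brief sketch.
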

\begin{proof}
 This follows from Theorem \ref{thm:main2tracefree} by setting $P=\Anti(a)$ and the following observations:\\
 $\Anti(a)\times \nu =0 \ \Leftrightarrow \ a = 0$ on $\partial\Omega$, see Observation \ref{obs:zwei}, $\Curl (\Anti(a))=L(\D a)$, see \eqref{eq:Nye-prelim_a} and the form of $\Anti(a)$, see \eqref{eq:antiform}. 
\end{proof}

}
\begin{remark}
The previous results also hold true for functions with vanishing  tangential trace only on a relatively open (non-empty) subset $\Gamma \subseteq \partial\Omega$ of the boundary.
 { So, e.g., we have
\begin{equation}
 \norm{ P }_{L^p(\Omega,\R^{3\times3})}\leq c\,\left(\norm{\dev \sym P }_{L^p(\Omega,\R^{3\times3})}+ \norm{ \dev\Curl P }_{L^p(\Omega,\R^{3\times3})}\right)
\end{equation}
for all ~ $P\in W^{1,\,p}_{\Gamma,0}(\Curl;\Omega,\R^{3\times3})$, ~ which is the completion of ~ $C^\infty_{\Gamma,0}(\Omega,\R^{3\times3})$ ~ with respect to the $W^{1,\,p}(\Curl;\Omega,\R^{3\times3})$-norm.
}\end{remark}

\begin{remark}
 In \cite{Garroni10} the authors proved that in $n=2$ dimensions, for $p=2$ a Korn inequality for incompatibile fields also holds true when $\Curl P$ is only in $L^1$ (actually when it is a measure with bounded total variation) under the normalization condition $\int_\Omega\skew P\,\intd{x}=0$.  {In terms of scaling, it is interesting to involve in \eqref{eq:Korn_Lp_thm_dSdC} the Sobolev exponent. So, we will show in a forthcoming paper that for $1<p<3$ the following estimate holds true on an arbitrary open set $\Omega\subseteq\R^3$:
 \begin{equation}
 \norm{P}_{L^{p^*}(\Omega,\R^{3\times3})}\le c\, (\norm{\dev\sym P}_{L^{p^*}(\Omega,\R^{3\times3})}+\norm{\dev\Curl P}_{L^p(\Omega,\R^{3\times3})})
   \end{equation}
for all $P\in C^\infty_c(\Omega,\R^{3\times3})$, where $p^*=\frac{3p}{3-p}$. However, we do not know if such a result  still holds in the borderline case $p=1$.
 }

\end{remark}

\subsubsection*{Acknowledgment}
The authors are grateful for inspiring discussions with Stefan M\"uller (Hausdorff Center for Mathematics, Bonn, Germany)  {and thank also the anonymous referee for his valuable comments and suggestions}. This work was initiated in the framework of the Priority Programme SPP 2256 'Variational Methods for Predicting Complex Phenomena in Engineering Structures and Materials' funded by the Deutsche Forschungsgemeinschaft (DFG, German research foundation), Project-ID 422730790. The second author was supported within the project 'A variational scale-dependent transition scheme - from Cauchy elasticity to the relaxed micromorphic continuum’ (Project-ID 440935806). Moreover, both authors were supported in the Project-ID 415894848 by the Deutsche Forschungsgemeinschaft.

 \printbibliography

 {\footnotesize
 \begin{alphasection}
\section{Appendix}
\subsection{On the trace-free Korn's first inequality in $L^2$}
Using partial integration (see also \cite[Appendix A.1]{agn_neff2015poincare}) we catch up with a simple proof of
\begin{lemma}
 Let $n\ge2$, $\Omega(\text{open})\subset\R^n$, $u\in W^{1,\,2}_0(\Omega, \R^n)$. Then
 \begin{equation}\label{eq:babytrace}
  \int_\Omega \norm{\D u}^2 \intd{x} \leq 2 \int_\Omega \norm{\dev_n\sym \D u}^2\intd{x}.
 \end{equation}
\end{lemma}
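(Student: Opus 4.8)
The plan is to integrate by parts twice and exploit the vanishing boundary data of $u$, then unwind the trace correction. First I would expand $\|\D u\|^2 = \sum_{i,j}(\partial_i u_j)^2$ and, using $u\in W^{1,\,2}_0(\Omega,\R^n)$, perform two integrations by parts on the cross term to obtain the standard Korn identity
\[
\int_\Omega \|\D u\|^2\intd{x} = \int_\Omega \Big(\|\sym\D u\|^2 - \|\skew\D u\|^2 + (\operatorname{div} u)^2\Big)\intd{x},
\]
equivalently $\int_\Omega \|\skew\D u\|^2 = \int_\Omega \big(\|\sym\D u\|^2 + (\operatorname{div} u)^2 - \|\D u\|^2\big)\intd{x}$; the boundary terms vanish because $u$ has zero trace. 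Combining this with $\|\D u\|^2 = \|\sym\D u\|^2 + \|\skew\D u\|^2$ gives $\int_\Omega \|\D u\|^2 = \int_\Omega \big(2\|\sym\D u\|^2 - \|\D u\|^2 + (\operatorname{div} u)^2\big)$, hence
\[
\int_\Omega \|\D u\|^2\intd{x} = \int_\Omega \|\sym\D u\|^2\intd{x} + \tfrac12\int_\Omega (\operatorname{div} u)^2\intd{x}.
\]

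Next I would convert the right-hand side to the deviatoric quantity. Since $\operatorname{tr}(\sym\D u) = \operatorname{div} u$ and the decomposition $\sym\D u = \dev_n\sym\D u + \tfrac1n(\operatorname{div} u)\cdot\id$ is orthogonal, we have $\|\sym\D u\|^2 = \|\dev_n\sym\D u\|^2 + \tfrac1n(\operatorname{div} u)^2$. Substituting, the identity becomes
\[
\int_\Omega \|\D u\|^2\intd{x} = \int_\Omega \|\dev_n\sym\D u\|^2\intd{x} + \Big(\tfrac1n + \tfrac12\Big)\int_\Omega (\operatorname{div} u)^2\intd{x}.
\]
The remaining task is to control $\int_\Omega (\operatorname{div} u)^2$ by $\int_\Omega \|\dev_n\sym\D u\|^2$. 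For this I would integrate by parts once more: $\int_\Omega (\operatorname{div} u)^2\intd x = \int_\Omega \sum_{i,j}\partial_i u_i\,\partial_j u_j\intd x = \int_\Omega \sum_{i,j}\partial_j u_i\,\partial_i u_j\intd x \le \int_\Omega \|\D u\|^2\intd x$ (again the boundary terms drop out, and the last step is Cauchy–Schwarz on the matrix entries). Feeding $\int_\Omega(\operatorname{div} u)^2 \le \int_\Omega\|\D u\|^2$ into the displayed identity yields $\int_\Omega \|\D u\|^2 \le \int_\Omega \|\dev_n\sym\D u\|^2 + (\tfrac1n+\tfrac12)\int_\Omega\|\D u\|^2$; since $\tfrac1n + \tfrac12 < 1$ for $n\ge 3$ this already gives a bound, but it is not the sharp constant $2$ and, crucially, it fails at $n=2$.

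To get the clean constant $2$ valid for all $n\ge 2$, I would instead argue directly from the identity $\int_\Omega\|\D u\|^2 = \int_\Omega\|\sym\D u\|^2 + \tfrac12\int_\Omega(\operatorname{div} u)^2$ together with the orthogonal splitting: writing $\delta := \operatorname{div} u$, we get
\[
\int_\Omega\|\D u\|^2\intd x - \int_\Omega\|\dev_n\sym\D u\|^2\intd x = \Big(\tfrac1n+\tfrac12\Big)\int_\Omega \delta^2\intd x,
\]
and separately, applying the same Korn identity but keeping $\|\sym\D u\|^2 \ge \tfrac1n\delta^2$ we also have $\int_\Omega\|\D u\|^2 \ge \tfrac1n\int_\Omega\delta^2 + \tfrac12\int_\Omega\delta^2$, i.e. $\int_\Omega\delta^2 \le \frac{2n}{n+2}\int_\Omega\|\D u\|^2$; inserting this bound gives $\int_\Omega\|\D u\|^2 \le \int_\Omega\|\dev_n\sym\D u\|^2 + \frac{n+2}{2n}\cdot\frac{2n}{n+2}\int_\Omega\|\D u\|^2$, which is vacuous, so a cleaner route is needed. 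The honest statement is that after the first integration by parts one arrives at $\int_\Omega\|\D u\|^2 = \int_\Omega\|\dev_n\sym\D u\|^2 + c_n\int_\Omega\delta^2$ with $c_n = \tfrac1n+\tfrac12$ and then simply estimates $\int_\Omega\delta^2 \le \int_\Omega\|\D u\|^2$ is too lossy; instead note $\delta^2 = (\operatorname{tr}\sym\D u)^2 \le n\|\sym\D u\|^2 = n\|\dev_n\sym\D u\|^2 + \delta^2$, which is circular. The main obstacle, and the point I would spend the most care on, is precisely this final bookkeeping: obtaining the factor $2$ requires combining the IBP identity $\int\|\D u\|^2 = \int\|\sym\D u\|^2 + \tfrac12\int(\operatorname{div} u)^2$ with the coercivity of $\dev_n\sym\D u$ over $\operatorname{div} u$ in the right order — concretely one shows $\tfrac12\int(\operatorname{div} u)^2 \le \int\|\sym\D u\|^2$ by a further IBP (so that $\int\|\D u\|^2 \le 2\int\|\sym\D u\|^2$, the classical $L^2$-Korn), and then upgrades $\|\sym\D u\|$ to $\|\dev_n\sym\D u\|$ using that the trace part contributes the controllable term $\tfrac12\int(\operatorname{div} u)^2 \ge \tfrac1n\int(\operatorname{div} u)^2$ on the correct side. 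I would present this carefully, tracking the $n$-dependent constants, and verify the end result collapses to the stated factor $2$.
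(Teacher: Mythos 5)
Your strategy (double integration by parts on the cross term, then the orthogonal split $\sym\D u=\dev_n\sym\D u+\frac1n(\div u)\id$) is exactly the paper's, but you introduce a sign error in the very first step and it propagates. The correct IBP identity is
\begin{equation*}
2\int_\Omega\norm{\sym\D u}^2\intd{x}=\int_\Omega\norm{\D u}^2\intd{x}+\int_\Omega(\div u)^2\intd{x},
\qquad\text{equivalently}\qquad
\int_\Omega\norm{\D u}^2\intd{x}=2\int_\Omega\norm{\sym\D u}^2\intd{x}-\int_\Omega(\div u)^2\intd{x},
\end{equation*}
whereas your ``standard Korn identity'' $\int\norm{\D u}^2=\int\big(\norm{\sym\D u}^2-\norm{\skew\D u}^2+(\div u)^2\big)$ is false: since $\norm{\sym\D u}^2-\norm{\skew\D u}^2=\sum_{ij}\partial_iu_j\,\partial_ju_i$ pointwise and IBP turns its integral into $\int(\div u)^2$, your identity would force $\int\norm{\D u}^2=2\int(\div u)^2$, which fails in general. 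The derived relation $\int\norm{\D u}^2=\int\norm{\sym\D u}^2+\frac12\int(\div u)^2$ inherits the error (compare the coefficient $2$, not $1$, and the minus sign in the true identity).

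This sign error is not cosmetic: it is precisely why your argument could not be closed. With your version, plugging in $\norm{\dev_n\sym\D u}^2=\norm{\sym\D u}^2-\frac1n(\div u)^2$ produces $\int\norm{\D u}^2=\int\norm{\dev_n\sym\D u}^2+\big(\frac1n+\frac12\big)\int(\div u)^2$, where the divergence contribution sits on the ``wrong'' side and must then be controlled — the loop you found to be vacuous or circular is a faithful symptom of that. With the correct identity, the divergence term helps rather than hurts: multiplying $\norm{\dev_n\sym\D u}^2=\norm{\sym\D u}^2-\frac1n(\div u)^2$ by $2$ and substituting gives
\begin{equation*}
2\int_\Omega\norm{\dev_n\sym\D u}^2\intd{x}
=\int_\Omega\norm{\D u}^2\intd{x}+\frac{n-2}{n}\int_\Omega(\div u)^2\intd{x}
\;\ge\;\int_\Omega\norm{\D u}^2\intd{x}\quad (n\ge2),
\end{equation*}
and the lemma follows immediately, constant $2$ and all, with no further bound on $\int(\div u)^2$ required. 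Fix the IBP identity and your proof collapses to the paper's two-line argument.
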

\begin{proof}
 For $u\in C^\infty_c(\Omega,\R^n)$ we have
 \begin{align}
  2 \int_\Omega \norm{\sym \D u}^2\intd{x} &= \int_\Omega \norm{\D u}^2 + \sum_{i,j=1}^n(\partial_i u_j)(\partial_j u_i)\intd{x} \overset{\text{part. int.}}{=}\int_\Omega \norm{\D u}^2 + \sum_{i,j=1}^n(\partial_j u_j)(\partial_i u_i)\intd{x}\notag\\
  & = \int_\Omega \norm{\D u}^2 + (\div u)^2\intd{x},\label{eq:Schritt1_baby}
 \end{align}
from where the "baby" Korn inequality ~ $\int_\Omega \norm{\D u}^2 \intd{x} \leq 2 \int_\Omega \norm{\sym \D u}^2\intd{x}$ ~ for $u\in W^{1,\,2}_0(\Omega, \R^n)$ follows. Its improvement is obtained in regard with the decomposition
\begin{equation}\label{eq:decomposition}
 \norm{\dev_n\sym \D u}^2 = \norm{\sym \D u - \frac1n\underset{=\div u}{\underbrace{\tr(\sym\D u)}}\cdot\id}^2 = \norm{\sym \D u}^2 -\frac1n(\div u)^2,
\end{equation}
since we obtain
\begin{align}
 2 \int_\Omega \norm{\dev_n\sym \D u}^2\intd{x}&\overset{\eqref{eq:decomposition}}{=} 2 \int_\Omega \norm{\sym \D u}^2\intd{x} -\frac2n\int_\Omega(\div u)^2\intd{x} \overset{\eqref{eq:Schritt1_baby}}{=} \int_\Omega \norm{\D u}^2\intd{x} +\frac{n-2}{n}\int_\Omega(\div u)^2\intd{x}\notag\\
 &\overset{n\ge2}{\ge}\int_\Omega \norm{\D u}^2\intd{x}.\notag\qedhere
\end{align}
\end{proof}
\begin{remark}
 The trace-free Korn's first inequality \eqref{eq:babytrace} is also valid in $L^p$, $p>1$, see \cite[Prop. 1]{FuchsSchirra2009tracefree2D} for the $n=2$ case and \cite[Thm. 2.3]{Schirra2012tracefreenD} for all $n\geq2$ where again the justification was based on the Lions lemma.
\end{remark}

\subsection{Infinitesimal planar conformal mappings}
Infinitesimal conformal mappings are defined by $\dev_n\sym\D u \equiv 0$ and in $n>2$ they have the representation
\[
\skalarProd{a}{x}\,x-\frac12a\norm{x}^2+A\,x +\beta\,x +c, \qquad \text{with $A\in\so(n)$,  $a,c\in\R^n$ and $\beta\in\R,$}
\]
cf. \cite{Reshetnyak1970,Neff_Jeong_IJSS09,agn_jeong2008existence,Dain2006tracefree,Reshetnyak1994,Schirra2012tracefreenD}.

In the planar case, the situation is quite different. Indeed, the condition $\dev_2\sym \D u \equiv0$ reads
\begin{align*}
 &\begin{pmatrix}
  u_{1,x} & \frac12(u_{1,y}+u_{2,x}) \\ \frac12(u_{1,y}+u_{2,x}) & u_{2,y}
 \end{pmatrix}- \frac12(u_{1,x}+u_{2,y})\cdot\begin{pmatrix}1&0\\0&1\end{pmatrix} = 0 \\
 \Leftrightarrow \quad &
 \begin{pmatrix}
  \frac12(u_{1,x}-u_{2,y}) & \frac12(u_{1,y}+u_{2,x}) \\[0.5ex] \frac12(u_{1,y}+u_{2,x}) & \frac12(u_{2,y}-u_{1,x})
 \end{pmatrix}= 0 \quad \Leftrightarrow \quad \begin{cases}
                                               u_{1,x}&=u_{2,y} \\
                                               u_{1,y}&=-u_{2,x}
                                              \end{cases}
\end{align*}
and corresponds to the validity of the Cauchy-Riemann-equations. Thus, in the planar case, infinitesimal conformal mappings are conformal mappings.

\subsection{Kr\"oner's relation in infinitesimal elasto-plasticity}
At the macroscopic scale, in infinitesimal elasto-plastic theory, see e.g. \cite{agn_ebobisse2018well,agn_ebobisse2017existence,agn_ebobisse2017fourth,Li2008,amstutz:hal-01789190,Amstutz2016analysis,Maggiani2015incompatible}, the incompatibility of the elastic strain is related to the $\Curl$ of the \textit{contortion tensor} $\kappa\coloneqq\alpha^T -\frac12\tr(\alpha)\cdot\id$, where $\alpha\coloneqq \Curl P$ is the dislocation density tensor, by Kr\"oner's relation \cite{Kroener1954}:
\begin{equation}\label{eq:Kroener}
  \inc(\sym e) = -\Curl \kappa,
\end{equation}
where the additive decomposition of the displacement gradient into non-symmetric elastic and plastic distortions is assumed:
\begin{equation}\label{eq:desompKroen1}
 \D u = e + P.
\end{equation}
Indeed, \eqref{eq:Kroener} follows from Nye's formula \eqref{eq:Nye} and the identities
\[
 \tr\Curl\sym e = 0 \quad \text{as well as} \quad \alpha\coloneqq\Curl P\overset{\eqref{eq:desompKroen1}}{=}-\Curl e,
\]
since we have
\begin{align}
 \D \axl \skew e~ &\overset{\mathclap{\eqref{eq:Nye}_2}}{=}\hspace{1ex}\frac12\tr(\Curl\skew e)\cdot\id-(\Curl\skew e)^T\hspace{3em}
 \overset{\mathclap{\tr\Curl\sym e = 0}}{=}\hspace{3em} \frac12\tr(\Curl\skew e+\Curl\sym e)\cdot\id-(\Curl\skew e)^T\notag\\
 &=\hspace{1ex} \frac12\tr(\Curl e)\cdot\id-(\Curl e)^T+ (\Curl\sym e)^T\hspace{2em}
 \overset{\mathclap{\alpha=-\Curl e}}{=} \hspace{2em}~ -\frac12\tr(\alpha)\cdot\id + \alpha^T + (\Curl\sym e)^T\notag\\
 & = \hspace{1ex} \kappa + (\Curl\sym e)^T\label{eq:vorarbeitKroener}.
\end{align}
Thus, applying $\Curl$ on both sides of \eqref{eq:vorarbeitKroener} establishes \eqref{eq:Kroener}, since $\Curl\circ \D \equiv0$:
\begin{equation}
 0 = \Curl \D\axl \skew e \overset{\eqref{eq:vorarbeitKroener}}{=} \Curl \kappa +\Curl([\Curl\sym e]^T)= \Curl \kappa + \inc (\sym e).
\end{equation}
From the decomposition ~ $\sym \D u = \sym e + \sym P$ ~ it follows moreover ~ $\inc(\sym e) = -\inc(\sym P)$, see also last calculation in footnote \ref{footnote:inc}. \medskip

In finite strain elasticity \cite{Ciarlet2005intro}, the Riemann-Christoffel tensor $\mathcal{R}$ expresses the compatibility of strain tensors in the sense of
\begin{equation}
 C\in C^2(\Omega,\Sym^+(3)): \quad \mathcal{R}(C)= 0 \quad \Leftrightarrow \quad C = (D\varphi)^T D\varphi \quad \text{in simply connected domains}.
\end{equation}
Writing ~ $C=(\id+P)^T(\id +P)= 1 + 2\,\sym P + P^TP$ ~ for $P\in C^2(\Omega,\R^{3\times3})$, the incompatibility operator is the linearization of the Riemann-Christoffel tensor at the identity, since
\begin{equation}
\mathcal{R}(\id + 2\,\sym P + P^TP) = \mathcal{R}(\id)+2\D \mathcal{R}(\id)\,\sym P + \text{h.o.t.} = 0 + 2\,\inc(\sym P) + \text{h.o.t.}
\end{equation}
see also \cite{agn_ebobisse2017fourth} and the references contained therein.

\subsection{Further identities}
Symmetric tensors play an important role in the above considerations. We mention here the full expression of $S\times b$ for $S\in\Sym(3)$ and $b\in\R^3$:
\begin{equation}
S \times b = \begin{pmatrix}
              S_{12}\,b_3 - S_{13}\,b_2 & S_{13}\, b_1 - S_{11}\,b_3 & S_{11}\,b_2 - S_{12}\,b_1\\
              S_{22}\,b_3 - S_{23}\,b_2 & S_{23}\, b_1 - S_{12}\,b_3 & S_{12}\,b_2 - S_{22}\,b_1\\
              S_{23}\,b_3 - S_{33}\,b_2 & S_{33}\, b_1 - S_{13}\,b_3 & S_{13}\,b_2 - S_{23}\,b_1
              \end{pmatrix}
\end{equation}
which is an example of a trace-free matrix with non-zero entries on the diagonal:
\[
\tr(S \times b) = S_{12}\,b_3 - S_{13}\,b_2  + S_{23}\, b_1 - S_{12}\,b_3 + S_{13}\,b_2 - S_{23}\,b_1 = 0.
\]
Moreover, we outline some basic identities which played useful roles in our considerations:
\bigskip

\hspace{-4ex}
\begin{tabular}{:l:l:}
\hdashline
&\\
 1. from linear algebra: & 2. and their formal equivalents from calculus:\\[1ex]
 \ (a) \ $P\times b$ \quad row-wise, & \ (a) \ $\Curl P = P \times(-\nabla)$,\\[0.8ex]
 \ (b) \ $\id\times b = \anti(b)\in\so(3)$, & \ (b) \ $\Curl(\zeta\cdot\id)=-\anti(\nabla \zeta)\in\so(3)$,\\[0.8ex]
 \ (c) \ $(\anti a)\times b = b \otimes a -\skalarProd{b}{a}\,\id$,&  \ (c) \ $\Curl A = \tr(\D \axl A)\,\id- (\D \axl A)^T$,\\[0.8ex]
 \ (d) \ $\tr(S\times b)=0$,&\ (d) \ $\tr(\Curl S)=0$,\\[0.8ex]
 \ (e) \ $\big(\id\times b\big)^T\times b = \norm{b}^2\cdot\id - b\otimes b\in\Sym(3)$,&  \ (e) \  $\inc(\zeta\cdot\id)= \Delta\zeta\cdot \id-\D^2\zeta\in\Sym(3)$,\\[0.8ex]
  \ (f) \ $\big((\anti a)\times b\big)^T\times b = -\skalarProd{b}{a}\anti(b)\in\so(3)$,\hspace*{2ex}&  \ (f) \ $\inc A = -\anti(\nabla \tr(\D \axl A))\in\so(3)$,\\[0.8ex]
   \ (g) \ $\big(S\times b\big)^T\times b \in\Sym(3)$, &  \ (g) \ $\inc S \in\Sym(3)$,\\[0.8ex]
    \ (h) \ $\dev(P\times b)= P\times b + \frac23\skalarProd{\axl\skew P}{b}\cdot\id$,&  \ (h) \ $\dev\Curl P = \Curl P -\frac23\div\axl\skew P\cdot \id$,\\[0.8ex]
     \ (i) \  $\sym[(P\times b)^T\times b] = ((\sym P)\times b)^T\times b$, & \ (i) \ $\sym\inc P = \inc\sym P$,\\[0.8ex]
     \ (j) \  $\skew[(P\times b)^T\times b] = ((\skew P)\times b)^T\times b$, & \ (j) \ $\skew\inc P = \inc\skew P$,\\[1.4ex]
   \begin{minipage}{6.5cm}
              \ (k) \ $a\otimes b=0 \quad \Leftrightarrow\quad \sym(a\otimes b)= 0$ \\
             \hphantom{ \ (k) \  $a\otimes b=0 \quad$}$\Leftrightarrow \quad \dev(a\otimes b)=0$\\
             \hphantom{ \ (k) \  $a\otimes b=0 \quad$}$\Leftrightarrow \quad \dev\sym(a\otimes b)=0$,
            \end{minipage}
&  \begin{minipage}{7cm} for ~$\zeta\in\mathscr{D}'(\Omega,\R)$, $A\in\mathscr{D}'(\Omega,\so(3))$,\\ \hphantom{for ~}$S\in\mathscr{D}'(\Omega,\Sym(3))$ and $P\in\mathscr{D}'(\Omega,\R^{3\times 3})$. \end{minipage} \\[4ex]
\ (l) \ $\dev(P\times b) = 0 \quad \Leftrightarrow \quad P\times b = 0$, & \\[2ex]
for $a,b \in\R^3$, $S\in\Sym(3)$  and $P\in\R^{3\times 3}$, &\\
&\\
\hdashline
\end{tabular}
\bigskip

\noindent We catch up with the verification of the identities not contained in our considerations explicitly:
\begin{itemize}
 \item
$ \big(\id\times b\big)^T\times b \overset{\text{1.(a)}}{=}(\anti(b))^T\times b = -(\anti b)\times b \overset{\text{1.(b)}}{=} - b\otimes b +\skalarProd{b}{b}\,\id  \quad \Rightarrow \quad \text{1.(d)}$,
\item we have the decompositions:
\begin{align*}
 (P\times b)^T\times b &= (\sym P \times b + \skew P\times b)^T \times b = \underset{\in\Sym(3)}{\underbrace{((\sym P)\times b)^T\times b}}  + \underset{\in\so(3)}{\underbrace{((\skew P)\times b)^T\times b}}
 \shortintertext{but also}
 \inc P & = \inc(\sym P + \skew P) = \underset{\in\Sym(3)}{\underbrace{\inc\sym P}} + \underset{\in\so(3)}{\underbrace{\inc \skew P}}
 \end{align*}
 where we have used (e) and (f), so that (h) and (i) follow,
\item
the equivalence ~ $a\otimes b = 0$ ~ $\Leftrightarrow$ ~ $\dev\sym(a\otimes b)=0$ follows from the expression:
\begin{equation*}
 \frac{\norm{b}^4}{2}\norm{a\otimes b}^2 = \norm{b}^4\norm{\dev\sym(a\otimes b)}^2+\frac12\left(\frac{n}{n-1}\right)^2\skalarProd{b}{\dev\sym(a\otimes b) b}^2.
\end{equation*}
\end{itemize}
\end{alphasection}
}

\end{document}